\newtheorem{thm}{Theorem}
\newtheorem*{thm*}{Theorem}
\newtheorem{lem}[thm]{Lemma}
\newtheorem{prop}[thm]{Proposition}
\newtheorem{rem}[thm]{Remark}
\newtheorem{ex}{Example}
\numberwithin{equation}{section}
\newcommand{\Cpx}{\mathbb{C}}
\newcommand{\tq}{\ |\ }
\newcommand{\sma}{\! <\!}
\newcommand{\weyl}{\mathcal{W}}
\newcommand{\ttau}{\boldsymbol{\tau}}
\newcommand{\kk}{\mathbf{k}}
\newcommand{\pp}{\mathbf{p}}
\newcommand{\qq}{\mathbf{q}}
\newcommand{\llr}{\Lambda}
\newcommand{\cnr}{\mathscr{C}}
\newcommand{\extDw}{D^{+}(w)}
\newcommand{\uness}{\mathscr{U}\! }
\newcommand{\shape}[3]{\ensuremath{\mathbf{#1#2#3}}}
\newcommand{\NE}{\mathscr{N}\!\!e}
\newcommand{\op}{\mathcal{O}p_{\ttau}}
\begin{document}

\title{Theta-vexillary signed permutations}

\author{Jordan Lambert}
\address{Department of Mathematics, Federal University of Juiz de Fora, Juiz de Fora 36036-900, Minas Gerais, Brazil}
\email{jordansilva2008@gmail.com}
\thanks{The author was supported by FAPESP Grant 2013/10467-3 and 2014/27042-8}

\subjclass[2010]{Primary 05A05; Secondary 14M15}

\keywords{Permutations, Schubert varieties}

\begin{abstract}
Theta-vexillary signed permutations are elements in the hyperoctahedral group that index certain classes of degeneracy loci of type B and C. These permutations are described using triples of $s$-tuples of integers subject to specific conditions. The objective of this work is to present different characterizations of theta-vexillary signed permutations, describing them in terms of corners in the Rothe diagram and pattern avoidance.
\end{abstract}

\maketitle

\section{Introduction}

A permutation $w$ is called vexillary if and only if it avoids the patters $[2\ 1\ 4\ 3]$, i.e., there are no indices $a<b<c<d$ such that $w(b)<w(a)<w(d)<w(c)$. Vexillary permutations were found by Lascoux and Sch\"utzenberger \cite{LS} in the 1980s. Fulton \cite{Fu91} in the 1990s obtained other equivalent characterizations for the vexillary permutations: in addition to the pattern avoidance criterion, he figured out one in terms of the essential set of a permutation, among others. Since $S_{n}$ is the Weyl group of type A, the vexillary permutations represent Schubert varieties in some flag manifold where the Lie group is $G=\mathrm{Sl}(n,\Cpx)$.

A few years later, the notion of vexillary permutations in the hyperoctaedral group were introduced by Billy and Lam \cite{BL}. Recently, Anderson and Fulton \cite{AF12, AF14} provided a different characterization for vexillary signed permutations. They defined them through a specific triple of integers: given three $s$-tuple of positive integers $\ttau=(\kk,\pp,\qq)$, where $\kk=(0<k_{1}<\cdots< k_{s})$, $\pp=(p_{1}\geqslant\cdots \geqslant p_{s}>0)$, and $\qq=(q_{1}\geqslant\cdots \geqslant q_{s}>0)$, satisfying $p_{i}-p_{i+1}+q_{i}-q_{i+1}>k_{i+1}-k_{i}$ for $1\leqslant i \leqslant s-1$, one constructs a signed permutation $w=w(\ttau)$. Since the hyperoctahedral group $\weyl_{n}$ can be included in the group $S_{2n+1}$, a signed permutation $w$ in $\weyl_{n}$ is vexillary if and only if its inclusion $\iota(w)$ in $S_{2n+1}$ is a vexillary permutation as mentioned above. Anderson and Fulton in \cite{AF14} characterize vexillary signed permutations in terms of essential sets, pattern avoidance, and Stanley symmetric functions.

In this work, we present a class of signed permutations called \emph{theta-vexillary signed permutations}.
They are defined using a triple of integers $\ttau=(\kk,\pp,\qq)$ where we allow negative values for $\qq$ and satisfy eight different conditions, which will be called a \emph{theta-triple}. The set of theta-vexillary signed permutations is relevant because it contains all vexillary signed permutations and k-Grassmannian permutations, which are the ones associated to the Grassmannian Schubert varieties of type B and C.

Theta-vexillary signed permutations have an important geometric interpretation in terms of degeneracy loci. For our purpose, it is easier to denote the hyperoctahedral group as the Weyl group of type B.
Consider a vector bundle $V$ of rank $2n+1$ over $X$, equipped with a nondegenerate form and two flags of bundles $E_{\bullet}=(E_{p_{1}}\subset E_{p_{2}}\subset \cdots \subset E_{p_{s}}\subset V)$ and $F_{\bullet}=(F_{q_{1}}\subset F_{q_{2}}\subset \cdots \subset F_{q_{s}}\subset V)$ such that: for $q>0$, the subbundles $F_{q}$ are isotropic, of rank $n+1-q$; for $q<0$, $F_{q}$ is coisotropic, of corank $n+q$; and all the subbundles $E_{p}$ are isotropic, of rank $n+1-p$. The degeneracy locus of $\ttau$ is
$$
\Omega_{\ttau}:=\{ x\in X \tq \dim(E_{p_{i}} \cap F_{q_{i}})\geqslant k_{i}, \mbox{ for } 1\leqslant i\leqslant s \}.
$$

Anderson and Fulton in \cite{AF15} figured out that if the triple $\ttau$ is subjet to certain conditions, the cohomology class $[\Omega_{\ttau}]$ is the multi-theta-polynomial $\Theta_{\lambda(\ttau)}$ whose coefficiens are Chern classes of the vector bundles $E_{p_{i}}$ and $F_{q_{i}}$. The polinomial $\Theta_{\lambda(\ttau)}$ derives from the theta-polynomials defined via raising operators by Bush, Kresch, and Tamvakis \cite{BKT} and inspired the name theta-vexillary signed permutations.

The main result of this work provides two other ways to characterize theta-vexillary permutations.
If a permutation $w$ in the Weyl group $\weyl_{n}$ of type B is represented as a matrix of dots in a $(2n+1)\times n$ array of boxes, the (Rothe) extended diagram is the subset of boxes that remains after striking out the boxes weakly south or east of each dot. The southeast (SE) corners in the extended diagram form the set of corners $\cnr(w)$.
One characterization of theta-vexillary signed permutations is the set of corners $\cnr(w)$ is the disjoint union of the set $\NE(w)$ which is composed by all corners that form a piecewise path that goes to the northeast direction, and the set $\uness(w)$ of unessential corners. We also have a characterization via pattern avoidance.

\begin{thm}\label{thm:main}
Let $w$ be a signed permutation. The following are equivalent:
\begin{enumerate}
\item $w$ is theta-vexillary, i.e., there is a triple $\ttau$ such that $w=w(\ttau)$;

\item the set of corners $\cnr(w)$ is the disjoint union
$$
\cnr(w)=\NE(w)\dot{\cup}\uness(w),
$$

\item $w$ avoids the follow thirteen signed patterns 
$[\overline{1}\ 3\ 2]$, 
$[\overline{2}\ 3\ 1]$, 
$[\overline{3}\ 2\ 1]$, 
$[\overline{3}\ 2\ \overline{1}]$, 
$[2\ 1\ 4\ 3]$, 
$[2\ \overline{3}\ 4\ \overline{1}]$, 
$[\overline{2}\ \overline{3}\ 4\ \overline{1}]$, 
$[3\ \overline{4}\ 1\ \overline{2}]$,
$[3\ \overline{4}\ \overline{1}\ \overline{2}]$,
$[\overline{3}\ \overline{4}\ 1\ \overline{2}]$, 
$[\overline{3}\ \overline{4}\ \overline{1}\ \overline{2}]$, 
$[\overline{4}\ 1\ \overline{2}\ 3]$,
and $[\overline{4}\ \overline{1}\ \overline{2}\ 3]$.
\end{enumerate}
\end{thm}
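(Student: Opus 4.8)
The plan is to prove the two equivalences $(1)\Leftrightarrow(2)$ and $(2)\Leftrightarrow(3)$, treating the corner decomposition in $(2)$ as the natural bridge between the algebraic data of a theta-triple and the local combinatorics of pattern avoidance. Condition $(1)$ is a statement about the global structure of the tuples $\ttau=(\kk,\pp,\qq)$, condition $(3)$ is purely local, and $(2)$ translates between them: the set $\NE(w)$ records the essential data that a triple must encode, while $\uness(w)$ absorbs the corners that carry no constraint. I would therefore regard $(1)\Leftrightarrow(2)$ as the structural core and $(2)\Leftrightarrow(3)$ as a finite but delicate case analysis.

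For $(1)\Rightarrow(2)$, I would start from a theta-triple, form $w=w(\ttau)$, draw its dot-matrix in the $(2n+1)\times n$ array, and compute the extended Rothe diagram $\extDw$ together with its SE corners $\cnr(w)$. The claim to verify is that these corners split as asserted: the essential corners trace out a single northeast-monotone path $\NE(w)$, whose successive corners are governed by the triples $(k_i,p_i,q_i)$, and every remaining corner lies in $\uness(w)$. The eight theta-triple conditions are exactly what forces this path to be northeast-monotone; they generalize the single inequality $p_i-p_{i+1}+q_i-q_{i+1}>k_{i+1}-k_i$ of the vexillary case and likewise prevent consecutive corners from colliding or reversing direction, the coisotropic range $q_i<0$ being accommodated through the inclusion $\iota$ into $S_{2n+1}$, where the barred entries appear. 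For the converse $(2)\Rightarrow(1)$, I would run this construction backwards: read off a candidate triple $(k_i,p_i,q_i)$ from the corners of $\NE(w)$ and check that the theta-triple conditions hold precisely because the path is northeast-monotone, the unessential corners contributing nothing new.

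For $(2)\Leftrightarrow(3)$ I would argue by contrapositive in both directions, first localizing the global decomposition to a statement about a single corner. The key lemma is that the decomposition fails exactly when some essential corner of $\extDw$ fails to lie on the northeast path $\NE(w)$; call such a corner \emph{bad}. One direction is then a finite verification: for each of the thirteen signed patterns I would compute the extended diagram of the corresponding small signed permutation and exhibit a bad corner, so that any $w$ containing one of the patterns violates $(2)$. The reverse direction is the substantive one: assuming $w$ has a bad corner, I would isolate the three or four rows and columns that witness its failure and show that the induced signed subpermutation must be one of the listed patterns. The four length-three patterns account for defects witnessed by only three positions, where the barred entries encode the type-B sign interaction, while the nine length-four patterns capture the remaining collisions, the sign choices running over the type-B possibilities.

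The main obstacle will be the completeness of this reverse direction: proving that the thirteen patterns really do exhaust every way a bad corner can arise, with none missed and none redundant. I expect this to require a careful case analysis organized by the signs and the relative positions of the dots surrounding the bad corner, using the northeast-monotonicity of $\NE(w)$ to pin down which configurations are forbidden. This is also the point where the type-B subtleties — isotropic versus coisotropic flags, the barred entries, and the passage through $S_{2n+1}$ — make the obstruction set genuinely larger than the single pattern $[2\ 1\ 4\ 3]$ of the classical vexillary theory.
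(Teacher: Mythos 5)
Your overall architecture---using the corner decomposition (2) as the bridge between the triple description (1) and pattern avoidance (3)---is the same as the paper's, but two of your key steps have genuine gaps. The first is in $(2)\Rightarrow(1)$ (and its mirror image in $(1)\Rightarrow(2)$): you propose to read the candidate triple directly off the corners of $\NE(w)$ and to verify the eight conditions ``because the path is northeast-monotone.'' This fails as stated. The paper's Proposition \ref{prop:svexcore} shows that when two consecutive NE-path corners share the same column ($p_{i}=p_{i+1}$) in the coisotropic region, condition B2 can hold with \emph{equality}, so the tuple read off from the full NE path is not a theta-triple; one must delete exactly the indices where equality occurs before a valid $\ttau$ emerges. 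Dually, for a theta-vexillary $w$ the NE path is strictly larger than the triple in general: $\NE(w)=\ttau\,\dot{\cup}\,\op(w)$, where $\op(w)$ is the set of optional corners (e.g.\ $(7,2,\overline{3})$ for the permutation of Figure \ref{fig:diagram1}, which lies on the NE path but in no theta-triple). Your plan, which identifies the NE path with the triple corners, cannot see these; establishing where such corners can occur, and that nothing else occurs, is the content of the cross-descent/shape analysis in Proposition \ref{prop:svexcore3}, and it is the technical heart of $(1)\Rightarrow(2)$, not a routine verification.

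The second gap is in your ``finite verification'' half of $(2)\Leftrightarrow(3)$. You propose to compute the extended diagram of each of the thirteen patterns, exhibit a bad corner there, and conclude that any $w$ containing the pattern violates (2). This presupposes that badness of corners is inherited upward under pattern containment---equivalently, that property (2) is closed under passing to patterns. That closure is not automatic and is essentially as strong as the implication you are trying to prove: SE corners are defined by descents at \emph{adjacent} positions together with rank conditions, and neither transfers between a subsequence and the ambient permutation without argument. The paper avoids this issue by arguing inside the diagram of $w$ itself: an occurrence of, say, $[2\ 1\ 4\ 3]$ at positions $a<b<c<d$ forces actual SE corners of $w$ in explicit regions (Figure \ref{fig:patternsavoid2143}), and those corners are then shown to lie outside $\NE(w)\cup\uness(w)$, in that case by analyzing which steps of the construction algorithm could have placed the relevant dots. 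Your reverse direction (a bad corner forces one of the thirteen patterns) does match the paper's quadrant-by-quadrant case analysis in spirit, but without repairing the two gaps above the proof does not close.
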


This theorem is consequence of Propositions \ref{prop:svexcore2} and \ref{prop:patavoid} and it is similar to the vexillary signed permutation's version. It is interesting to notice that, comparing to the vexillary case, we admit some SE corners in the diagram that are not in an ordered northeast path, which we call the unessential corners. Besides, the characterization via signed pattern avoidance for the theta-vexillary permutations has eight patterns in common with those for the vexillary case and $[2\ 1]$ is the unique not present in this list.

Considering the pattern avoidance criterion, the set of theta-vexillary signed permutations form a new class of permutations according to the ``Database of Permutation Pattern Avoidance'' maintained by Tenner \cite{patternDB}.

This work is part of my Ph.D. thesis \cite{Lam18}.

\subsubsection*{Acknowledgments}
I would like to express my very great appreciation to David Anderson for his valuable and constructive suggestions during the development of this work while I was a visiting scholar at The Ohio State University. I also thank to Lonardo Rabelo for comments on a previous manuscript.

\section{Signed permutations in \texorpdfstring{$\weyl_{n}$}{Wn}}

The notation present here is the same used in \cite{AF16}. We also refer \cite[\S 8.1]{BB} for further details.

Consider the permutation of positive and negative integers, where the bar over the number denotes the negative sign, and consider the natural order of them
$$
\dots, \overline{n},\dots, \overline{2},\overline{1},0,1,\dots, n,\dots
$$

A \emph{signed permutation} is a permutation $w$ satisfying that $w(\overline{\imath})=\overline{w(i)}$, for each $i$.
A signed permutation belongs to $\weyl_{n}$ if $w(m)=m$ for all $m>n$; this is a group isomorphic to the hyperoctahedral group, the Weyl group of types $B_{n}$ and $C_{n}$. Since $w(\overline{\imath})=\overline{w(i)}$, we just need the positive positions when writing signed permutation in one-line notation, i.e., a permutation $w\in \weyl_{n}$ is represented by $w(1)\ w(2)\ \cdots\ w(n)$. For example, the full form of the signed permutation $w=\overline{2}\ 1\ \overline{3}$ in $\weyl_{3}$ is $3\ \overline{1}\ 2\ 0\ \overline{2}\ 1\ \overline{3}$, but we can omit the values at the position $\overline{3}, \overline{2}, \overline{1}$ and $0$. The group $\weyl_{n}$ is generated by the \emph{simple transpositions} $s_{0},\dots, s_{n}$, where for $i>0$, right-multiplication by $s_{i}$ exchanges entries in positions $i$ and $i+1$, and right-multiplication by $s_{0}$ replaces $w(1)$ with $\overline{w(1)}$. Every signed permutation $w$ can be written as $w=s_{i_{1}}\cdots s_{i_{\ell}}$ such that $\ell$ is minimal; call the number $\ell=\ell(w)$ the \emph{length} of $w$.  This value counts the number of inversions of $w\in\weyl_{n}$, and it is given by the formula
\begin{align}\label{eq:lengthTipoB}
\ell(w) =  \#\{1\leqslant i< j\leqslant n \tq w(i)>w(j)\} + \#\{1\leqslant i\leqslant j\leqslant n \tq w(-i)>w(j)\}.
\end{align}

The element $w_{\circ}^{(n)}= \overline{1}\ \overline{2}\cdots\overline{n}$ is the longest element in $\weyl_{n}$ and it is called the involution of $\weyl_{n}$. Notice that the involution $w_{\circ}^{(n)}$ has length $n^{2}$.

The group of permutations $\weyl_{n}$ can be embedded in the symmetric group $S_{2n+1}$, considering $S_{2n+1}$ the permutations of $\overline{n},\dots, 0,\dots, n$. Indeed, define the \emph{odd} embedding by $\iota:\weyl_{n}\hookrightarrow S_{2n+1}$ where it sends $w=w(1)\ w(2)\ \cdots\ w(n)$ to the permutation
$$
\overline{w(n)}\ \cdots\ \overline{w(2)}\ \overline{w(1)}\ 0\  w(1)\ w(2)\ \cdots\ w(n)
$$
in $S_{2n+1}$. The embedding $\iota$ will be used when it is necessary to highlight that we need the full permutation of $w$.

There is also a \emph{even} embedding $\iota':\weyl_{n}\hookrightarrow S_{2n}$ defined by omitting the value $w(0)=0$.

Considering the natural inclusions $\weyl_{n}\subset \weyl_{n+1}\subset\cdots$, we get the infinite Weyl group $\weyl_{\infty}=\cup \weyl_{n}$. When the value $n$ is understood or irrelevant, we can consider $w$ as an element of $\weyl_{\infty}$. The odd embeddings are compatible with the corresponding inclusions $S_{2n+1}\subset S_{2n+3}\subset\cdots$.

\subsection{Diagram of a permutation in \texorpdfstring{$S_{2n+1}$}{S(2n+1)}}

Let us consider the specific case where the permutation group is $S_{2n+1}$. It is important to consider this case because we need to do some modification in the notation that will be useful for us.
 
Consider a $(2n+1)\times (2n+1)$ arrays of boxes with rows and columns indexed by integers $[\overline{n},n]=\{\overline{n},\dots, \overline{1},0,1,\dots,n\}$ in matrix style. The \emph{permutation matrix} associated to a permutation $w\in S_{2n+1}$ is obtained by placing dots in positions $(w(i),i)$, for all $\overline{n}\leqslant i\leqslant n$, in the array. Again the \emph{diagram} of $w$ is the collection of boxes that remain after removing those which are (weakly) south and east of a dot in the permutation matrix. Observe that the number of boxes in the diagram is equal to the length of the permutation.

The \emph{rank function} of a permutation $w\in S_{2n+1}$ for a pair $(p,q)$, where $\overline{n}\leqslant p, q\leqslant n$, is the number of dots strictly south and weakly west of the box $(q-1,\overline{p})$ in the permutation matrix of $w$. In other words, it will be defined by
\begin{align*}
r_{w}(p,q) &:=\#\{i\leqslant \overline{p} \tq w(i)\geqslant q\},
\end{align*}
for $\overline{n}\leqslant p, q\leqslant n$.

We say that a box $(a,b)$ is a southeast (SE) corner of the diagram of $w$ if $w$ has a descent at $b$, with $a$ lying in the interval of the jump, and $w^{-1}$ has a descent at $a$, with $b$ lying in the interval of the jump. This can be written as
\begin{align}\label{eq:defconerSn}
\begin{aligned}
w(b)> a &\geqslant w(b+1) \quad \mbox{and}\\
w^{-1}(a)> b &\geqslant w^{-1}(a+1).
\end{aligned}
\end{align}

A \emph{corner position} of $w$ is a pair $(p,q)$ such that the box $(q-1,\overline{p})$ is a southeast (SE) corner of the diagram of $w$. The \emph{set of corners} of $w$ is the set $\cnr(w)$ of triples $(k,p,q)$ such that $(p,q)$ is a corner position and $k=r_{w}(p,q)$.

For example, consider $w=\iota(\overline{2}\ 3\ 1)=\overline{1}\ \overline{3}\ 2\ 0\ \overline{2}\ 3\ 1$. Figure \ref{fig:exdiagram} shows the diagram of $w$. The SE corners $(q-1,\overline{p})$ are highlighted and they are filled with the rank function values $r_{w}(p,q)$. In this case, the set of corners is 
$$
\cnr(w)=\{(1,3,\overline{1}), (1,1,2), (3,0,\overline{1}),(2,\overline{2},2)\}.
$$

\begin{figure}[ht]
	\centering
	\includegraphics[scale=0.8]{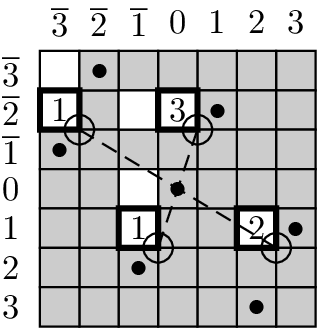}
	\caption[Diagram for $w=\iota(\overline{2}\ 3\ 1)=\overline{1}\ \overline{3}\ 2\ 0\ \overline{2}\ 3\ 1$.]{Diagram for $w=\iota(\overline{2}\ 3\ 1)=\overline{1}\ \overline{3}\ 2\ 0\ \overline{2}\ 3\ 1$. The circle corners connected with dashad lines illustrate the symmetry of Lemma \ref{lema:symmetry}.}
	\label{fig:exdiagram}
\end{figure}

Notice that if a box $(q-1,\overline{p})$ is a SE corner that satisfies \eqref{eq:defconerSn}, then $(p,q)$ is a corner position and $k=r_{w}(p,q)$.

\subsection{Extended diagram of a signed permutation in \texorpdfstring{$\weyl_{n}$}{W(n)}}

We know that signed permutations must satisfy the relation $w(\overline{\imath})=\overline{w(i)}$, then the negative positions can be obtained from the positive ones. Hence, a signed permutation $w\in \weyl_{n}$ corresponds to a $(2n+1)\times n$ array of boxes, with rows indexed by $\{\overline{n},\dots,n\}$ and the columns indexed by $\{\overline{n},\dots,\overline{1}\}$, where the dots are placed in the boxes $(w(i),i)$ for $\overline{n}\leqslant i\leqslant \overline{1}$.

For each dot, we place an ``$\times$'' in those boxes $(a,b)$ such that $a=\overline{w(i)}$ and $i\leqslant b$, in other words, an $\times$ is placed in the same column and opposite along with the boxes to the right of this $\times$.

The \emph{extended diagram} $\extDw$ of a signed permutation $w$ is the collection of boxes in the $(2n+1)\times n$ rectangle that remain after removing those which are south or east of a dot. The \emph{diagram} $D(w)\subseteq \extDw$ is obtained from extended diagram $\extDw$ by removing the ones marked with $\times$. Namely, $D(w)$ is defined by
\begin{align*}
D(w)=\{(i,j)\in [\overline{n},\overline 1]\times[\overline{n},n] \tq w(i)>j, w^{-1}(j)>i \mbox{, and } w^{-1}(-j)>i\}.
\end{align*}

\begin{lem}
The number of boxes of $D(w)$ is equal to the length of $w$.
\end{lem}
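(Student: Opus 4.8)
The plan is to count the boxes of $D(w)$ directly from its defining formula and match the total against the two sums in the length formula~\eqref{eq:lengthTipoB}. I would write a row index as $i=\overline{a}$ with $1\leqslant a\leqslant n$, and use the signed relation $w(\overline{a})=\overline{w(a)}$ throughout. The crucial simplification comes from reparametrizing the column index: since $w^{-1}$ is a bijection of $[\overline{n},n]$, I would replace the sum over $j$ by a sum over $m\in[\overline{n},n]$ with $j=w(m)$. Then $w^{-1}(j)=m$, and the identity $w^{-1}(-w(m))=w^{-1}(w(\overline{m}))=\overline{m}$ turns the two conditions $w^{-1}(j)>i$ and $w^{-1}(-j)>i$ into $m>\overline{a}$ and $\overline{m}>\overline{a}$, that is, $\overline{a}<m<a$; while $w(i)>j$ becomes $w(m)+w(a)<0$. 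This yields
\[
|D(w)| = \sum_{a=1}^{n} \#\{\, m : \overline{a}<m<a \text{ and } w(m)+w(a)<0 \,\}.
\]

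Next I would split the inner count by the sign of $m$. For $m=\overline{c}$ with $1\leqslant c\leqslant a-1$, the relation $w(\overline{c})=\overline{w(c)}$ makes the condition $w(m)+w(a)<0$ equivalent to $w(c)>w(a)$; summing over all $a$ gives exactly $\#\{1\leqslant c<a\leqslant n : w(c)>w(a)\}$, the first sum in~\eqref{eq:lengthTipoB}. For $m=0$, since $w(0)=0$, the condition reduces to $w(a)<0$, contributing $\#\{1\leqslant a\leqslant n : w(a)<0\}$. For $1\leqslant m\leqslant a-1$ the condition is kept as stated, contributing $\#\{1\leqslant m<a\leqslant n : w(m)+w(a)<0\}$.

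Finally I would reconcile the $m\geqslant 0$ pieces with the second sum of~\eqref{eq:lengthTipoB}. Applying $w(\overline{a})=\overline{w(a)}$ once more, the inequality $w(\overline{a})>w(b)$ is the same as $w(a)+w(b)<0$, so that sum equals $\#\{1\leqslant a\leqslant b\leqslant n : w(a)+w(b)<0\}$. Separating its diagonal terms $a=b$ (which give precisely $w(a)<0$) from its strict terms $a<b$ recovers exactly the $m=0$ and $m>0$ contributions found above. Adding the three pieces gives $|D(w)|=\ell(w)$.

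I expect the main difficulty to be bookkeeping rather than ideas: one must confirm that $j=w(m)$ is a genuine bijection of the column index set, insert the identity $w(\overline{m})=\overline{w(m)}$ at the correct places, and — most delicately — observe that the single box arising from $m=0$ is exactly what promotes the strict inequality $a<b$ to the weak inequality $a\leqslant b$ appearing in the second sum of the length formula. Verifying that the three ranges $m<0$, $m=0$, $m>0$ partition $\{\overline{a}<m<a\}$ with no overlap or omission is the one spot where an off-by-one slip could occur.
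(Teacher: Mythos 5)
Your proof is correct and takes essentially the same approach as the paper: the paper splits $D(w)$ into the boxes whose column $j$ satisfies $w^{-1}(j)<0$ and the rest, which is precisely your split by the sign of $m=w^{-1}(j)$, and matches the two pieces against the two sums of \eqref{eq:lengthTipoB}. Your write-up simply makes explicit, via the substitution $j=w(m)$ and the diagonal/off-diagonal bookkeeping for the weak inequality, the cardinality identities that the paper asserts without detail.
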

\begin{proof}
Observe that if we define  $J = \{ j\in [\overline{n},n] \tq w^{-1}(j)<0\}$, the set $D(w)$ can be split into two subsets $D_{1}(w) = \{(i,j) \in D(w) \tq j\in J\}$ and  $D_{2}(w) = \{(i,j) \in D(w) \tq j\not\in J\}$. Since both sets have cardinality $\# D_{1}(w) = \#\{1\leqslant l<m\leqslant n \tq w(l)>w(m)\}$ and $\# D_{2}(w) = \#\{1\leqslant l\leqslant m\leqslant n \tq w(-l)>w(m)\}$, the assertion follows from Equation \eqref{eq:lengthTipoB}.
\end{proof}

Observe that if we use the embedding $\iota:\weyl_{n}\hookrightarrow S_{2n+1}$, the matrix and extended diagram of $w\in \weyl_{n}$ corresponds, respectively, to the first $n$ columns of the matrix and diagram of $\iota(w)$. The notation $\iota(\extDw)$ will be used when we need to use the respective $(2n+1)\times (2n+1)$ diagram of $\iota(w)$.

The rank function of a permutation $w$ in $\weyl_{n}$ is defined by
\begin{align*}
r_{w}(p,q) &=\#\{i\geqslant p \tq w(i)\leqslant \overline{q}\},
\end{align*}
for $1\leqslant p\leqslant n$, and $\overline{n}\leqslant q\leqslant n$.

Since $w(\overline{\imath})=\overline{w(i)}$, then the rank function $r_{w}(p,q)$ is also equal to $\#\{i\leqslant \overline{p} \tq w(i)\geqslant q\}$, so the rank functions $r_{w}$ coincides to $r_{\iota(w)}$.

Given $w\in \weyl_{n}$, the next lemma states that there is a symmetry about the origin of the corner positions corresponding to $\iota(w)$. 
In order to simplify the notation, given a triple $(k,p,q)$, define the \emph{reflected} triple $(k,p,q)^{\perp}=(k+p+q-1,\overline{p}+1,\overline{q}+1)$.

\begin{lem}[\cite{AF16}, Lemma 1.1]\label{lema:symmetry}
For $w\in \weyl_{n}$, the set of corners  of $\iota(w)\in S_{2n+1}$ has the following symmetry: $(k,p,q)$ is in $\cnr(\iota(w))$ if and only if $(k,p,q)^{\perp}$ is in $\cnr(\iota(w))$.
\end{lem}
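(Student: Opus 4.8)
The plan is to exploit the single structural fact defining signed permutations, namely $w(\overline{\imath})=\overline{w(i)}$: viewed as a permutation of $[\overline{n},n]$, this says $\iota(w)$ is \emph{odd}, $w(\overline{\imath})=\overline{w(i)}$, and since $w$ is odd so is its inverse, $w^{-1}(\overline{\jmath})=\overline{w^{-1}(j)}$. Geometrically this means the permutation matrix of $\iota(w)$ is invariant under the point reflection through the origin $(a,b)\mapsto(\overline{a},\overline{b})$. I would first note that $(k,p,q)\mapsto(k,p,q)^{\perp}$ is an involution on triples — applying $\perp$ twice returns $(k+p+q-1)+(1-p)+(1-q)-1=k$ in the first slot and $p,q$ in the others — so it suffices to prove one implication. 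There are then exactly two things to verify: that the reflection preserves the SE-corner condition, and that it sends the rank value $k$ to $k+p+q-1$.

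For the corner-position condition, recall that $(p,q)$ is a corner position precisely when the box $(q-1,\overline{p})$ is a SE corner, i.e.
\[
w(\overline{p})> q-1 \geqslant w(\overline{p}+1) \quad\text{and}\quad w^{-1}(q-1)> \overline{p} \geqslant w^{-1}(q),
\]
while $(\overline{p}+1,\overline{q}+1)$ is a corner position precisely when the box $(\overline{q},p-1)$ is a SE corner, i.e.
\[
w(p-1)> \overline{q} \geqslant w(p) \quad\text{and}\quad w^{-1}(\overline{q})> p-1 \geqslant w^{-1}(\overline{q}+1).
\]
I would rewrite the first pair using $w(\overline{p})=\overline{w(p)}$, $w(\overline{p}+1)=\overline{w(p-1)}$ (since $\overline{p}+1=\overline{p-1}$), $w^{-1}(q)=\overline{w^{-1}(\overline{q})}$, and $w^{-1}(q-1)=\overline{w^{-1}(\overline{q}+1)}$. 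Each of the four inequalities then turns into exactly one inequality of the second pair, the sign change reversing the direction and swapping strict/non-strict between consecutive integers. Hence the two SE-corner conditions are literally identical, so $(p,q)$ is a corner position if and only if $(\overline{p}+1,\overline{q}+1)$ is.

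For the rank value I would argue by counting. Writing $k=r_{w}(p,q)=\#\{i\leqslant\overline{p}\tq w(i)\geqslant q\}$ and using $\overline{\overline{p}+1}=p-1$, we have $r_{w}(\overline{p}+1,\overline{q}+1)=\#\{i\leqslant p-1\tq w(i)\geqslant\overline{q}+1\}$. Substituting $j=\overline{\imath}$ and applying $w(\overline{\jmath})=\overline{w(j)}$ converts this to $\#\{j\geqslant\overline{p}+1\tq w(j)\leqslant q-1\}$. Now the indices $j\geqslant\overline{p}+1$ number $n+p$, and the total number of values $w(j)\geqslant q$ is $n-q+1$, of which $k$ occur among $j\leqslant\overline{p}$; hence $(n-q+1)-k$ of them occur among $j\geqslant\overline{p}+1$, and the complementary count of $j\geqslant\overline{p}+1$ with $w(j)\leqslant q-1$ is $(n+p)-(n-q+1-k)=k+p+q-1$. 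This is the first entry of $(k,p,q)^{\perp}$, and combined with the previous paragraph it gives $(k,p,q)\in\cnr(\iota(w))\iff(k,p,q)^{\perp}\in\cnr(\iota(w))$.

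I expect the only delicate point to be the bookkeeping of the $\pm1$ offsets: a corner is attached to the box $(q-1,\overline{p})$ rather than to $(p,q)$ directly, so the induced reflection on boxes is $(a,b)\mapsto(\overline{a}-1,\overline{b}-1)$ (point reflection about the center of the origin box), and one must track carefully how the consecutive-integer inequalities $>$ and $\geqslant$ interchange under the sign flip. Once this translation table is written out, both the corner-position equivalence and the rank identity follow mechanically; the example $w=\iota(\overline{2}\ 3\ 1)$, where $\cnr(w)$ pairs up as $\{(1,3,\overline{1}),(2,\overline{2},2)\}$ and $\{(1,1,2),(3,0,\overline{1})\}$, serves as a consistency check.
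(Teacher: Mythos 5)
Your proof is correct. Note, however, that the paper itself offers no argument for this statement: it is quoted as Lemma 1.1 of the cited Anderson--Fulton paper \cite{AF16} and used as a black box, so there is no internal proof to compare against. What you have produced is a complete, self-contained verification from the definitions, and every step checks out: the involution property of $\perp$ (reducing to one implication), the translation table $w(\overline{p})=\overline{w(p)}$, $w(\overline{p}+1)=\overline{w(p-1)}$, $w^{-1}(q)=\overline{w^{-1}(\overline{q})}$, $w^{-1}(q-1)=\overline{w^{-1}(\overline{q}+1)}$ converting the SE-corner inequalities for the box $(q-1,\overline{p})$ exactly into those for $(\overline{q},p-1)$ (with the strict/non-strict swap handled correctly for integer values), and the rank bookkeeping $r_{\iota(w)}(\overline{p}+1,\overline{q}+1)=(n+p)-\bigl((n-q+1)-k\bigr)=k+p+q-1$, which is indeed the first entry of $(k,p,q)^{\perp}$. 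Your consistency check against the example $w=\iota(\overline{2}\ 3\ 1)$ also matches the pairing visible in the paper's Figure \ref{fig:exdiagram}. The only cosmetic remark is that your description of the box-level reflection $(a,b)\mapsto(\overline{a}-1,\overline{b}-1)$ is a point reflection through the common corner of the four central boxes rather than through the center of the origin box, but this does not affect the argument.
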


We can see in Figure \ref{fig:exdiagram} that both corners in the left half of the diagram are symmetric by the origin to other two corners in the right side.
This behavior will happen for every signed permutation $w$, implying that half of $\cnr(\iota(w))$ suffices to determine the signed permutation $w$; we will consider those corners appearing in the first $n$ columns.

A \emph{corner position} of signed permutation $w$ is a pair $(p,q)$ such that the box $(q-1,\overline{p})$ is a southeast (SE) corner of the extended diagram of $w$. The \emph{set of corners} $\cnr(w)$ of a signed permutation $w$ is the set of triples $(k,p,q)$ such that $(q-1,\overline{p})$ is a SE corner of the extended diagram $\extDw$ and $k=r_{w}(p,q)$, except for corner positions $(p,q)$ where $p=1$ and $q<0$. This exception comes from the fact that $(1,q)$, for $q<0$, is not a corner position in $\iota(w)$ because the respective box $(q-1,\overline{1})$ cannot be a SE corner since $w(0)=0$. 

\begin{rem}
Anderson and Fulton in \cite{AF16, AF14} defined a slightly different set called \emph{essential set} of $w$. This set is contained in the set of corners, since they remove a few ``redundant''  SE corners. In the present work, we need the whole set of corners since the essential set is not enough to perform our computations.
\end{rem}

Since the integer $k$ is the rank of $w$ in $(p,q)$, sometimes we can simply say that the corner position $(p,q)\in\cnr(w)$, instead of the triple $(k,p,q)$.

The Figure \ref{fig:diagram1} illustrates the extended diagram and the set of cornet of the signed permutation $w=10\ 1\ 5\ 3\ \overline{2}\ \overline{4}\ 6\ \overline{9}\ \overline{8}\ \overline{7}$.

\begin{figure}[ht]
	\centering
	\includegraphics[scale=0.7]{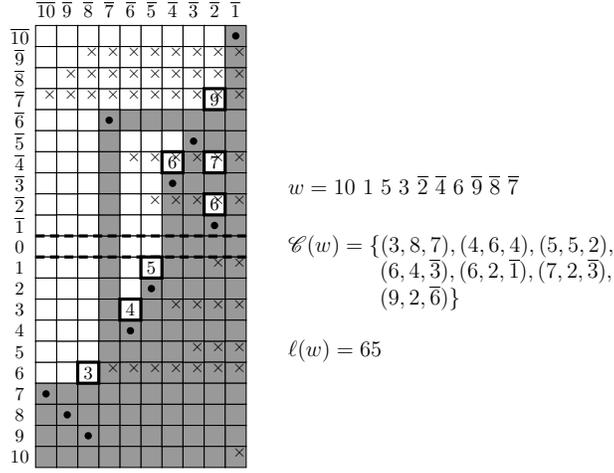}
	\caption{Diagram and set of corners of a signed permutation.}
	\label{fig:diagram1}
\end{figure}
 
To make the diagrams look cleaner, from now on we won't denote $\times$ in the extended diagrams $\extDw$. During the text, it can happen that we omit the word ``extended'' since we are only interested in studying the extended diagram of a signed permutation $w$ so that the diagram $D(w)$ won't be useful for us. 

\subsection{NE path and unessential corners}

Suppose that $w\in \weyl_{n}$ is any signed permutation. There are two notable classes of SE corner in the set $\cnr(w)$ that we will be important to our main theorem. They are the corners in the northeast path and the unessential corners.

Given any signed permutation $w$, consider a \emph{(strict) partial order} for the set of corners $\cnr(w)$ by $(p,q)<(p',q')$ if and only if $p>p'$ and $q<q'$, for corner positions $(p,q),(p',q')\in \cnr(w)$. For example, in Figure \ref{fig:diagram1}, the unique possible relation is $(4,\overline{3})<(2,\overline{2})$, the two boxes filled in with the value $6$. 

Define the \emph{northeast (NE) path} as the set $\NE(w)$ of minimal elements of $\cnr(w)$ relative to the poset ``$<$''. Using the same example of Figure \ref{fig:diagram1}, we have that $\NE(w)=\cnr(w)-\{(6,2,\overline{1})\}$, since all the corners are minimal under this poset except $(6,2,\overline{1})$.

The positions $(p_{i},q_{i})$ of the NE path $\NE(w)$ can be ordered so that $p_{1}\geqslant p_{2}\geqslant \cdots \geqslant p_{r}>0$ and $q_{1}\geqslant q_{2}\geqslant \cdots \geqslant q_{r}$. In fact, suppose that we order $p_{1}\geqslant p_{2}\geqslant \cdots \geqslant p_{r}>0$ but there is $i$ such that and $q_{i}<q_{i+1}$. If $p_{i}=p_{i+1}$ then we can exchange $i$ and $i+1$. Otherwise, if $p_{i}>p_{i+1}$ then $(p_{i},q_{i})<(p_{i+1},q_{i+1})$ and $(p_{i+1},q_{i+1})$ does not belong to the NE path.

Given a signed permutation $w$, we say that a corner position $(p,q)$ of $\cnr(w)$ is \emph{unessential} if there are corner positions $(p_{1},q_{1})$, $(p_{2},q_{2})$ and $(p_{3},q_{3})$ in the NE path $\NE(w)$ satisfying the following conditions:
\begin{gather*}
p_{1}=p \mbox{ and } q_{1}<q<0;\\
p_{2}>0 \mbox{ and } q_{2}=\overline{q}+1;\\
(p_{3},q_{3})<(p,q).
\end{gather*}

In other words, $(p,q)$ is not a minimal corner in the poset in the upper half of the diagram, the box $(q_{1}-1,\overline{p_{1}})$ lies above and in the same column of the box $(q-1,\overline{p})$, and the box $(\overline{q_{2}},p_{2}-1)$ reflected from $(q_{2}-1,\overline{p_{2}})$ lies to the right and in the same row of $(q-1,\overline{p})$, as shown in Figure \ref{fig:configuness}.
We denote by $\uness(w)$ the set of all unessential corners of $w$.

\begin{figure}[ht]
	\centering
	\includegraphics[scale=0.8]{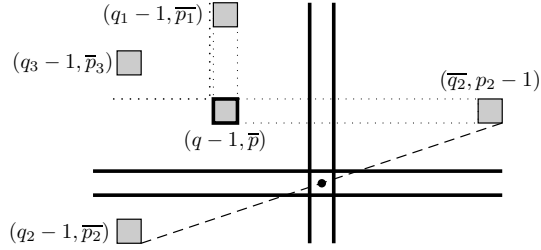}
	\caption[Configuration of an unessential corner $(p,q)$.]{Configuration of an unessential corner $(p,q)$. The highlighted box $(q-1,\overline{p})$ satisfies all the conditions.}
	\label{fig:configuness}
\end{figure}

It is important to emphasize that all three corners $(p_{1},q_{1})$, $(p_{2},q_{2})$ and $(p_{3},q_{3})$ must belong to the NE path $\NE(w)$.

For instance, considering the signed permutation $w=10\ 1\ 5\allowbreak\ 3\ \overline{2}\ \overline{4}\ 6\ \overline{9}\ \overline{8}\ \overline{7}$ of Figure \ref{fig:diagram1}, the set of unessential corners $\uness(w)$ only contains the triple $(6,2,\overline{1})$.

\section{Theta-triples and theta-vexillary signed  permutations}\label{sec:svex}


A \emph{theta-triple} is three $s$-tuples $\ttau=(\kk,\pp,\qq)$ with
\begin{align}\label{eq:defvex}
\kk & =(0< k_{1}<k_{2}<\dots < k_{s}), \nonumber\\
\pp& =(p_{1} \geqslant p_{2} \geqslant \dots \geqslant p_{s} >0),\\
\qq & =(q_{1} \geqslant q_{2} \geqslant \dots \geqslant q_{s}), \nonumber
\end{align}
and satisfying eight conditions. We intentionally split such conditions in three blocks that share common characteristics. The first three conditions are
\begin{enumerate}
\item[A1.] $q_{i}\neq 0$ for all $i$;
\item[A2.] $q_{i}\neq-q_{j}$, for any $i\neq j$.
\item[A3.] If $q_{s}<0$ then $p_{s}>1$;
\end{enumerate}

Now, let $a=a(\ttau)$ be the integer such that $q_{a-1}>0>q_{a}$, allowing $a=1$ and $a=s+1$ for the cases where all $q$'s are negative or all $q$'s are positive, respectively. For all $i\geqslant a$, denote by $R(i)$ (or $R(i)_{\ttau}$ to specify the triple) the unique integer such that $q_{R(i)}>-q_{i}>q_{R(i)+1}$; if necessary, consider $k_{0}=0$, $p_{0}=+\infty$, $q_{0}=+\infty$, and $R(a-1)=a-1$.
The next three conditions are
\begin{enumerate}
\item[B1.] $(p_{i}-p_{i+1})+(q_{i}-q_{i+1})> k_{i+1}-k_{i}$, for $1\leqslant i<a-1$;
\item[B2.] $(p_{i}-p_{i+1})+(q_{i}-q_{i+1})> (k_{i+1}-k_{i})+(k_{R(i)}-k_{R(i+1)})$, for $a\leqslant i<s$;
\item[B3.] $p_{s}+q_{s}+k_{s}> k_{R(s)}+1$, if $a \leqslant s$.
\end{enumerate}

It is important to observe that none of the above conditions compare indexes $a-1$ and $a$. Finally, consider $a\leqslant i\leqslant s$ and let $L(i)=L_{\ttau}(i)$ be the biggest integer $j$ in $\{R(i)+1,\dots, a-1\}$ satisfying $k_{j}-k_{R(i)+1}\geqslant q_{R(i)+1}-q_{j}$, i.e.,
\begin{align}
L(i)=\max\{R(i)+1\leqslant j \leqslant a-1 \tq k_{j}-k_{R(i)+1}\geqslant q_{R(i)+1}-q_{j}\}.
\end{align}

The last two conditions are
\begin{enumerate}
\item[C1.] $-q_{i}\geqslant k_{i}-k_{R(i)}$ for all $a\leqslant i\leqslant s$;
\item[C2.] $-q_{i}\geqslant {q_{L(i)}+k_{L(i)}-k_{R(i)}}$ for all $a\leqslant i\leqslant s$.
\end{enumerate}

Given a theta-triple $\ttau$, \emph{the construction algorithm of the permutation $w(\ttau)$} is given by a sequence of $s+1$ steps as follows:
\begin{description}
\item [Step ($1$)] Starting in the $p_{1}$ position, place $k_{1}$ consecutive entries, in increasing order, ending with $-q_{1}$. Mark the \emph{absolute} value of these numbers as ``used'';

\item [Step ($i$)] For $1<i\leqslant s$, starting in the $p_{i}$ position, or the next available position to the right, fill the next available $k_{i}-k_{i-1}$ positions with entries chosen consecutively from the unused \emph{absolute} numbers, in increasing order, ending with $-q_{i}$ or, if it is not available, the biggest unused number below $-q_{i}$. Again, mark the absolute value of these numbers as ``used'';

\item [Step ($s+1$)] Fill the remaining available positions with the unused positive numbers, in increasing order.
\end{description}

Notice that we should mark as used the absolute of the placed values because we allow negative $q_{i}$ for a theta-triple.

A signed permutation $w\in \weyl_{n}$ is called \emph{theta-vexillary} if $w=w(\ttau)$ comes from some theta-triple $\ttau=(\kk,\pp,\qq)$.

\begin{ex}\label{ex:perm1}
The permutation $w$ given in Figure \ref{fig:diagram1} can be obtained from the triple $\ttau=(3\: 4\: 5\: 6\: 9,\: 8\: 6\: 5\: 4\: 2,\: 7\: 4\: 2\: \overline{3}\: \overline{6})$ using the above algorithm:
$$
\begin{array}{cccccccccccc}
 & \cdot & \cdot & \cdot & \cdot & \cdot & \cdot & \cdot & \mathbf{\overline{9}} & \mathbf{\overline{8}} & \mathbf{\overline{7}} \\ 
 & \cdot & \cdot & \cdot & \cdot & \cdot & \mathbf{\overline{4}} & \cdot & {\overline{9}} & {\overline{8}} & {\overline{7}} \\ 
 & \cdot & \cdot & \cdot & \cdot & \mathbf{\overline{2}} & {\overline{4}} & \cdot & {\overline{9}} & {\overline{8}} & {\overline{7}} \\  
 & \cdot & \cdot & \cdot & \mathbf{3} & {\overline{2}} & {\overline{4}} & \cdot & {\overline{9}} & {\overline{8}} & {\overline{7}} \\  
 & \cdot & \mathbf{1} & \mathbf{5} & {3} & {\overline{2}} & {\overline{4}} & \mathbf{6} & {\overline{9}} & {\overline{8}} & {\overline{7}} \\  
w=  & \mathbf{10} & {1} & {5} & {3} & {\overline{2}} & {\overline{4}} & {6} & {\overline{9}} & {\overline{8}} & {\overline{7}}
 \end{array}
$$

Clearly, $\ttau$ satisfies all eight conditions above and, then, $\ttau$ is a theta-triple.
Thus, $w$ is theta-vexillary signed permutation. Observe that every pair $(p_{i},q_{i})$ in this triple is also a corner position in the diagram of Figure \ref{fig:diagram1}. This fact is not a coincidence, and we will show that every theta-triple are corner positions on the permutation.
\end{ex}

Notice the construction algorithm does not create an inversion inside a step, i.e., if $a<b$ are positions placed by a Step $(i)$ then $w(a)<w(b)$.

\begin{rem}
The definition of a theta-triple was motivated by the \emph{triple of type C} given by Anderson and Fulton \cite{AF15}. Indeed, any theta-triple is a triple of type C, but the converse is not true. A theta-triple has two properties that a triple of type C does not: each $(k_{i},p_{i},q_{i})$ is associated to a SE corner of $w(\ttau)$ (Proposition \ref{prop:SEcorners}); and any theta-vexillary permutation is given by a unique theta-triple $\ttau$ (Proposition \ref{prop:unique}).
Both results are relevant  when we study SE corners in the diagram of $w(\ttau)$.
\end{rem}

Now, we will give a brief explanation about the eight conditions of a theta-triple.
Conditions A1, A2 and A3  guarantee that the permutation $w(\ttau)$ associated to such theta-triple is a signed permutation.

Conditions B1, B2 and B3, in some sense, characterize a theta-vexillary permutations as well as the condition $(p_{i}-p_{i+1})+(q_{i}-q_{i+1})> k_{i+1}-k_{i}$ does for vexillary permutations. In condition B2, an extra $k_{R(i)}-k_{R(i+1)}$ is added to the right side because, during the construction of $w(\ttau)$, Step $(i)$ skips an equal number of entries that have already been used in previous steps. Moreover, condition B3 is equivalent to apply $i=s$ in condition B2, where we consider the extreme cases $(k_{0},p_{0},q_{0})=(0,n,n)$ and $(k_{s+1},p_{s+1},q_{s+1})=(n,1,-n)$.

Finally, for conditions C1 and C2, the next lemma states an equivalent definition of them based on the construction algorithm of $w(\ttau)$:

\begin{lem}\label{lema:posEntriesStep} 
The conditions \emph{C1} and \emph{C2} are equivalent, respectively, to
\begin{enumerate}
\item[{C1$'$.}] Given any $a\leqslant i \leqslant s$, all entries placed by Steps $(a)$ to $(i)$ are positive;

\item[{C2$'$.}] Given any $a\leqslant i \leqslant s$, all entries placed by Steps $(R(i)+1)$ to $(a-1)$ are strictly bigger than $q_{i}$.
\end{enumerate}
\end{lem}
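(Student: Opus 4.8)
The plan is to translate each arithmetic inequality into a statement about which \emph{absolute} values are consumed at each stage of the construction algorithm. The first thing I would record is a clean reformulation of the value-selection rule. Since the entries placed in a step are consecutive among the unused absolute numbers and end at $-q_l$, a step $(l)$ with $l<a$ (so $-q_l<0$) claims the $k_l-k_{l-1}$ \emph{smallest} unused absolute values that are $\geq q_l$, while a step $(l)$ with $l\geq a$ that places only positive entries claims the $k_l-k_{l-1}$ \emph{largest} unused absolute values that are $\leq -q_l$. In particular, Steps $(1)$ through $(R(i))$ end at $q_1>\cdots>q_{R(i)}>-q_i$ and extend upward, so every absolute value they use is $\geq q_{R(i)}>-q_i$; consequently none of $\{1,\dots,-q_i\}$ is touched before Step $(R(i)+1)$, and the whole interval $[1,q_{R(i)}-1]$ is free at that moment. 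Note already that the positions $p_l$ never enter this analysis, since the multiset of values placed in a step does not depend on where the step begins.

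I would prove C2 $\Leftrightarrow$ C2$'$ first, since it involves only the negative steps $(R(i)+1),\dots,(a-1)$ and is therefore independent of what happens from Step $(a)$ onward. Condition C2$'$ says exactly that every absolute value used by these steps is $<-q_i$, and since they greedily fill the free interval from the bottom, the crux is their highest reached value. This is where $L(i)$ enters: the defining inequality $k_j-k_{R(i)+1}\geq q_{R(i)+1}-q_j$ rewrites as $k_j+q_j\geq k_{R(i)+1}+q_{R(i)+1}$, which is precisely the condition that Steps $(R(i)+2),\dots,(j)$ fill the gap $[q_j,q_{R(i)+1}-1]$ completely and merge contiguously with the block of Step $(R(i)+1)$. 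I would show that for $j\leq L(i)$ the blocks $(R(i)+1),\dots,(j)$ occupy a contiguous interval of length $k_j-k_{R(i)}$ based at $q_j$, so the maximal value reached equals $q_{L(i)}+(k_{L(i)}-k_{R(i)})-1$, and that the remaining steps $(L(i)+1),\dots,(a-1)$, whose $q$-values are smaller and which fail the contiguity inequality, only fill slots below this maximum. Then every used value is $<-q_i$ if and only if $q_{L(i)}+(k_{L(i)}-k_{R(i)})-1<-q_i$, which is exactly condition C2.

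For C1 $\Leftrightarrow$ C1$'$, I would argue by induction on $i\geq a$, assuming C1$'$ already holds for Steps $(a),\dots,(i-1)$ (so each places only positive entries). Step $(i)$ places only positive entries precisely when at least $k_i-k_{i-1}$ unused positive numbers are $\leq -q_i$ at its start, so I must count the absolute values $\leq -q_i$ consumed by Steps $(1),\dots,(i-1)$. By the first paragraph, Steps $(1),\dots,(R(i))$ contribute none; by C2$'$, the intermediate negative steps $(R(i)+1),\dots,(a-1)$ contribute all $k_{a-1}-k_{R(i)}$ of their values; and the positive steps $(a),\dots,(i-1)$, each ending at $-q_l\leq -q_i$, contribute all $k_{i-1}-k_{a-1}$ of their values. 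Summing gives exactly $k_{i-1}-k_{R(i)}$ consumed values $\leq -q_i$, hence $-q_i-(k_{i-1}-k_{R(i)})$ free positive numbers $\leq -q_i$. Thus positivity of Step $(i)$ is equivalent to $-q_i-(k_{i-1}-k_{R(i)})\geq k_i-k_{i-1}$, i.e.\ to $-q_i\geq k_i-k_{R(i)}$, which is condition C1.

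The main obstacle is the max-reaching analysis underlying C2 $\Leftrightarrow$ C2$'$: one must verify carefully that the greedy block-filling of the negative steps is genuinely contiguous up through Step $(L(i))$ and that no later step overshoots, keeping track of how the blocks interleave with the gaps left below $q_{R(i)}$. Everything else is bookkeeping driven by the reformulation of the selection rule, together with the fact that C2$'$ (being about steps strictly before $a$) can be established independently of the induction used for C1$'$, so there is no circularity.
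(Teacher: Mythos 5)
Your proposal follows the paper's own strategy almost step for step: recast the selection rule (steps before $(a)$ take the smallest unused absolute values $\geqslant q_l$, positive steps take the largest unused ones $\leqslant -q_l$), prove C1 $\Leftrightarrow$ C1$'$ by counting how many values of $[1,-q_i]$ have been consumed (the paper does this in one shot, you do it by induction on $i$; the arithmetic is identical, and both versions quietly rely on C2$'$ for the count), and prove C2 $\Leftrightarrow$ C2$'$ by identifying the largest absolute value reached by Steps $(R(i)+1),\dots,(a-1)$ with $q_{L(i)}+k_{L(i)}-k_{R(i)}-1$.

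The genuine gap is in that last identification, exactly at the step you yourself flag as the main obstacle. The inequality $k_j+q_j\geqslant k_{R(i)+1}+q_{R(i)+1}$ defining $L(i)$ compares Step $(j)$ with Step $(R(i)+1)$ \emph{only}; it is necessary, but not sufficient, for the blocks of Steps $(R(i)+1),\dots,(j)$ to form a contiguous interval based at $q_j$, because contiguity can fail at an intermediate step and a later step can overshoot anyway. Concretely, take $n=22$, $\kk=(3,6,8,18,19,20)$, $\qq=(20,10,5,4,2,\overline{18})$, and $\pp$ chosen so that B1 holds, e.g.\ $\pp=(17,16,15,5,4,3)$: one checks all eight conditions, with $a=6$, $R(6)=1$, $L(6)=5$, and C2 reading $18\geqslant 2+19-3=18$, which is true. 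But before Step $(4)$ the used absolute values are $\{20,21,22\}$, $\{10,11,12\}$, $\{5,6\}$, so Step $(4)$, needing ten values ending at $-4$, takes absolute values $4,7,8,9,13,14,\dots,18$. The union of the blocks of Steps $(2),\dots,(5)$ is $\{2\}\cup\{4,\dots,18\}$, not an interval based at $q_5=2$; the maximum reached is $18$, not $q_{L(6)}+k_{L(6)}-k_{R(6)}-1=17$; and the entry $-18=q_6$ is placed, so C2$'$ fails while C2 holds. Only the trivial direction C2$'\Rightarrow$ C2 of your argument survives (the $k_{L(i)}-k_{R(i)}$ distinct values placed through Step $(L(i))$ are all $\geqslant q_{L(i)}$, so the maximum is at least $q_{L(i)}+k_{L(i)}-k_{R(i)}-1$); the direction C2 $\Rightarrow$ C2$'$ is precisely where the contiguity claim is needed and where it breaks. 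In fairness, this is the same unjustified assertion on which the paper's proof rests (``the smallest possible entry placed by Steps $(R(i)+1)$ to $(L(i))$ is limited below by $\overline{q_{L(i)}+k_{L(i)}-k_{R(i)}}+1$''), so you have faithfully reconstructed the published argument rather than introduced a new error; but a correct proof requires controlling $q_j+k_j-k_{R(i)}$ for \emph{every} $j$ with $R(i)<j<a$ — the maximum of the actual filling process is $\max_j\,[q_j+k_j]-k_{R(i)}-1$ — and the index $L(i)$ as defined need not realize that maximum.
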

\begin{proof}
For the first statement, observe that all Steps from $(a)$ to $(i)$ must skip at most $k_{a-1}-k_{R(i)}$ values because they were already used in Steps $(R(i)+1)$ to $(a-1)$ and denote by $\alpha:=-q_{i}-(k_{a-1}-k_{R(i)})$ the number of available positive entries from $1$ to $\overline{q_{i}}$ that can be used by Steps $(a)$ to $(i)$. Then, condition C1 is equivalent to say that $\alpha\geqslant k_{i}-k_{a-1}$, i.e., there is enough positive values available to be placed by Steps $(a)$ to $(i)$. 

For the second assertion, remember that the definition of $L(i)$ says that it is the biggest integer in $\{R(i)+1,\dots, a-1\}$ where $k_{L(i)}-k_{R(i)+1}\geqslant q_{R(i)+1}-q_{L(i)}$. The smallest possible entry placed by Steps $(R(i)+1)$ to $(L(i))$ is limited below by $\overline{q_{L(i)}+k_{L(i)}-k_{R(i)}}+1$. Since for any Step $(j)$ after $L(i)$, we have that $k_{j}-k_{R(i)+1}<q_{R(i)+1}-q_{j}$, then no entry placed by such step cannot be smaller than $\overline{q_{L(i)}}$. So, every entry placed by Steps $(R(i)+1)$ to $(a-1)$ is limited below by $\overline{q_{L(i)}+k_{L(i)}-k_{R(i)}}+1$, and we conclude that both conditions C2 and C2$'$ imply that $q_{i}< \overline{q_{L(i)}+k_{L(i)}-k_{R(i)}}+1$.
\end{proof}

In other words, conditions C1 and C2 guarantee that given $i\geqslant a$, then all values placed by Steps $(R(i)+1)$ to $(i)$ ranges from ${q_{i}}$ to $\overline{q_{i}}$. In practice, it will be easier to use C1$'$ and C2$'$ instead of C1 and C2.

Now, let us study the descents of a theta-vexillary signed permutation $w(\ttau)$ and its inverse $w(\ttau)^{-1}$. 

\begin{prop}\label{prop:descentsofW}
Let $w=w(\ttau)$ be a theta-vexillary signed permutation and $\ttau=(\kk,\pp,\qq)$ be a theta-triple. Then all the descents of $w$ are at positions $p_{i}-1$, i.e, for each $i$, we have $w(p_{i}-1)>\overline{q_{i}}\geqslant w(p_{i})$ and there are no other descents.
\end{prop}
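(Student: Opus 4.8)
The plan is to read off the descent structure of $w=w(\ttau)$ directly from the construction algorithm, rather than from the numerical conditions B and C. The key observation is that the algorithm fills positions in $s+1$ disjoint ``blocks'': Step $(i)$ occupies a contiguous run of positions (the next $k_i-k_{i-1}$ available slots starting at or after $p_i$), and within a single step the entries are placed in strictly increasing order, so no descent can occur strictly inside a step. Hence every descent of $w$ must occur at a boundary between the run placed by Step $(i-1)$ (or earlier) and the run placed by Step $(i)$. I would therefore first establish, as a preliminary, that Step $(i)$ actually begins at position $p_i$ — i.e. that position $p_i$ is always available when Step $(i)$ runs — so that the ``or the next available position to the right'' clause is never triggered and the blocks line up exactly with the $p_i$. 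This is where conditions B1, B2, B3 enter: they are precisely the spacing inequalities guaranteeing that the $k_i-k_{i-1}$ entries placed by Step $(i-1)$ do not overrun past position $p_i-1$.

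Granting that Step $(i)$ starts exactly at $p_i$, the descent at $p_i-1$ is a comparison between $w(p_i-1)$, the last entry placed by the step ending at position $p_i-1$, and $w(p_i)$, the first entry placed by Step $(i)$. The entry $w(p_i)$ is the smallest entry placed in Step $(i)$; since Step $(i)$ ends with $-q_i$ (or the largest unused value below it) and fills $k_i-k_{i-1}$ slots in increasing order, the first entry is small, and in fact one shows $w(p_i)\leqslant \overline{q_i}$. For the other side, $w(p_i-1)$ is the top of the preceding block, which is $-q_{i-1}$ (or close to it) when $p_{i-1}>p_i$, and one checks $w(p_i-1)>\overline{q_i}$ using the monotonicity $q_{i-1}\geqslant q_i$ together with conditions C1$'$ and C2$'$, which (via Lemma \ref{lema:posEntriesStep}) pin down the exact range $q_i$ to $\overline{q_i}$ of the values placed by Steps $(R(i)+1)$ through $(i)$. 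Thus $w(p_i-1)>\overline{q_i}\geqslant w(p_i)$, giving a genuine descent at $p_i-1$.

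To finish I would argue there are no other descents. Inside each block there are none by the increasing-order remark. At a block boundary $p_j-1$ that does \emph{not} correspond to any index (for instance when consecutive $p$'s coincide, $p_i=p_{i+1}$, so the two steps merge into one increasing run, or at the final Step $(s+1)$ which places the remaining positive values in increasing order), I would verify the transition is an ascent: when $p_i=p_{i+1}$ the entries continue increasing across the boundary because both steps draw the next unused values in increasing order; and the transition into Step $(s+1)$ is an ascent because the remaining positive values all exceed every entry placed before them once the bounded ranges from C1$'$/C2$'$ are accounted for. Collecting these cases shows the descents are exactly at the positions $p_i-1$ with $p_i$ distinct.

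The main obstacle I expect is the bookkeeping around the functions $R(i)$ and $L(i)$ and the ``skipped'' used values in the negative part of the permutation (indices $i\geqslant a$). When $q_i$ is negative, Step $(i)$ must avoid absolute values already used by earlier steps, so the clean picture ``Step $(i)$ starts at $p_i$ and ends at $-q_i$'' requires the corrected spacing of condition B2 (the extra $k_{R(i)}-k_{R(i+1)}$ term) together with C1$'$ and C2$'$ to guarantee both that enough unused values of the right sign exist and that they land in the interval $[q_i,\overline{q_i}]$. Verifying that $w(p_i-1)>\overline{q_i}$ and $w(p_i)\leqslant\overline{q_i}$ in this regime — i.e. that the last value of the preceding block genuinely overshoots $\overline{q_i}$ while the first value of Step $(i)$ does not — is the delicate step, and it is exactly here that conditions B2, B3, C1 and C2 are used in an essential way rather than merely to keep the construction well defined.
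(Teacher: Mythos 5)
Your overall strategy (induct along the steps of the construction algorithm, use that no descent occurs inside a step, locate descents at boundaries between steps, and invoke the B and C conditions there) is the same as the paper's, but your picture of how the blocks sit inside the permutation is reversed, and this invalidates the key step. Since $p_{1}\geqslant p_{2}\geqslant\cdots\geqslant p_{s}$, Step $(i)$ begins to the \emph{left} of the block of Step $(i-1)$; every position strictly smaller than $p_{i}$ is filled by a step of index strictly \emph{larger} than $i$, and a block need not even be contiguous (in Example \ref{ex:perm1}, Step $(5)$ occupies positions $2$, $3$ and $7$). Consequently $w(p_{i}-1)$ is not ``the top of the preceding block, which is $-q_{i-1}$'': in that Example, $w(p_{2}-1)=w(5)=\overline{2}$ is placed by Step $(3)$, while $-q_{1}=\overline{7}$; likewise $w(p_{1}-1)=w(7)=6$ is placed by Step $(5)$. (For the same reason, Step $(i-1)$ cannot ``overrun past position $p_{i}-1$'', which lies to its left; what the B conditions control is the opposite overrun, of Step $(i)$ past $p_{i-1}$.) So the hard inequality $w(p_{i}-1)>\overline{q_{i}}$ cannot be obtained by comparing the last entry of Step $(i-1)$ with the first entry of Step $(i)$. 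What must be proved is that the \emph{later} step that eventually reaches position $p_{i}-1$ deposits there a value exceeding $\overline{q_{i}}$, and this is exactly the paper's counting argument: when Step $(i)$ reaches position $p_{i-1}-1$, its entry must lie in the interval $\mathcal{I}_{i}=\{\overline{q_{i-1}}+1,\dots,\overline{q_{i}}\}$, which requires showing that exactly $k_{R(i-1)}-k_{R(i)}$ values of $\mathcal{I}_{i}$ were consumed by earlier steps (this uses C2$'$) and that, by B2, $\#\mathcal{I}_{i}$ exceeds this count plus the number $\beta$ of entries still to be placed. Nothing in your proposal plays the role of this count; monotonicity of $\qq$ together with C1$'$/C2$'$ does not bound how many values inside $\mathcal{I}_{i}$ are already used.

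Two further claims are false as stated. If $p_{i-1}=p_{i}$ (allowed, since $\pp$ is only weakly decreasing), position $p_{i}$ is occupied when Step $(i)$ runs, so the ``next available position to the right'' clause \emph{is} triggered, contradicting your preliminary claim (you partly acknowledge this case later, but the two statements are inconsistent). More seriously, the transitions created by Step $(s+1)$ are not ascents: Step $(s+1)$ creates genuine descents at every still-vacant position $p_{j}-1$ --- in the Example, Step $(6)$ places $w(1)=10>1=w(2)$, which is precisely the descent at $p_{5}-1=1$ --- and these descents are part of the proposition's conclusion, so an argument showing those transitions are ascents would disprove the very statement you are trying to establish. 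The paper instead treats Step $(s+1)$ by running the same interval argument with $(k_{s+1},p_{s+1},q_{s+1})=(n,0,-n+1)$ appended to the triple.
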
 
\begin{proof}
In Step $(1)$, no descents are created, unless $p_{1}=1$, in which case the permutation has a single descent at 0. For $1<i<a$, this is proved in Lemma 2.2 of \cite{AF14}.
Now, supposing that $a\leqslant i\leqslant s$ and $i\geqslant 2$, assume inductively that for $j<i$, there is a descent at position $p_{j}-1$ whenever this positions has been filled, satisfying $w(p_{j}-1)>\overline{q_{j}}\geqslant w(p_{j})$, and there are no other descents. By Lemma \ref{lema:posEntriesStep}, only positive entries are placed in consecutive vacant positions of Step $(i)$, from left to right, at position $p_{i}$ (or the next vacant position to the right, if $p_{i-1}=p_{i}$). We consider ``sub-steps'' of Step $(i)$, where we are placing an entry at position $p\geqslant p_{i}$, and distinguish three cases.
First, suppose we are at position $p$, with $p<p_{i-1}-1$. In this case, the previous entry placed in Step $(i)$ (if any) was placed at position $p-1$, so we did not create a descent at $p-1$. Position $p+1$ is still vacant, so no new descents are created.

To clarify this proof, let $\ttau=(3\: 4\: 5\: 6\: 9,\: 8\: 6\: 5\: 4\: 2,\: 7\: 4\: 2\: \overline{3}\: \overline{6})$ as in Example \ref{ex:perm1}. In Step $(5)$, the first entry placed is $1$ and it does not create a descent:
$$
w= \cdot \: \mathbf{1}\: \cdot \: {3}\: {\overline{2}} \: {\overline{4}} \: \cdot \: {\overline{9}} \: {\overline{8}} \: {\overline{7}} 
$$

Next, suppose we are at position $p=p_{i-1}-1$. This means that $p_{i-1}-p_{i}\leqslant k_{i}-k_{i-1}$, so let $\beta=(k_{i}-k_{i-1})-(p_{i-1}-p_{i})$ be the number of entries remaining to be placed in Step $(i)$, after placing the current at position $p$. 
Condition B1 tell us that $q_{i}\leqslant q_{i}+\beta<q_{i-1}$, then considering the integer interval $\mathcal{I}_{i}=\{\overline{q_{i-1}}+1,\dots,\overline{q_{i}}\}$, it must be non-empty. We claim that the entry $w(p)=w(p_{i-1}-1)$  lies in $\mathcal{I}_{i}$ and therefore $w(p_{i-1}-1)>\overline{q_{i-1}}\geqslant w(p_{i-1})$, proving this situation. Remember that the construction algorithm must skip those entries that its absolute value have already been used, and then this claim is equivalent to say that even removing from $\mathcal{I}_{i}$ those repetitions, there still is some value to be picked by $w(p)$ in $\mathcal{I}_{i}$.

To prove that claim, lets count how many values in $\mathcal I_{i}$ were used in previous steps. For $a\leqslant j<i$, any entry $x$ of Steps $(j)$ satisfies $x\leqslant \overline{q_{j}}\leqslant \overline{q_{i-1}}$, that means $x\not\in \mathcal{I}_{i}$. If $1\leqslant j\leqslant R(i)$ then any entry $x$ placed in Steps $(j)$ satisfies $x\leqslant\overline{q_{j}}\leqslant\overline{q_{R(i)}}<q_{i}$, implying that $\overline{x}\not\in \mathcal{I}_{i}$. If $R(i-1)<j<a$ then by condition C2$'$, any entry $x$ placed in Steps $(j)$ satisfies $x>q_{i-1}$, implying that $\overline{x}\not\in \mathcal{I}_{i}$. Finally, if $R(i)<j\leqslant R(i-1)$ then by condition C2$'$, any entry $x$ placed in Step $(j)$ satisfies $q_{i}<x\leqslant\overline{q_{j}}\leqslant\overline{q_{R(i-1)}}<q_{i-1}$, hence, $\overline{x}\in \mathcal{I}_{i}$. 
We conclude that the only absolute values placed in previous steps that belongs to the interval $\mathcal{I}_{i}$ are all the ones from Steps $(R(i)+1)$ to $(R(i-1))$. So there are $\alpha:=k_{R(i-1)}-k_{R(i)}$ values in $\mathcal{I}_{i}$ that cannot be used in Step $(i)$ in position $p$.
In order to place the correct value for position $p$ of Step $(i)$, we need to consider that the values which are going to be placed after position $p$ also must belong to $\mathcal{I}_{i}$ and are bigger than $w(p)$, i.e., it also is required to skip the $\beta$ biggest values in $\mathcal{I}_{i}$. Since the number of elements of $\mathcal{I}_{i}$ is $\overline{q_{i}}-\overline{q_{i-1}}$, follows from condition B2 that $\#(\mathcal{I}_{i})>\alpha+\beta$ and, therefore, there is some value in $\mathcal{I}_{i}$ to pick for $w(p)$.

Continuing the example, in Step $(5)$, the second entry placed is $5$, which creates a descent at position $3$:
$$
w= \cdot \: {1}\: \mathbf{5} \: {3}\: {\overline{2}} \: {\overline{4}} \: \cdot \: {\overline{9}} \: {\overline{8}} \: {\overline{7}} 
$$

Finally, suppose we are at position $p\geqslant p_{i-1}$. Using the previous case, the entry to be placed is some $x\in \mathcal{I}_{i}$. When an entry is placed in a vacant position to the right of a filled position, it does not create a descent since either all entries already placed the previous steps are smaller than $\overline{q_{i-1}}< x$ or the entries placed in this step is smaller than $x$. When it is placed to the left of a filled position, which can only happen at positions $p_{j}-1$ for some $j<i-1$, and it does create a descent at the position $p_{j}-1$ satisfying $w(p_{j}-1)>\overline{q_{i-1}}\geqslant \overline{q_{j}}\geqslant w(p_{j})$

In Step $(5)$ of our example, it remains to place the 3rd value $6$ in the next vacant position, which occurs at position $7$. Observe that we do not create a descent at  the filled position to its left, but we do create a descent at position $7$, since the position $8$ is already filled:
$$
w= \cdot \: {1}\: {5} \: {3}\: {\overline{2}} \: {\overline{4}} \: \mathbf{6} \: {\overline{9}} \: {\overline{8}} \: {\overline{7}} 
$$

At Step $(s+1)$, we can apply the previous case for $i=s+1$, adding the values $k_{s+1}=n$, $p_{s+1}=0, q_{s+1}=-n+1$ to $\ttau$. This procedure will create descents only at those $p_{j}-1$ which are still vacant.
\end{proof}

Given a triple $\ttau=(\kk,\pp,\qq)$, the dual triple is defined by $\ttau^{*}=(\kk,\qq,\pp)$, where $p$ and $q$ were switched. Clearly, a dual triple could not be a theta-vexillary permutation, but the dual triple is useful to compute the inverse of $w(\ttau)$.

The dual triple $\ttau^{*}=(k,q,p)$ determines a signed permutation $\iota(w(\ttau^{*}))$ in $ S_{2n+1}$ using the following algorithm:
\begin{description}
\item [Step (0)] Put a zero at the position $0$;
\item [Step (1)] Starting in the $q_{1}$ position, place $k_{1}$ consecutive entries, in increasing order, ending with $-p_{1}$. Mark the absolute value of these numbers as ``used'' and fill the reflection through $0$ with the respective reflection $w(\overline{a})=\overline{w(a)}$;
\item [Step (i)] For $1<i\leqslant s$, starting in the $q_{i}$ position (if $q_{i}<0$ then use a position before zero), or the next available position to the right, fill the next available $k_{i}-k_{i-1}$ positions with entries chosen consecutively from the unused absolute numbers, in increasing order, ending with $-p_{i}$ or, if it is not available, the biggest unused number below $-p_{i}$. Again, mark the absolute value of these numbers as ``used'' and fill the reflection through $0$ with the respective reflection $w(\overline{a})=\overline{w(a)}$;

\item [Step (s+1)] Fill the remaining available positions after $0$ with the unused positive numbers in increasing order. Finally, fill the reflection through $0$ with the respective reflection $w(\overline{a})=\overline{w(a)}$
\end{description}

The difference here compared to the construction using the theta-vexillary permutation is that we allow to have negative positions, so we need the full form of the permutation. The signed permutation $w(\ttau^{*})$ is obtained from $\iota(w(\ttau^{*}))$ by restricting it to the positions $\{1,\dots,n\}$.

\begin{lem}
We have $w(\ttau^{*})=w(\ttau)^{-1}$.
\end{lem}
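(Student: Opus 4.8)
The plan is to exploit the fact that, under the odd embedding $\iota$, taking the inverse of a permutation transposes its permutation matrix. Since $\iota$ is a group homomorphism we have $\iota(w(\ttau)^{-1})=\iota(w(\ttau))^{-1}$, and $w(\ttau^{*})$ is by definition the restriction of $\iota(w(\ttau^{*}))$ to the positions $\{1,\dots,n\}$; hence it suffices to prove the equality inside $S_{2n+1}$, namely that $\iota(w(\ttau^{*}))$ is the transpose of the permutation matrix of $\iota(w(\ttau))$. Equivalently, writing both in full form, I would show
$$
w(\ttau)(b)=a \quad\Longleftrightarrow\quad w(\ttau^{*})(a)=b \qquad \text{for all } \overline{n}\leqslant a,b\leqslant n.
$$
As a consistency check, this is exactly what the rank-function symmetry $r_{w^{-1}}(q,p)=r_{w}(p,q)$ predicts, which itself follows from the alternative formula $r_{w}(p,q)=\#\{i\leqslant\overline{p}\tq w(i)\geqslant q\}$ recorded above; I avoid routing the whole proof through corner positions so as not to create a forward dependence on Proposition~\ref{prop:SEcorners}.

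First I would compare the two construction algorithms step by step. Both run for $s+1$ steps, and at each Step $(i)$ with $1\leqslant i\leqslant s$ each algorithm places exactly $k_{i}-k_{i-1}$ new dots and marks $k_{i}-k_{i-1}$ new absolute values as ``used''. The core claim, to be proved by induction on $i$, is the invariant: after Step $(i)$ has been executed in both algorithms, the set of absolute values marked used is the same, and the $\ttau$-algorithm has placed a dot at $(w(\ttau)(c),c)$ if and only if the dual algorithm has placed the dot $(c,w(\ttau)(c))$; that is, the two partial permutation matrices are transposes of one another, their reflections through $0$ being automatically identified since both algorithms fill the reflected positions via $w(\overline{a})=\overline{w(a)}$. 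The base case, Step $(1)$, is a direct computation: the $\ttau$-algorithm places the increasing block ending in $-q_{1}$ at columns $p_{1},\dots,p_{1}+k_{1}-1$, whose transpose-then-reflect is precisely the increasing block ending in $-p_{1}$ placed at columns $q_{1},\dots,q_{1}+k_{1}-1$ by the dual Step $(1)$.

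For the inductive step I would use that neither algorithm creates an inversion inside a single step, so each step places an increasing staircase of dots; transposing and reflecting an increasing staircase again yields an increasing staircase, so it suffices to match the \emph{sets} of columns and of values placed at Step $(i)$. Here Lemma~\ref{lema:posEntriesStep} (conditions C1$'$ and C2$'$) and Proposition~\ref{prop:descentsofW} are the essential inputs: they pin down, at each stage, exactly which positions remain vacant and which absolute values remain unused, which is what lets me certify that ``the next available position to the right'' in one algorithm corresponds under transposition to a still-unused value in the other. Concretely, I expect the main obstacle to be the two fallback clauses of the algorithms together with the sign bookkeeping across $0$: I must show that a column is available for the $\ttau$-algorithm precisely when the matching absolute value is still unused for the dual algorithm, and that when the preferred entry $-q_{i}$ (respectively the target position $q_{i}$) is blocked, the substitute chosen by one algorithm is the transpose-reflection of the substitute chosen by the other. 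Carrying this correspondence at the level of the marked-used sets, maintained as part of the induction hypothesis, is where the real work lies; once it is established, Step $(s+1)$ fills the remaining positive values in increasing order in both algorithms and closes the induction, yielding $\iota(w(\ttau^{*}))=\iota(w(\ttau))^{-1}$ and hence $w(\ttau^{*})=w(\ttau)^{-1}$.
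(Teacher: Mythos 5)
Your overall route --- proving $\iota(w(\ttau^{*}))=\iota(w(\ttau))^{-1}$ by induction on the steps of the two construction algorithms, showing that they build transposed partial permutation matrices, with the no-inversions-within-a-step observation reducing each step to matching a block of positions against a block of values --- is the same route the paper takes; the paper merely compresses it into a citation of Lemma 2.3 of \cite{AF14}, adding that for $a\leqslant i\leqslant s$ the permutation $\iota(w)$ maps the set of positions filled at Step $(i)$ onto the set of entries placed at Step $(i)$, so that $\iota(w)^{-1}$ maps the latter onto the former.

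The problem is that the induction invariant you propose to carry is false in exactly the component you say does the real work. You claim that after Step $(i)$ ``the set of absolute values marked used is the same'' in the two algorithms. It is not: if the partial matrices are transposes of one another, then the values used by the $\ttau$-algorithm are the \emph{positions filled} by the dual algorithm and vice versa; the two used-sets themselves need not coincide. Concretely, in Example \ref{ex:perm1}, Step $(1)$ of the $\ttau$-algorithm places $\overline{9},\overline{8},\overline{7}$ at positions $8,9,10$ and marks $\{7,8,9\}$ as used, while Step $(1)$ of the dual algorithm places $\overline{10},\overline{9},\overline{8}$ at positions $7,8,9$ and marks $\{8,9,10\}$ as used. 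So the invariant fails already at the base case you describe as a direct computation. The correct bookkeeping is the cross-correspondence (values used by one algorithm equal positions occupied by the other), and it is precisely this crossing that makes the two fallback clauses exchange roles under transposition: ``biggest unused number below $-q_{i}$'' in one algorithm must match ``next available position to the right'' in the other. Beyond this, that matching of fallback clauses --- which you correctly identify as the entire substance of the lemma, everything else being formal --- is deferred (``where the real work lies'') rather than carried out. As it stands, then, the proposal is a plan with a misstated crux rather than a proof; once the invariant is repaired and the step-matching is actually done, it becomes exactly the argument of \cite{AF14} that the paper invokes.
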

\begin{proof}
We can prove in the same way as Lemma 2.3 of \cite{AF14}, adding the fact that for $a\leqslant i\leqslant s$, the permutation $\iota(w)$ maps the set $a(i)$ to $b(i)$ and, hence, the inverse $\iota(w)^{-1}$ maps $b(i)$ to $a(i)$.
\end{proof}

\begin{ex}
Consider the dual triple $\ttau^{*}=(3\: 4\: 5\: 6\: 9,\: 7\: 4\: 2\: \overline{3}\: \overline{6},\: 8\: 6\: 5\: 4\: 2)$ of the one gave in Example \ref{ex:perm1}. The permutation $\iota(w(\ttau^{*}))$  is constructed as follows:
$$
\begin{array}{ccccccccccc|c|cccccccccc}
  & \cdot & \cdot & \cdot & \cdot & \cdot & \cdot & \cdot & \cdot & \cdot & \cdot & \mathbf{0} & \cdot & \cdot & \cdot  & \cdot & \cdot & \cdot & \cdot & \cdot & \cdot & \cdot\\
 
 & \cdot & \mathit{8} & \mathit{9} & \mathit{10} & \cdot & \cdot & \cdot & \cdot & \cdot & \cdot & {0} & \cdot & \cdot & \cdot  & \cdot & \cdot & \cdot & \mathbf{\overline{10}} & \mathbf{\overline{9}} & \mathbf{\overline{8}} & \cdot\\

 & \cdot & {8} & {9} & {10} & \cdot & \cdot & \mathit{6} & \cdot & \cdot & \cdot & {0} & \cdot & \cdot & \cdot  & \mathbf{\overline{6}} & \cdot & \cdot & {\overline{10}} & {\overline{9}} & {\overline{8}} & \cdot\\

 & \cdot & {8} & {9} & {10} & \cdot & \cdot & {6} & \cdot & \mathit{5} & \cdot & {0} & \cdot & \mathbf{\overline{5}} & \cdot  & {\overline{6}} & \cdot & \cdot & {\overline{10}} & {\overline{9}} & {\overline{8}} & \cdot\\

 & \cdot & {8} & {9} & {10} & \cdot & \cdot & {6} & \mathbf{\overline{4}} & {5} & \cdot & {0} & \cdot & {\overline{5}} & \mathit{4}  & {\overline{6}} & \cdot & \cdot & {\overline{10}} & {\overline{9}} & {\overline{8}} & \cdot\\

 & \cdot & {8} & {9} & {10} & \mathbf{\overline{7}} & \mathbf{\overline{3}} & {6} & {\overline{4}} & {5} & \mathbf{\overline{2}} & {0} & \mathit{2} & {\overline{5}} & {4}  & {\overline{6}} & \mathit{3} & \mathit{7} & {\overline{10}} & {\overline{9}} & {\overline{8}} & \cdot\\

 & \mathit{\overline{1}} & {8} & {9} & {10} & {\overline{7}} & {\overline{3}} & {6} & {\overline{4}} & {5} & {\overline{2}} & {0} & {2} & {\overline{5}} & {4}  & {\overline{6}} & {3} & {7} & {\overline{10}} & {\overline{9}} & {\overline{8}} & \mathbf{1}\\
 \end{array}
$$

For each step, the bold numbers represent the values placed for such step, and the italic ones are their reflection through zero.
Thus, $w(\ttau^{*})= 2 \: \overline{5} \: 4 \: \overline{6} \: 3 \: 7 \: \overline{10} \: \overline{9} \: \overline{8} \: 1$ and we can easily verify that this permutation is the inverse  of
$w=10\ 1\ 5\ 3\ \overline{2}\ \overline{4}\ 6\ \overline{9}\ \overline{8}\ \overline{7}$.
\end{ex}

Although $w(\ttau^{*})$ is not theta-vexillary, a similar version of Proposition \ref{prop:descentsofW} holds for this case and the proof follows that same idea.

\begin{prop}\label{prop:descentsofWinv}
Let $w=w(\ttau)$ be a theta-vexillary signed permutation, for a theta-triple $\ttau=(\kk,\pp,\qq)$. Then all the descents of $w^{-1}$ are at positions $q_{i}-1$, when $i<a$, and $\overline{q_{i}}$, when $i\geqslant a$. In fact, we have
\begin{align*}
&w^{-1}(q_{i}-1)>\overline{p_{i}}\geqslant w^{-1}(q_{i}) \mbox{ , for } i<a;\\
&w^{-1}(\overline{q_{i}})>p_{i}-1\geqslant w^{-1}(\overline{q_{i}}+1) \mbox{ , for } i\geqslant a;
\end{align*}
and there are no other descents.
\end{prop}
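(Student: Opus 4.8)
The strategy is to reduce to the construction algorithm for the dual triple. By the preceding lemma we have $w^{-1}=w(\ttau^{*})$, where $\ttau^{*}=(\kk,\qq,\pp)$, so it is enough to locate the descents of the full permutation $\iota(w(\ttau^{*}))$ produced by the dual construction algorithm. That algorithm is the mirror image of the one analysed in Proposition \ref{prop:descentsofW}: its Step $(i)$ begins at position $q_{i}$ (using positions before $0$ when $q_{i}<0$) and ends with the entry $-p_{i}$, so it is formally the original algorithm with the roles of $\pp$ and $\qq$ interchanged, carried out in the full form inside $S_{2n+1}$. I would therefore first prove a dual analogue of Proposition \ref{prop:descentsofW}: the only descents of $\iota(w(\ttau^{*}))$ occur at the positions $q_{i}-1$, with $\iota(w(\ttau^{*}))(q_{i}-1)>\overline{p_{i}}\geqslant\iota(w(\ttau^{*}))(q_{i})$.

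To establish this analogue I would run the same induction on the steps of the algorithm. Processing each Step $(i)$ from left to right, I would show that every placement either creates no descent---when the entry lands in a vacant stretch---or creates exactly one of the expected descents at a position $q_{j}-1$, namely when the entry lands immediately to the left of an already occupied position. The technical heart is the dual of the interval estimate: replacing $\mathcal{I}_{i}$ by $\mathcal{J}_{i}=\{\overline{p_{i-1}}+1,\dots,\overline{p_{i}}\}$, I would count how many absolute values in $\mathcal{J}_{i}$ have already been consumed by earlier steps and how many must be reserved for the entries still to be placed in Step $(i)$, and then check that $\#\mathcal{J}_{i}$ strictly exceeds this total so that a valid entry remains. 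Here conditions B1--B3, read through the definition of $R(i)$, supply the needed strict inequalities, while conditions C1 and C2 (equivalently C1$'$ and C2$'$ from Lemma \ref{lema:posEntriesStep}) control the signs of the placed entries. Since the dual algorithm places every entry together with its reflection through $0$, a descent found at a negative position is automatically accompanied by its mirror image.

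It then remains to translate the descents of $\iota(w(\ttau^{*}))$ back to $w^{-1}\in\weyl_{n}$. For $i<a$ we have $q_{i}>0$, so $q_{i}-1\geqslant 0$ lies in the nonnegative range and the descent restricts directly to a descent of $w^{-1}$ with $w^{-1}(q_{i}-1)>\overline{p_{i}}\geqslant w^{-1}(q_{i})$. For $i\geqslant a$ we have $q_{i}<0$, so the descent sits at the negative position $q_{i}-1$; applying the defining relation $w^{-1}(\overline{m})=\overline{w^{-1}(m)}$ to the two inequalities, using $q_{i}=\overline{\overline{q_{i}}}$ and $q_{i}-1=\overline{\overline{q_{i}}+1}$, turns them into $w^{-1}(\overline{q_{i}})>p_{i}-1\geqslant w^{-1}(\overline{q_{i}}+1)$, exhibiting the descent at the positive position $\overline{q_{i}}$, exactly as claimed. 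Because the induction accounts for every descent created during the algorithm, no other descents occur.

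The step I expect to be the main obstacle is the dual interval count in the second paragraph. The difficulty is that $\ttau^{*}$ is not itself a theta-triple, and the theta-triple conditions B2, B3 and C2 are genuinely asymmetric in $\pp$ and $\qq$; consequently one cannot simply invoke Proposition \ref{prop:descentsofW} for $\ttau^{*}$, and the bookkeeping of used values---now complicated by the reflected entries placed across $0$---must be redone from scratch, deriving the required inequalities on $\mathcal{J}_{i}$ from the asymmetric conditions that hold only for the original triple $\ttau$.
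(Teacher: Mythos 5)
Your proposal is correct and takes essentially the same approach as the paper: the paper gives no detailed argument at all, remarking only that although $w(\ttau^{*})$ is not theta-vexillary, ``a similar version of Proposition \ref{prop:descentsofW} holds for this case and the proof follows that same idea,'' which is precisely the reduction via $w^{-1}=w(\ttau^{*})$ and the mirrored step-by-step descent induction you outline. Your translation of the descents at the negative positions $q_{i}-1$ into descents at $\overline{q_{i}}$ for $i\geqslant a$ is carried out correctly, and your proposal is in fact more explicit than the paper about the one genuinely delicate point, namely that the interval-counting bookkeeping must be redone for the dual algorithm since the conditions B2, B3, C1, C2 are asymmetric in $\pp$ and $\qq$.
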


\section{Extended diagrams for theta-vexillary permutations}

In this section, we aim to understand how a theta-vexillary permutation looks like in the extended diagram.

Given a position $(p,q)$ in the extended diagram $D^{+}(w)$, define the \emph{left lower region} of $(p,q)$ by the set boxes in the extended diagram strictly south and weakly west of the SE corner $(q-1,\overline{p})$. In other words, denoting it by $\llr(p,q)$, this set is 
\begin{align*}
\llr(p,q):=\{(a,b) \in D^{+}(w) \tq a\geqslant {q}, b\leqslant \overline{p}\}. 
\end{align*}

Notice that the construction algorithm of a theta-vexillary permutation $w(\ttau)$ can also be seen as a process of placing dots in the extended diagram, since each pair $(w(i),i)$ corresponds to a dot in the diagram. We can say that a Step $(i)$ places dots in the diagram using the following rule: if an entry $x$ is placed at a position $z$ in the permutation, i.e., $w(z)=x$, then it produces a dot at the box $(\overline{x},\overline{z})$ in the diagram. For instance, the triple $\ttau=(3\: 4\: 5\: 6\: 9,\: 8\: 6\: 5\: 4\: 2,\: 7\: 4\: 2\: \overline{3}\: \overline{6})$ of Example \ref{ex:perm1} whose diagram is represented in Figure \ref{fig:diagram1}. The first step places the entries $\overline{9}$, $\overline{8}$ and $\overline{7}$, respectively, at positions $8$, $9$ and $10$, which correspond to place dots in boxes $(9,\overline{8})$, $(8,\overline{9})$ and $(7,\overline{10})$. The second step places only a dot in the box $(4,\overline{6})$. The next steps places all other dots in the diagram.

\begin{prop}\label{prop:SEcorners}
Let $w=w(\ttau)$ be a theta-vexillary signed permutation and $\ttau=(\kk,\pp,\qq)$ be a theta-triple. Then we have the following:
\begin{enumerate}

\item The boxes $(q_{i}-1,\overline{p_{i}})$ and their reflection $(\overline{q_{i}}, p_{i}-1)$ are SE corners of the diagram of $\iota(w)$ (not necessary all of them);

\item For any $1\leqslant i\leqslant s+1$, all the dots placed by Step $(i)$ in the diagram are inside region $\llr(p_{i},q_{i})$ and outside $\llr(p_{i-1},q_{i-1})$;

\item $k_{i}$ is the number of dots inside the region $\llr(p_{i},q_{i})$.
\end{enumerate}
\end{prop}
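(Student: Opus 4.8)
The plan is to handle part (1) using the descent data already available, and to prove parts (2) and (3) simultaneously by reinterpreting the rank function as a count of dots lying in the region $\llr$.

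For part (1), I would fix $i$ and verify directly that the box $(q_{i}-1,\overline{p_{i}})$ meets the two defining inequalities \eqref{eq:defconerSn} of a SE corner of $\iota(w)$. The inequality expressing a descent of $w$ in column $\overline{p_{i}}$, namely $w(\overline{p_{i}})>q_{i}-1\geqslant w(\overline{p_{i}}+1)$, is the image under the relation $w(\overline{\imath})=\overline{w(i)}$ of the descent $w(p_{i}-1)>\overline{q_{i}}\geqslant w(p_{i})$ furnished by Proposition \ref{prop:descentsofW}. The inequality expressing a descent of $w^{-1}$ in row $q_{i}-1$, namely $w^{-1}(q_{i}-1)>\overline{p_{i}}\geqslant w^{-1}(q_{i})$, is exactly the statement of Proposition \ref{prop:descentsofWinv} when $i<a$, and for $i\geqslant a$ it is obtained by reflecting the descent $w^{-1}(\overline{q_{i}})>p_{i}-1\geqslant w^{-1}(\overline{q_{i}}+1)$ through the origin. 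Once $(q_{i}-1,\overline{p_{i}})$ is known to be a SE corner, its reflection $(\overline{q_{i}},p_{i}-1)$ --- which is the box of the reflected corner position $(\overline{p_{i}}+1,\overline{q_{i}}+1)$ --- is a SE corner by the symmetry of Lemma \ref{lema:symmetry}. The parenthetical caveat is accounted for by degenerate configurations (for instance $p_{i}=1$ with $q_{i}<0$, or coincidences when consecutive entries of $\pp$ or $\qq$ agree), which I would simply record.

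For parts (2) and (3), the first step is to observe that the rank function counts dots in $\llr$. A dot produced by the entry $w(z)=x$ sits in the box $(\overline{x},\overline{z})$, so the conditions $a\geqslant q$ and $b\leqslant \overline{p}$ defining $\llr(p,q)$ become $w(z)\leqslant \overline{q}$ and $z\geqslant p$; comparing with $r_{w}(p,q)=\#\{i\geqslant p \tq w(i)\leqslant \overline{q}\}$ identifies $r_{w}(p,q)$ with the number of dots in $\llr(p,q)$. Part (3) is therefore the assertion $r_{w}(p_{i},q_{i})=k_{i}$, which I would prove by showing that the dots in $\llr(p_{i},q_{i})$ are precisely those placed in Steps $(1)$ through $(i)$. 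On one hand, every entry placed in a Step $(j)$ with $j\leqslant i$ occupies a position $\geqslant p_{j}\geqslant p_{i}$ and has value $\leqslant \overline{q_{j}}\leqslant \overline{q_{i}}$ (using that $\pp$ and $\qq$ are weakly decreasing), so its dot lies in $\llr(p_{i},q_{i})$; since Steps $(1)$ to $(i)$ deposit $k_{1}+\sum_{j=2}^{i}(k_{j}-k_{j-1})=k_{i}$ dots (the B-conditions guarantee each step places its full quota), this already gives the lower bound. On the other hand, no dot placed in a later Step $(j)$, $j>i$, lies in $\llr(p_{i},q_{i})$. Granting both, $r_{w}(p_{i},q_{i})=k_{i}$, which is part (3); and part (2) follows at once, because the containment $\llr(p_{i-1},q_{i-1})\subseteq \llr(p_{i},q_{i})$ (again from the monotonicity of $\pp$ and $\qq$) combined with the identification of dots shows that the dots of Step $(i)$ are exactly $\llr(p_{i},q_{i})\setminus \llr(p_{i-1},q_{i-1})$. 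For the boundary index $i=s+1$ I would use the sentinel values $(k_{s+1},p_{s+1},q_{s+1})$ already introduced in the proof of Proposition \ref{prop:descentsofW}, and for $i=1$ the convention $p_{0}=+\infty$ makes $\llr(p_{0},q_{0})$ empty.

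The main obstacle is the exclusion statement: an entry placed by a later Step $(j)$ with $j>i$ at a position $z\geqslant p_{i}$ must exceed $\overline{q_{i}}$ (equivalently, any dot of a later step that falls in a column $\leqslant \overline{p_{i}}$ lies strictly above row $q_{i}$). This is precisely the mechanism producing the descent at $p_{i}-1$ in Proposition \ref{prop:descentsofW}: within a single step the entries increase with position, a later step fills a position $\geqslant p_{i}$ only after its smaller slots are exhausted, and conditions \emph{B1}--\emph{B3} --- together with \emph{C1}$'$ and \emph{C2}$'$ of Lemma \ref{lema:posEntriesStep}, which control exactly which absolute values remain unused --- force the value deposited at or beyond $p_{i}$ to have already climbed past $\overline{q_{i}}$. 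Rather than repeat that analysis, I would reuse the interval-counting bookkeeping from the proof of Proposition \ref{prop:descentsofW}, tracking how many admissible values in the relevant range have been consumed by earlier steps; this accounting is where the real work lies.
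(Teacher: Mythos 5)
Your proposal is correct and follows essentially the same route as the paper: part (1) is obtained from Propositions \ref{prop:descentsofW} and \ref{prop:descentsofWinv} together with the reflection symmetry of Lemma \ref{lema:symmetry}, and parts (2)--(3) by placing each step's dots inside $\llr(p_{i},q_{i})$ via the monotonicity of $\pp$ and $\qq$ and excluding later steps' dots by the value-counting controlled by conditions B1/B2 and Lemma \ref{lema:posEntriesStep}. The only difference is presentational: where you propose to reuse the interval-counting bookkeeping already carried out in the proof of Proposition \ref{prop:descentsofW}, the paper rewrites that count explicitly (setting $\beta=(k_{i}-k_{i-1})-(p_{i-1}-p_{i})$ and splitting into the cases $i<a$, $i=a$, $a<i\leqslant s+1$), which is the same mechanism.
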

\begin{proof}

Lemma \ref{lema:symmetry} says that there is a symmetry between boxes $(q_{i}-1,\overline{p_{i}})$ and their reflection $(\overline{q_{i}}, p_{i}-1)$. Then, it suffices to prove that every $(q_{i}-1,\overline{p_{i}})$ is a SE corner. If $p>0$, then a signed permutation $w$ has a descent at position $p-1$ if and only if $i(w)$ has descents at position $p-1$ and $\overline{p}$. By proposition \ref{prop:descentsofW}, $\iota(w)$ satisfies $\iota(w)(p_{i}-1)>\overline{q_{i}}\geqslant \iota(w)(p_{i})$, and it implies that
\begin{align*}
\iota(w)(\overline{p_{i}})>{q_{i}} -1\geqslant \iota(w)(\overline{p_{i}}+1).
\end{align*}

On the other hand, by Proposition \ref{prop:descentsofWinv}, $\iota(w)^{-1}$ satisfies
\begin{align*}
\iota(w)^{-1}({q_{i}}-1)>\overline{p_{i}}\geqslant \iota(w)^{-1}({q_{i}}),
\end{align*}
for any $i$. This proves that $(q_{i}-1,\overline{p_{i}})$ satisfies Equation \eqref{eq:defconerSn}, which proves item (1).

For item (2), first of all, observe that every entry $x$ placed at position $z$ in Step $(i)$ satisfies $p_{i}\leqslant z$ and $x\leqslant \overline{q_{i}}$, implying that the correspondent dot at box $(\overline{x},\overline{z})$ in the diagram belongs to $\llr(p_{i},q_{i})$.

Now, we need to check that all dots placed by Step $(i)$ are outside $\llr(p_{i-1},q_{i-1})$. It is enough to verify that whenever in Step $(i)$ we are placing an entry $x$ at a position $z\geqslant p_{i-1}$ in the permutation, then $x>\overline{q_{i-1}}$. Set $\beta=(k_{i}-k_{i-1})-(p_{i-1}-p_{i})$ the number of entries to be placed after the position $p_{i-1}$ during Step $(i)$. If $1\leqslant i < a$ then condition B1$'$ implies that $\beta<q_{i-1}-q_{i}$. The entries that will be placed are $\overline{q_{i}+\beta}+1,\dots,\overline{q_{i}}$ and they are all strictly greater than $\overline{q_{i-1}}$ (in the diagram, it is equivalent to say that we have $q_{i-1}-q_{i}$ available rows to place the dots above $q_{i-1}$ but we only need $\beta$ rows). If $i=a$ then by Lemma \ref{lema:posEntriesStep}, $x>0>\overline{q_{i-1}}$.

If $a<i\leqslant s+1$ then condition B2$'$ implies that $\beta<(q_{i-1}-q_{i})-(k_{R(i-i)}-k_{R(i)})$, which means that have $(q_{i-1}-q_{i})-(k_{R(i-1)}-k_{R(i)})$ available rows in the diagram to place the dots above $q_{i-1}$ but we only need $\beta$ rows. Observe that we must skip $k_{R(i-1)}-k_{R(i)}$ rows in the diagram since their reflection have already been used between Steps $(R(i)+1)$ to $(R(i-1))$. This proves item (2).

Finally for (3), $k_{i}$ is the total of dots placed until Step $(i)$ and they are all placed inside the region $\Lambda(p_{i},q_{i})$. Any other dot placed after this step is placed outside $\Lambda(p_{i},q_{i})$.
\end{proof}

If we denote $\ttau$ as the set  $\{(k_{i},p_{i},q_{i}) \tq 1\leqslant i \leqslant s\}$, then Proposition \ref{prop:SEcorners} tell us that $\ttau$ as a subset of corners, i.e., we can denote $\ttau\subset \cnr(w)$.

Remember that there is a poset ``$<$'' in the set of corners $\cnr(w)$ where two corners positions satisfy $(p,q)<(p',q')$ if and only if $p>p'$ and $q<q'$. Also remember that the NE path $\NE(w)\subset\cnr(w)$ is the set of minimal elements of this poset.

\begin{lem}\label{lema:properties}
Let $w=w(\ttau)$ be a theta-vexillary signed permutation, and $\ttau=(\kk,\pp,\qq)$ be a theta-triple. Then every corner position $(p_{i},q_{i})$ of $\ttau$ is minimal in the poset ``$<$'', i.e., $\ttau\subset\NE(w)$.

\end{lem}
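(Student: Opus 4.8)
The plan is to show each $\ttau$-corner $(p_{i},q_{i})$ is minimal by contradiction, first reducing the claim to a single column of the diagram and then analysing that column with the descent data of Propositions \ref{prop:descentsofW} and \ref{prop:descentsofWinv}. By Proposition \ref{prop:SEcorners}(1) we already know $(p_{i},q_{i})\in\cnr(w)$, so suppose it is not minimal: there is a corner $(p',q')\in\cnr(w)$ with $(p',q')<(p_{i},q_{i})$, that is $p'>p_{i}$ and $q'<q_{i}$. The box $(q'-1,\overline{p'})$ is a SE corner of $\iota(w)$, so by \eqref{eq:defconerSn} its column $\overline{p'}$ is a descent column of $\iota(w)$. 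Since $p'>0$, Proposition \ref{prop:descentsofW} (which locates every descent of $w$, and hence of $\iota(w)$ in the negative columns, at the positions $\overline{p_{m}}$) forces $\overline{p'}=\overline{p_{m}}$, i.e. $p'=p_{m}$ for some $m$. Because $\pp$ is weakly decreasing and $p_{m}=p'>p_{i}$, we get $m<i$, whence $q_{m}\geqslant q_{i}>q'$. Thus it suffices to prove that the $\ttau$-corner $(p_{m},q_{m})$ is the \emph{northernmost} corner in its column, i.e. there is no corner $(p_{m},q)$ with $q<q_{m}$; this contradicts the existence of $(p_{m},q')$.

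Next I would spell out, through \eqref{eq:defconerSn}, exactly what such a northern corner $(p_{m},q)$ with $q<q_{m}$ demands. The column ($w$-descent) condition says $\overline{q}$ lies in the jump of the descent at position $p_{m}-1$, which by Proposition \ref{prop:descentsofW} applied to the $\ttau$-corner reads $\overline{q_{m}}<\overline{q}<w(p_{m}-1)$. The row ($w^{-1}$-descent) condition says $q-1$ is a descent of $\iota(w)^{-1}$ whose jump contains the column $\overline{p_{m}}$; unwinding the reflection $w(\overline{\imath})=\overline{w(i)}$, this is equivalent to the combinatorial statement that the value $\overline{q}$ occurs at some position $\geqslant p_{m}$ while the value $\overline{q}+1$ occurs at no position $\geqslant p_{m}$. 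So a northern corner is precisely a value $v=\overline{q}$ with $\overline{q_{m}}<v<w(p_{m}-1)$ that appears at a position $\geqslant p_{m}$ whereas $v+1$ does not.

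Finally I would rule out any such $v$. The mechanism is that the absolute values lying strictly between $\overline{q_{m}}$ and $w(p_{m}-1)$ have already been consumed, \emph{as negatives}, by the earlier steps of the construction algorithm, exactly as the interval $\mathcal{I}_{i}$ is controlled in the proof of Proposition \ref{prop:descentsofW}; consequently a positive value $v$ in that range either fails to appear at a position $\geqslant p_{m}$ or appears together with $v+1$, so the ``new top'' configuration above cannot occur. Here conditions A1 and A2 guarantee that the reflected values do not collide, and conditions B1, B2, B3 bound the spread $p_{m}-p_{m+1}$ against the $\kk$- and $\qq$-gaps so that the jump of the descent at $p_{m}-1$ simply contains no admissible $v$. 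I expect the main obstacle to be precisely this bookkeeping: tracking which values land at positions $\geqslant p_{m}$ across the three regimes $m<a-1$, $m\geqslant a$, and the crossing at the sign-change index $a=a(\ttau)$, while correctly discounting the $k_{R(\cdot)}$ ``skipped'' values when some $q$'s are negative. As an alternative that bypasses re-deriving the placement, one can instead feed the explicit inverse-descent jump intervals furnished by Proposition \ref{prop:descentsofWinv} (together with the reflection symmetry of $\iota(w)^{-1}$) into the row condition and compare them with the column jump at $p_{m}-1$; the B-conditions then show that $\overline{p_{m}}$ falls outside every inverse-descent jump sitting at a row $q-1$ with $q<q_{m}$, giving the same contradiction.
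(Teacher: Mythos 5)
Your first two paragraphs are a valid (and genuinely different) reduction: the paper never localizes to columns, whereas you use Proposition \ref{prop:descentsofW} to force the offending corner into a column $\overline{p_{m}}$ of $\ttau$, and your translation of \eqref{eq:defconerSn} into ``the value $v=\overline{q}$, with $\overline{q_{m}}<v<w(p_{m}-1)$, sits at a position $\geqslant p_{m}$ while $v+1$ does not'' is correct. One repair is needed even here: you must take $m$ \emph{maximal} with $p_{m}=p'$, because when $p_{m}=p_{m+1}$ the statement ``$(p_{m},q_{m})$ is the northernmost corner in its column'' is simply false --- $(p_{m+1},q_{m+1})$ is then a corner of $\ttau$ strictly north of it in the same column. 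The repair is harmless, since $q'<q_{i}\leqslant q_{m}$ also holds for the maximal such $m$.

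The genuine gap is your third paragraph, which is where the lemma actually lives: it is a plan, not a proof, and the mechanism it proposes is wrong as stated. You claim that every absolute value strictly between $\overline{q_{m}}$ and $w(p_{m}-1)$ ``has already been consumed, as a negative, by the earlier steps.'' In the paper's own example $w=10\ 1\ 5\ 3\ \overline{2}\ \overline{4}\ 6\ \overline{9}\ \overline{8}\ \overline{7}$ with $m=2$ (so $p_{2}=6$, $q_{2}=4$, $w(p_{2}-1)=\overline{2}$), the only candidate in the jump is $v=\overline{3}$, whose absolute value $3$ is consumed as a \emph{positive}, by the \emph{later} Step $(4)$, at position $4$; moreover this $v$ is negative, so your dichotomy (phrased only for positive $v$) says nothing about it. Thus the candidates with $0<q<q_{m}$, which occur whenever $m<a$, are not addressed at all, and the ones you do address rest on an unproved positional claim; your own admission that the ``main obstacle'' is the bookkeeping concedes the point --- that bookkeeping \emph{is} the content of the lemma. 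The fallback via Proposition \ref{prop:descentsofWinv} has the same defect: that proposition locates the descents of $\iota(w)^{-1}$ but not the endpoints of their jump intervals, so the assertion that the B-conditions put $\overline{p_{m}}$ outside every such jump is again exactly what has to be proved. For comparison, the paper's proof avoids value-by-value bookkeeping altogether: for a corner $(p,q)<(p_{i},q_{i})$ it considers the Step $(m)$ that fills position $p-1$ and the Step $(l)$ that places the entry $\overline{q}$, and eliminates the three cases $m=l$, $m<l$, $m>l$ using the fact that no step creates a descent among its own entries together with item (2) of Proposition \ref{prop:SEcorners} (the dots of Step $(l)$ lie inside $\llr(p_{l},q_{l})$ and outside $\llr(p_{l-1},q_{l-1})$). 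If you want to keep your column-by-column framing, that case analysis is the missing engine you would need to import.
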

\begin{proof}
Suppose that there is a pair $(p_{i},q_{i})$ of $\ttau$ and a corner position $(p,q)\in\cnr(w)$ such that $(p,q)<(p_{i},q_{i})$, i.e., $p>p_{i}$ and $q<q_{i}$. The pair $(p,q)$ is not in $\ttau$ because $\pp$ and $\qq$ are strictly decreasing $s$-tuples. Since the box $(q-1,\overline{p})$ is a SE corner, Equation \eqref{eq:defconerSn} implies that
\begin{align}\label{eq:lema_minimal}
\overline{q}<x \ \ \mbox{ and }\ \  p\leqslant y,
\end{align}
where $x:=w(p-1)$ and $y:=w^{-1}(\overline{q})$.

When we use the construction algorithm to produce the permutation $w$, observe that the position $p-1$ must be filled by some step and the entry $\overline{q}$ must be placed in some step. So, there must be integers $1\leqslant m,l \leqslant s+1$ such that:
\begin{enumerate}
\item[a)] The entry $x$ is placed in the position $p-1$ during some Step $(m)$. This places a dot at the box $(\overline{x},\overline{p}+1)\in\Lambda(p_{m},q_{m})$;
\item[b)] The entry $\overline{q}$ is placed in the position $y$ during some Step $(l)$. This places a dot at the box $(q,\overline{y})\in\Lambda(p_{l},q_{l})$.
\end{enumerate}

Although there exist such integers $m$ and $i$, we are going to show that they cannot be either equal, smaller or greater than each other. Hence, this contradicts the assumption that $(p_{i},q_{i})$ is not minimal in the poset.

If $m=l$ then, using Equation \eqref{eq:lema_minimal}, $p-1<y$ are positions in Step $(m=l)$ and the entry in such positions are $w(p-1)=x >\overline{q}=w(y)$, i.e., there is a descent in it. This contradicts the fact that there are no descents in a step.

If $m<l$ then, using Equation \eqref{eq:lema_minimal}, we got that $\overline{y}\leqslant \overline{p}$ and $\overline{q} \leqslant x\leqslant \overline{q_{m}}$ (the former relation comes from the fact that every entry placed by Step $(m)$ is weakly smaller than $\overline{q_{m}}$). This implies that the box $(q,\overline{y})$ also belongs to the region $\Lambda(p_{m},q_{m})$, a contradiction of item 2 of Proposition \ref{prop:SEcorners}.

If $m>l$ then observe that Step $(l)$ must fill all positions from $p_{l}$ to $y$ in the  construction algorithm of the permutation $w$. Since $y>p-1\geqslant p_{i}\geqslant p_{l}$ (because $i<l$), we have that the position $p-1$ is also filled by Step $(l)$, which contradicts the fact that it is filled during Step $(m)$.
\end{proof}

Recall that a corner position $(p,q)$ of $\cnr(w)$ is unessential if there are corners $(p_{1},q_{1})$, $(p_{2},q_{2})$ and $(p_{3},q_{3})$ in the NE path $\NE(w)$ such that $(p,q)$ is not a minimal corner in the poset in the upper half of the diagram, the box $(q_{1}-1,\overline{p_{1}})$ lies above and in the same column of the box $(q-1,\overline{p})$, and the box $(\overline{q_{2}},p_{2}-1)$ reflected from $(q_{2}-1,\overline{p_{2}})$ lies to the right and in the same row of $(q-1,\overline{p})$, as in Figure \ref{fig:configuness}.

\begin{prop}\label{prop:svexcore}
Given $w\in \weyl_{n}$,
suppose that the set of corner $\cnr(w)$ is the disjoint union
$$
\cnr(w)=\NE(w)\dot{\cup}\uness(w).
$$
Then $w$ is a theta-vexillary.
\end{prop}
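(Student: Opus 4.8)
The plan is to reverse the forward results of Section 3: read a candidate theta-triple off the northeast path $\NE(w)$ and then show that the construction algorithm applied to it returns $w$. First I would fix the data. List the elements of $\NE(w)$ as $(k_1,p_1,q_1),\dots,(k_s,p_s,q_s)$; since these are pairwise incomparable minimal elements of the poset ``$<$'', they can be ordered so that $p_1\geqslant\cdots\geqslant p_s>0$ and $q_1\geqslant\cdots\geqslant q_s$, exactly as explained after the definition of $\NE(w)$, with $k_i=r_w(p_i,q_i)$. Set $\ttau=(\kk,\pp,\qq)$. The weak monotonicity of $\pp$ and $\qq$ required by \eqref{eq:defvex} is then automatic, and I would establish $0<k_1<\cdots<k_s$ by arguing that two distinct NE corners, ordered this way, carry strictly increasing rank (passing to a corner with smaller $p$ and smaller $q$ strictly increases the number of dots counted by $r_w$).

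The bulk of the work is to verify the eight conditions. Conditions A1--A3 say that $w$ is a genuine signed permutation: A1 and A2 follow from the symmetry of $\cnr(\iota(w))$ in Lemma \ref{lema:symmetry} (a corner with $q_i=0$, or a pair with $q_i=-q_j$, is incompatible with $w(0)=0$ under the reflection $(k,p,q)\mapsto(k,p,q)^{\perp}$), while A3 records that no corner may sit in the forbidden column $p=1,\ q<0$, so that $q_s<0$ forces $p_s>1$. Conditions B1--B3 are the ``gap'' inequalities between consecutive corners. Since each $(q_i-1,\overline{p_i})$ is an honest SE corner, Equation \eqref{eq:defconerSn} together with Propositions \ref{prop:descentsofW} and \ref{prop:descentsofWinv} pins down the descents of $w$ and $w^{-1}$; counting how the rank must climb from $k_i$ to $k_{i+1}$ across the descent at position $p_{i+1}-1$ gives B1 for $i<a-1$, and, in the range $i\geqslant a$ where previously placed (reflected) values must be skipped, produces the extra term $k_{R(i)}-k_{R(i+1)}$ of B2 and its boundary instance B3.

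The crux, and the step I expect to be the main obstacle, is conditions C1 and C2, for this is the only place the full hypothesis $\cnr(w)=\NE(w)\,\dot{\cup}\,\uness(w)$ enters. Using the reformulation of Lemma \ref{lema:posEntriesStep}, C1 and C2 are precisely the sharp thresholds forcing the values in the interval $\{q_i,\dots,\overline{q_i}\}$ to behave correctly. I would argue the contrapositive: if C1 or C2 failed for some $i\geqslant a$, the rank data forced along the path would create an SE corner of $\extDw$ lying strictly above some $(p_i,q_i)$ in the poset, hence not in $\NE(w)$, yet missing one of the two witnessing NE corners $(p_1,q_1)$ or $(p_2,q_2)$ of the configuration in Figure \ref{fig:configuness}, hence not in $\uness(w)$ either. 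Such a corner would lie in $\cnr(w)\setminus(\NE(w)\cup\uness(w))$, contradicting the hypothesis; conversely, the sharpness of C1, C2 is exactly what makes every leftover corner fit the unessential pattern. Translating the purely combinatorial statement ``there are no bad corners'' into these arithmetic inequalities is the delicate part.

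Finally I would recover $w$. Once $\ttau$ is known to be a theta-triple, $w(\ttau)$ is theta-vexillary, and by Proposition \ref{prop:SEcorners} and Lemma \ref{lema:properties} its descents and its rank values at the corners $(p_i,q_i)$ agree with those of $w$, with $\ttau\subseteq\NE(w(\ttau))$. Since the unessential corners are defined purely in terms of NE-path data, $w$ and $w(\ttau)$ share the same $\NE$ and the same $\uness$, hence the same set of corners $\cnr$; as a signed permutation is determined by its set of corners (equivalently, by its rank function), I conclude $w=w(\ttau)$, so $w$ is theta-vexillary.
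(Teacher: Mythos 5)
Your skeleton matches the paper's (read a candidate triple off $\NE(w)$, verify the eight conditions, then argue the construction algorithm returns $w$), but there is a fatal gap exactly at the step you dismiss as routine counting: the triple consisting of \emph{all} of $\NE(w)$ is in general \emph{not} a theta-triple, because condition B2 can fail. When $p_{i}=p_{i+1}$, the dot-counting argument only yields the weak inequality $(p_{i}-p_{i+1})+(q_{i}-q_{i+1})\geqslant (k_{i+1}-k_{i})+(k_{R(i)}-k_{R(i+1)})$, and equality genuinely occurs; the corners producing equality are precisely the optional corners $\op(w)$, which by definition lie on the NE path. The paper's running example already exhibits this: for $w=10\ 1\ 5\ 3\ \overline{2}\ \overline{4}\ 6\ \overline{9}\ \overline{8}\ \overline{7}$ (Example \ref{ex:perm1}), the NE path ordered as you propose is $(3,8,7),(4,6,4),(5,5,2),(6,4,\overline{3}),(7,2,\overline{3}),(9,2,\overline{6})$, and for the consecutive pair $(7,2,\overline{3}),(9,2,\overline{6})$ one computes $(p_{5}-p_{6})+(q_{5}-q_{6})=0+3=3$ while $(k_{6}-k_{5})+(k_{R(5)}-k_{R(6)})=2+1=3$, so the strict inequality B2 fails and your candidate $\ttau$ is not a theta-triple. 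The paper handles this by first proving A1--A3, C1, C2, B1 for the full NE-path triple $\ttau'$, then deleting the index set $I$ of positions where equality holds (this removes exactly the optional corners), and finally recovering strict B2/B3 for the surviving consecutive indices by telescoping the chain of equalities against one strict inequality. Your proposal never removes anything, so it collapses for every $w$ with $\op(w)\neq\varnothing$; identifying C1/C2 as the sole crux misplaces where the hypothesis and the real difficulty sit.

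Two further problems. First, invoking Propositions \ref{prop:descentsofW} and \ref{prop:descentsofWinv} to ``pin down the descents of $w$ and $w^{-1}$'' is circular: those propositions concern permutations already known to be of the form $w(\ttau)$ for a theta-triple $\ttau$, which is what you are trying to prove about $w$; the descent data must instead be read off from the corner condition \eqref{eq:defconerSn} and the hypothesis on $\cnr(w)$. Second, the closing step does not work as stated: once the optional corners are deleted (as they must be), $\ttau$ is a \emph{proper} subset of $\NE(w)$ whenever $\op(w)\neq\varnothing$, so $\ttau\subseteq\NE(w(\ttau))$ does not give that $w$ and $w(\ttau)$ share the same NE path; that identification is essentially the content of Proposition \ref{prop:svexcore3}, i.e.\ $\cnr(w)=\ttau\ \dot{\cup}\ \op(w)\ \dot{\cup}\ \uness(w)$, which is proved \emph{after} and \emph{from} the present proposition. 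The paper instead concludes directly, by observing that the construction algorithm applied to the reduced triple $\ttau$ reproduces the extended diagram of $w$, hence $w(\ttau)=w$.
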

\begin{proof}

Suppose that the set of corners $\cnr(w)$ of a permutation $w$ is given by the disjoint union of the NE path $\NE(w)$ and the set of unessential corners $\uness(w)$. Since all corner positions $(p_{i},q_{i})$ of $\NE(w)$ can be ordered so that $p_{1}\geqslant p_{2}\geqslant \cdots \geqslant p_{r}>0$ and $q_{1}\geqslant q_{2}\geqslant \cdots \geqslant q_{r}$, set $k_{i}$ as the rank $r_{w}(p_{i},q_{i})$ and define the triple $\ttau'=(\kk,\pp,\qq)$. We will prove that $\ttau$ is almost a theta-vexillary triple, i.e., it satisfies A1, A2, A3, C1, C2, and B1. In order to get B2 and B3, occasionally some elements $(k_{i},p_{i},q_{i})$ should be removed from $\ttau'$.

Conditions A1, A2 and A3 are true because $w$ is a signed permutation in $\weyl_{n}$. In fact, A1 and A3 come direct from the fact that there is no SE corner at row $-1$ or above the middle in column $-1$ since $w(0)=0$, and A2 is satisfied just because we cannot have dots lying in opposite rows.

Now, $a$ and $R(i)$, for $a\leqslant i\leqslant s$, can be defined. Let us prove that $\ttau$ satisfies the remaining conditions. Consider the diagrams sketched in Figure \ref{fig:condproofBC}.

\begin{figure}[ht]
	\centering
	\includegraphics[scale=0.8]{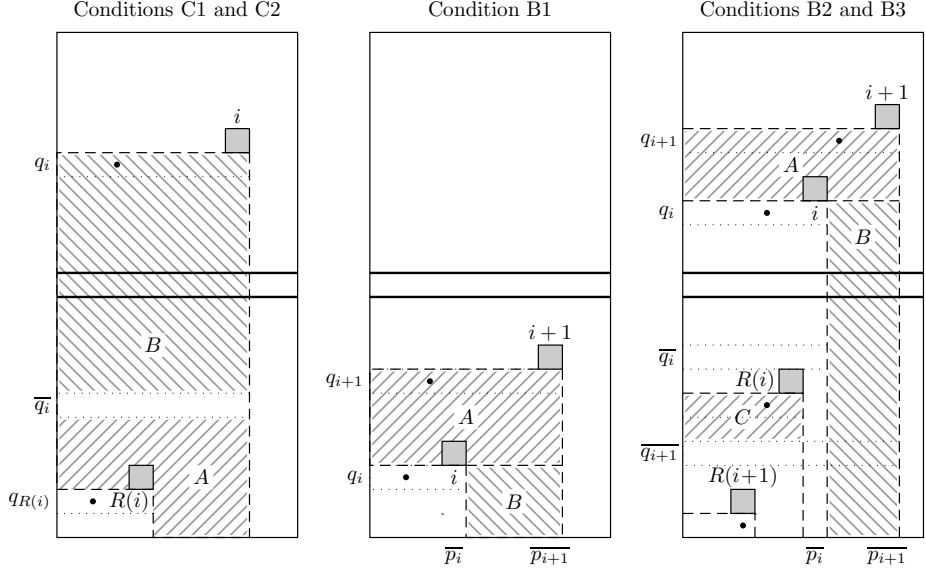}
	\caption{Configuration required to get conditions C1, C2 (left), B1 (middle), B2, and B3 (right).}
	\label{fig:condproofBC}
\end{figure}

For condition C1, let $a\leqslant i\leqslant s$ and consider the regions $A$ and $B$ as in Figure \ref{fig:condproofBC} (left). Denote by $d(A)$ and $d(B)$ the number of dots in each one of them.
The definition of $R(i)$ can be translated to the diagram as follows: $R(i)$ is the unique index smaller than $a$ such that there is no other corner of $\ttau$ lying to the right of it and in the rows $q_{R(i)}-1,\dots, \overline{q_{i}}$.
Suppose that there is a dot in the darker region $A$ of Figure \ref{fig:condproofBC} (left). This dot must be placed by some Step $(j)$, for $j>R(i)$, which implies that the corner position $(p_{j},q_{j})$ is located above the row $\overline{q_{i}}$ and it also places a dot above $\overline{q_{i}}$. However, the construction of a step says that we must fill all entries between them, including $q_{i}$. So, we should have a dot at row $q_{i}$ and another in the row $\overline{q_{i}}$, a contradiction of condition A2. Hence, $d(A)=0$. On the other hand, $d(B)\leqslant -q_{i}$ because condition A2 says that we cannot have dots in opposite rows. Thus, $-q_{i}\geqslant d(A)+d(B)=k_{i}-k_{R(i)}$, since $d(A)+d(B)$ is the amount of dots to be placed from Step $(R(i)+1)$ to $(i)$.

By Lemma \ref{lema:posEntriesStep}, we may show that $\ttau$ satisfies condition C2$'$ instead of C2. In the previous case, we proved that region $A$ contains no dots. It means that no step from $(R(i)+1)$ to $(a-1)$ place dots in $A$, which is equivalent to say that all entries placed by Steps $(R(i)+1)$ to $(a-1)$ are strictly bigger than $q_{i}$, proving C2$'$.

For condition B1, let $1\leqslant i < a-1$ and consider the rectangular regions $A$ and $B$ as in Figure \ref{fig:condproofBC} (middle). Denote by $d(A)$ and $d(B)$ the number of dots in each one of them. Notice that the number of dots in each rectangle is limited by the length of their sides, and $d(A)+d(B)$ is the number of dots in Step $(i)$. If $p_{i}=p_{i+1}$ then $d(B)=0$ and $d(A)<q_{i}-q_{i+1}$, since we cannot place a dot in row ${q_{i}}-1$. So, $(p_{i}-p_{i+1})+(q_{i}-q_{i+1})> d(B)+d(A)=k_{i+1}-k_{i}$. If $p_{i}>p_{i+1}$ then, we cannot have the dot in column $\overline{p_{i}}+1$ inside $B$ because it would not create a SE corner $(q_{i}-1,\overline{p_{i}})$. Hence, $(p_{i}-p_{i+1})+(q_{i}-q_{i+1})> d(B)+d(A)=k_{i+1}-k_{i}$.

For conditions B2 and B3, let $a\leqslant i\leqslant s$ and consider the rectangular regions $A$, $B$ and $C$ of Figure \ref{fig:condproofBC} (right). Suppose that $p_{i}>p_{i+1}$. Using the same argument of condition C1, all dots between the rows $\overline{q_{i}}$ and $\overline{q_{i+1}}$ are in rectangle $C$ and the number of dots in this region is $d(C)=k_{R(i)}-k_{R(i+1)}$. As well as the previous case, $d(B)< p_{i}-p_{i+1}$ and the number of dots in region $A$ is $d(A)\leqslant (q_{i}-q_{i+1})-d(C)$, since we cannot have dots in opposite rows. Therefore, $(p_{i}-p_{i+1})+(q_{i}-q_{i+1})> d(B)+d(A)+d(C)=(k_{i+1}-k_{i})+(k_{R(i)}-k_{R(i+1)})$.

The difficulty appears when $p_{i}=p_{i+1}$. In this case, $d(B)=0$ and $d(A)\leqslant (q_{i}-q_{i+1})-d(C)$. Then, $(p_{i}-p_{i+1})+(q_{i}-q_{i+1})\geqslant d(B)+d(A)+d(C)=(k_{i+1}-k_{i})+(k_{R(i)}-k_{R(i+1)})$, which means that the equality can happen. So, we need to remove these elements from $\ttau'$ where the equality holds. Denote the set of index $I=I_{\ttau'}\subset [1,s]$ by
\begin{align*}
I=\{i\geqslant a\tq 
(p_{i}-p_{i+1})+(q_{i}-q_{i+1})=(k_{i+1}-k_{i})+(k_{R(i)}-k_{R(i+1)})\}
\end{align*}

Define $\ttau$ the triple
\begin{align*}
\ttau := \{(k_{i},p_{i},q_{i})\in \ttau' \tq i\not\in I\}
\end{align*}

Clearly, $\ttau$ satisfies A1, A2, A3, C1, C2, and B1. Suppose that $a\leqslant i<j$ are integers such that $i,j\not\in I$ and $i+1,i+2,\dots, j-1 \in I$, i.e., $i$ and $j$ are consecutive indexes in $\ttau$. Then they satisfy
\begin{gather*}
(p_{i}-p_{i+1})+(q_{i}-q_{i+1}) > (k_{i+1}-k_{i})+(k_{R(i)}-k_{R(i+1)}),\\
(p_{i+1}-p_{i+2})+(q_{i+1}-q_{i+2}) = (k_{i+2}-k_{i+1})+(k_{R(i+1)}-k_{R(i+2)}),\\
(p_{i+2}-p_{i+3})+(q_{i+2}-q_{i+3}) = (k_{i+3}-k_{i+2})+(k_{R(i+2)}-k_{R(i+3)}),\\
\vdots \\
(p_{j-1}-p_{j})+(q_{j-1}-q_{j}) = (k_{j}-k_{j-1})+(k_{R(j-1)}-k_{R(j)}).\\
\end{gather*}

Therefore,
\begin{align*}
(p_{i}-p_{j})+(q_{i}-q_{j}) > (k_{j}-k_{i})+(k_{R(i)}-k_{R(j)}),\\
\end{align*}
and $\ttau$ also satisfies B2 and B3.

Finally, observe that the extended diagram of $w(\ttau)$ is exactly the extended diagram of $w$, which means that $w(\ttau)=w$.
\end{proof}

Now, we aim to proof the converse of Proposition \ref{prop:svexcore}.

\begin{lem}\label{lema:properties2}
Let $w=w(\ttau)$ be a theta-vexillary signed permutation, and $\ttau=(\kk,\pp,\qq)$ be a theta-triple. Then for any $1\leqslant i\leqslant s$ such that $p_{i}>p_{i+1}$, there is no corner position $(p,q)$ different of $(p_{i},q_{i})$ satisfying $p>p_{i+1}$ and $q_{i}\geqslant q$. In other words, $(p_{i},q_{i})$ is the unique SE corner in the region highlighted in Figure \ref{fig:specialSE}.

\begin{figure}[ht]
	\centering
	\includegraphics[scale=0.8]{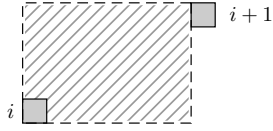}
	\caption{Region in the extended diagram where we cannot have a SE corner.}
	\label{fig:specialSE}
\end{figure}
\end{lem}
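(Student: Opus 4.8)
The plan is to argue by contradiction and reduce the claim to the single assertion that no SE corner can lie strictly above $(q_i-1,\overline{p_i})$ in column $\overline{p_i}$. First I would fix the column. Let $(p,q)\in\cnr(w)$ lie in the highlighted region of Figure \ref{fig:specialSE}, so that $p_{i+1}<p\leqslant p_i$ and $q\leqslant q_i$, and assume $(p,q)\neq(p_i,q_i)$. Because the box $(q-1,\overline p)$ is a SE corner, $w$ has a descent at $p-1$; by Proposition \ref{prop:descentsofW} the descents of $w$ occur exactly at the positions $p_j-1$, whence $p=p_j$ for some $j$. Since $\pp$ is weakly decreasing, $p_j\geqslant p_i$ for $j\leqslant i$ and $p_j\leqslant p_{i+1}$ for $j\geqslant i+1$; as $p_{i+1}<p_j\leqslant p_i$, the only possibility is $p_j=p_i$. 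Thus $p=p_i$, and since $(p,q)\neq(p_i,q_i)$ with $q\leqslant q_i$, we must have $q<q_i$, so the box $(q-1,\overline{p_i})$ would be a SE corner sitting strictly above $(q_i-1,\overline{p_i})$ in the same column.

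To rule this out I would follow the dot-chasing scheme of the proof of Lemma \ref{lema:properties}. Set $x=w(p_i-1)$ and $y=w^{-1}(\overline q)$; as in \eqref{eq:defconerSn} the SE corner condition gives $x>\overline q$ and $y\geqslant p_i$, and the entries $x$ and $\overline q$ are placed, respectively, at position $p_i-1$ in some Step $(m)$ and at position $y$ in some Step $(l)$, creating the dots $(\overline x,\overline{p_i-1})$ and $(q,\overline y)$. Testing these dots against the regions $\llr(p_m,q_m)$ and $\llr(p_l,q_l)$ with Proposition \ref{prop:SEcorners}(2) forces $p_m\leqslant p_i-1<p_i$ and $q_l\leqslant q<q_i$, so both $m$ and $l$ exceed $i$; moreover $m\neq l$, since $m=l$ would place $x>\overline q$ at the smaller position $p_i-1<y$ inside a single step, contradicting that a step creates no inversion.

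The contradiction then comes from the two surviving cases. If $m>l$, then Step $(l)$ fills consecutive available positions starting at $p_l\leqslant p_{i+1}\leqslant p_i-1$ and reaching $y\geqslant p_i$, so it would itself occupy the still-vacant position $p_i-1$ before Step $(m)$ ever could, a contradiction. The case $m<l$ is where the full strength of the defining inequalities enters: the values filling the relevant steps are placed contiguously — by the interval argument behind condition B1 when $i<a$, and by conditions C1 and C2 (equivalently C1$'$, C2$'$ of Lemma \ref{lema:posEntriesStep}) when $i\geqslant a$ — so the entries occupying the two rows $q-1$ and $q$ of column $\overline{p_i}$ appear in the left-to-right order that precludes an inverse descent at row $q-1$, and the higher corner cannot exist. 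Establishing this contiguity obstruction, that is, translating conditions B1 and C1--C2 into the absence of a second SE corner in column $\overline{p_i}$, is the step I expect to be the main difficulty, since it is precisely here that being a theta-triple, rather than merely the descent pattern of $w$, must be used.
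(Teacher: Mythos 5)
Most of your argument is sound and runs parallel to the paper's own proof: the reduction via Proposition \ref{prop:descentsofW} (ruling out columns strictly between $\overline{p_{i}}$ and $\overline{p_{i+1}}$ and forcing $p=p_{i}$, $q<q_{i}$; note the paper's proof treats exactly this same region, bounded on the west by column $\overline{p_{i}}$), the bounds $m>i$ and $l>i$ obtained from Proposition \ref{prop:SEcorners}(2), and the disposal of the cases $m=l$ (an inversion inside one step) and $m>l$ (Step $(l)$ would have to fill the still-vacant position $p_{i}-1$) are all correct. The genuine gap is the case $m<l$: you do not prove it, you only announce that it should follow from a ``contiguity'' translation of conditions B1, C1, C2 and flag it as the expected main difficulty. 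An entire branch of your trichotomy is therefore missing, and the sketch you give for it is not an argument.

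What makes this easy to repair is that the missing case needs no new input from the theta-triple inequalities at all: it is settled by exactly the $m<l$ step of Lemma \ref{lema:properties}, the proof you cite as your model. Indeed, if $m<l$, then every entry placed by Step $(m)$ is at most $\overline{q_{m}}$, so $x\leqslant\overline{q_{m}}$; combined with $x>\overline{q}$ this gives $q>q_{m}$, and since $y\geqslant p_{i}$ and $p_{m}\leqslant p_{i}-1$ we get $\overline{y}\leqslant\overline{p_{i}}<\overline{p_{m}}$. Hence the dot $(q,\overline{y})$ lies inside $\llr(p_{m},q_{m})$. But that dot is placed by Step $(l)$ with $l>m$, and Proposition \ref{prop:SEcorners}(2) puts it outside $\llr(p_{l-1},q_{l-1})\supseteq\llr(p_{m},q_{m})$ --- a contradiction. (So the theta-triple conditions enter only through Propositions \ref{prop:descentsofW} and \ref{prop:SEcorners}, which you already invoke as black boxes; nothing more is needed.) For comparison, the paper itself sidesteps the trichotomy: it argues that the dot in row $q$ lies outside $\llr(p_{i},q_{i})$ and inside $\llr(p_{i+1},q_{i+1})$, hence is placed precisely during Step $(i+1)$; this pins $l=i+1\leqslant m$, so only the two cases you did complete can occur. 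Either repair finishes the proof; as submitted, yours is incomplete.
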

\begin{proof}
Suppose that there is $(p,q)$ for some $i$ such that $p>p_{i+1}$. If $p_{i}>p>p_{i+1}$ then the position $p-1$ is a descent of $w$, which is impossible since all descents of $w$ are at positions $p_{i}-1$ and no one matches to $p-1$.

If $p_{i}=p$ and $q>q_{i}$ then the box $(q-1,\overline{p_{i}})$ is a SE corner, and the dots in row $\overline{q}$ and column $\overline{p_{i}+1}$ are placed as in Figure \ref{fig:prooflema1}.
The dot placed in row $\overline{q}$ lies inside the region $\Lambda(p_{i+1},q_{i+1})$, and outside $\Lambda(p_{i},q_{i})$, implying that such dot is placed during Step $(i+1)$.
\begin{figure}[ht]
	\centering
	\includegraphics[scale=0.8]{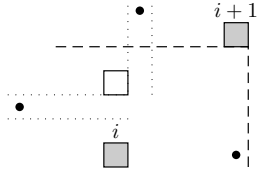}
	\caption{Sketch of the diagram to prove Lemma \ref{lema:properties2}.}
	\label{fig:prooflema1}
\end{figure}

Notice that the dot at column $\overline{p_{i}}+1$ cannot be placed during Step $(i+1)$ because it would create a descent in Step $(i+1)$. Then, there is $j>i+1$ such that  Step $(j)$ placed such dot. In this case, Step $(i+1)$ should skip column $\overline{p_{i}}+1$, which is impossible (by construction, this step places dots in all available columns between $w^{-1}(\overline{q}$ and $p_{i+1}$).
\end{proof}

The NE path also can contain another kind of SE corner defined as follows: given a theta-vexillary signed permutation $w$ and a theta-triple $\ttau$, a corner position $(p,q)\not\in\ttau$ is called \emph{optional} if there are $a\leqslant i\leqslant s$ and $1\leqslant j<a$ such that $p=p_{i}$, $q_{i}<q=\overline{q_{j}}+1$ and $q_{i-1}\geqslant q>q_{i}$. In other words, $(p,q)$ belongs to the NE path just between the corners $(p_{i-1},q_{i-1})$ and $(p_{i},q_{i})$, and the box $(q_{i}-1,\overline{p_{i}})$ lies above and in the same column of $(q-1,\overline{p})$, as shown in Figure \ref{fig:configopt}. Denote by $\op(w)$ the set of all optional corners and observe that $\op(w)\subset\NE(w)$.

\begin{figure}[ht]
	\centering
	\includegraphics[scale=0.8]{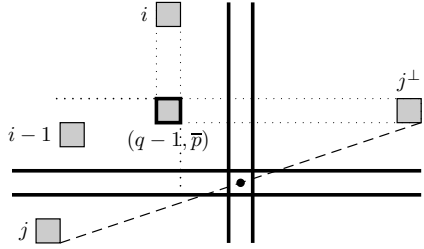}
	\caption{Configuration of an optional corner $(p,q)$.}
	\label{fig:configopt}
\end{figure}

Observe that such box only occurs if the number of available rows between $q_{i}$ and $q$ is smaller than the number of dots to be placed by Step $(i)$, which is $k_{i}-k_{i-1}$. In other words, we need to have enough dots to place during Step $(i)$ such that some of them are placed below the corner $(p,q)$. This implies that the following equation is satisfied:
\begin{align}\label{eq:optional}
q-q_{i}=k_{i}-k+k_{j}-k_{R(i)}.
\end{align}

Thus, a triple $\ttau'$ obtained by adding $(k,p,q)$ to $\ttau$ also gives the same permutation but it is not a theta-triple anymore.

For instance, considering the theta-vexillary signed permutation $w=10\ 1\ 5\allowbreak\ 3\ \overline{2}\ \overline{4}\ 6\ \overline{9}\ \overline{8}\ \overline{7}$ of Figure \ref{fig:diagram1}, the set of optional corners $\op(w)$ only contains the triple $(7,2,\overline{3})$.

\begin{prop}\label{prop:svexcore3}
Let $w$ be a theta-vexillary and $\ttau$ be a theta-triple. Then, the set of corners is the disjoint union
$$
\cnr(w)=\ttau \ \dot{\cup}\ \op(w) \ \dot{\cup}\ \uness(w).
$$
\end{prop}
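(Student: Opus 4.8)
The plan is to show that the three sets on the right-hand side are pairwise disjoint and that together they exhaust $\cnr(w)$. The disjointness is essentially built into the definitions: by Lemma \ref{lema:properties} every element of $\ttau$ lies in $\NE(w)$, while an optional corner $(p,q)$ is required to satisfy $q\neq q_i$ for the relevant index, so $\op(w)\cap\ttau=\emptyset$; similarly an unessential corner is by definition \emph{not} minimal in the upper-half poset, whereas both $\ttau\subset\NE(w)$ (Lemma \ref{lema:properties}) and $\op(w)\subset\NE(w)$ consist of minimal corners, giving $\uness(w)\cap(\ttau\cup\op(w))=\emptyset$. So the real content is the exhaustion: every SE corner of the extended diagram is accounted for by exactly one of these three types.

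First I would take an arbitrary corner position $(p,q)\in\cnr(w)$ and split into the two cases dictated by the poset: either $(p,q)$ is minimal (so it lies in $\NE(w)$) or it is not. In the non-minimal case my goal is to prove $(p,q)\in\uness(w)$, i.e.\ to produce the three witnessing corners $(p_1,q_1),(p_2,q_2),(p_3,q_3)$ in $\NE(w)$ from the definition. Since $(p,q)$ is non-minimal, by Lemma \ref{lema:properties2} it cannot sit in the forbidden region attached to any $(p_i,q_i)$ with $p_i>p_{i+1}$; I would use this together with Proposition \ref{prop:descentsofW} (descents of $w$ are exactly at the $p_i-1$) to force $p=p_i$ for some $i\geqslant a$, which hands me the vertical witness $(p_1,q_1)=(p_i,q_i)$ lying above $(q-1,\overline p)$ in the same column. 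The horizontal witness comes from the descent structure of $w^{-1}$ in Proposition \ref{prop:descentsofWinv}: the box to the right forces the reflected corner $(q_2-1,\overline{p_2})$ with $q_2=\overline q+1$, i.e.\ $(p_2,q_2)$ is a genuine NE corner coming from some $j<a$. The third witness, an actual lower-left corner, exists precisely because $(p,q)$ was assumed non-minimal. The minimal case then splits further: if $(p,q)=(p_i,q_i)$ for some $i$ it is in $\ttau$, and otherwise I claim it must be optional, which is where the counting identity \eqref{eq:optional} enters.

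The main obstacle I expect is the minimal, non-$\ttau$ subcase, i.e.\ proving that a minimal corner not already in $\ttau$ is forced to be optional. Here I would argue by tracking the construction algorithm: since $(p,q)$ is a SE corner with a descent of $w$ at $p-1$, Proposition \ref{prop:descentsofW} forces $p=p_i$ for some $i$, and minimality together with $q\neq q_i$ forces $q=\overline{q_j}+1$ for some $j<a$ (the entry $\overline q$ placed at the column to the right must itself be a corner value, by Proposition \ref{prop:descentsofWinv}). The delicate point is to verify the squeezing inequality $q_{i-1}\geqslant q>q_i$, which amounts to checking that such a corner can only appear strictly between two consecutive $\ttau$-corners sharing the same $p$-coordinate; this is exactly the situation where Step $(i)$ of the algorithm, having $k_i-k_{i-1}$ dots to place but too few rows between $q_i$ and $q$, overflows upward and creates the extra corner, yielding the count \eqref{eq:optional}. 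I would make this precise by comparing the number of dots inside $\llr(p_i,q_i)$ (which is $k_i$ by Proposition \ref{prop:SEcorners}(3)) against the number $k$ assigned to $(p,q)$ as a rank value.

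Finally, I would close the argument by confirming the decomposition is into the claimed \emph{disjoint} pieces with nothing left over: every $(p,q)\in\cnr(w)$ has been placed into $\ttau$, $\op(w)$, or $\uness(w)$, and the pairwise disjointness established at the outset guarantees the union is direct. A small bookkeeping remark I would include is that the excluded corner positions with $p=1,\ q<0$ (noted in the definition of $\cnr(w)$) never arise here because $w(0)=0$, so they do not interfere with the casework above.
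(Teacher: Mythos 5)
Your skeleton (disjointness is easy; exhaustion splits into ``non-minimal $\Rightarrow$ unessential'' and ``minimal, not in $\ttau$ $\Rightarrow$ optional'') is a faithful reformulation of what must be proved, and your disjointness argument matches the paper's. The genuine gap is in the exhaustion step, where you attribute the existence of the required witnesses to Propositions \ref{prop:descentsofW} and \ref{prop:descentsofWinv} together with Lemma \ref{lema:properties2}. Those propositions only \emph{localize} a corner $(p,q)$: they give $p=p_{m}$ for some $m$, and $q=q_{i}$ for some $i$ or $q=\overline{q_{j}}+1$ for some $j<a$. They do not decide which alternative occurs, and the entire difficulty of the proposition lives in that ambiguity. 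Concretely, for a non-minimal corner you claim that ``the box to the right forces the reflected corner $(q_{2}-1,\overline{p_{2}})$ with $q_{2}=\overline{q}+1$,'' but Proposition \ref{prop:descentsofWinv} equally allows the row descent to come from a quadrant-\textbf{B} corner of $\ttau$, i.e.\ $q=q_{i}$ with $i\geqslant a$, in which case the dot in row $q-1$ sits in a negative column and no reflected corner is available a priori. Showing that in this situation either the corner cannot exist or the dot in row $q-1$ must be the reflection of a dot placed by some Step $(l)$ with $l<a$ --- which is what forces $q_{l}=\overline{q}+1$ and produces the witness --- is exactly the construction-algorithm argument of the paper's shape \shape{B}{\beta}{B}, and nothing in your sketch supplies it. The same issue affects your claims that a non-minimal corner must have $q<0$ and $p=p_{i}$ for some $i\geqslant a$ with $q_{i}<q$, and that a minimal corner outside $\ttau$ must have $q=\overline{q_{j}}+1$: all of these are true, but they are \emph{conclusions} of the case analysis, not consequences of the cited descent statements.

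What the paper actually does, and what your plan would have to absorb, is a classification of all boxes that can simultaneously carry a row and a column descent: every corner outside $\iota(\ttau)$ is a ``cross descent'' box of two corners of $\iota(\ttau)$, and the proof enumerates the twelve possible shapes \shape{X}{\xi}{Y} (after the symmetry of Lemma \ref{lema:symmetry}), disposing of each either by Lemma \ref{lema:properties2}, or by a dot-placement contradiction, or by exhibiting the optional/unessential witnesses. Your minimality dichotomy could be wrapped around that analysis as an organizing device, but it does not replace it; as written, the two central implications of your plan are asserted rather than proved.
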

\begin{proof}

Denote by $\iota(\ttau)\subset \cnr(\iota(w))$ the set of all corner positions of $\ttau$ and their reflections, i.e.,
\begin{align*}
\iota(\ttau)=\bigcup_{i=1}^{s}\{(k_{i},p_{i},q_{i}) \cup (k_{i},p_{i},q_{i})^{\perp}\}.
\end{align*}
 
Propositions \ref{prop:descentsofW} and \ref{prop:descentsofWinv} state that all descents of $w$ and $w^{-1}$ are exclusively determined by elements in $\ttau$. More over, this assertion can be extended to the diagram $D(\iota(w)))$ of $\iota(w)$: all descents of $\iota(w)$ are at position $p_{i}-1$ and $\overline{p_{i}}$, and all descents of $\iota(w)^{-1}$ are at position $q_{i}-1$ and $\overline{q_{i}}$, where $i$ ranges from $1$ to $s$. Thus, if there is other SE corner, it should not create descents, but it must match existing descents. For instance, for $\ttau$ of Example \ref{ex:perm1} and its diagram in Figure \ref{fig:diagram1}, observe that the corner position $(2,\overline{3})$ does not belong to $ \iota(\ttau)$ but it is in the same row and column of two corner positions of $\iota(\ttau)$, namely, $(4,\overline{3})$ and $(2,\overline{6})$.

We conclude that if there exists a corner position $T$ that does not belong to $\iota(\ttau)$ then there are corner positions $T',T''$ in $ \iota(\ttau)$ such that $T'$ is in the same column of $T$, and $T''$ is in the same row of $T$.
Then,  we need to figure out when a combination of descents of corners $T'$ and $T''$ in $\iota(\ttau)$ a new corner.

Consider the diagram $D(\iota(w))$ divided in quadrants as in Figure \ref{fig:quadrant}. 

\begin{figure}[ht]
	\centering
	\includegraphics[scale=0.8]{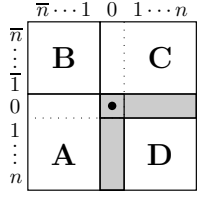}
	\caption{Quadrants of the diagram.}
	\label{fig:quadrant}
\end{figure}

Given $(p,q)\in\iota(\ttau)$, we say that the SE corner $(q-1,\overline{p})$ belongs to:
\begin{itemize}
\item Quadrant \textbf{A} if $(p,q)=(p_{i},q_{i})$ for some $i<a$;
\item Quadrant \textbf{B} if $(p,q)=(p_{i},q_{i})$ for some $i\geqslant a$;
\item Quadrant \textbf{C} if $(p,q)=(p_{i},q_{i})^{\perp}$ for some $i< a$;
\item Quadrant \textbf{D} if $(p,q)=(p_{i},q_{i})^{\perp}$ for some $i\geqslant a$.
\end{itemize}

Consider two corner positions $T'=(p',q')$ and $T''=(p'',q'')$ in $\iota(\ttau)$ such that $p'>p''$ and $q'\neq q''$, i.e., $T'$ and $T''$ are in different rows and columns. A \emph{cross descent} is a box that lies in the same row of one of these corners (either $T'$ or $T''$) and in the same column of the remaining one. There are four types of cross descent boxes of $T'$ and $T''$ as shown in Figure \ref{fig:crossing}.
\begin{figure}[ht]
	\centering
	\includegraphics[scale=0.8]{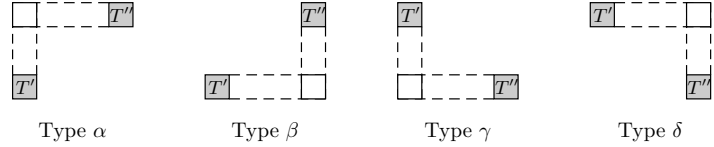}
	\caption{Four possibles cross descents boxes.}
	\label{fig:crossing}
\end{figure}

Namely, 
\begin{itemize}
\item \emph{A cross descent of type $\alpha$} is the box $(q''-1,\overline{p'})$ when $q'>q''$;
\item \emph{A cross descent of type $\beta$} is the box $(q'-1,\overline{p''})$ when $q'>q''$;
\item \emph{A cross descent of type $\gamma$} is the box $(q''-1,\overline{p'})$ when $q'<q''$;
\item \emph{A cross descent of type $\delta$} is the box $(q'-1,\overline{p''})$ when $q'<q''$.
\end{itemize}

Suppose that $T'=(p',q')$ and $T''=(p'',q'')$ are two corners of $\iota(\tau)$. Consider that $T'$ lies in some quadrant $\mathbf{X}$, $T''$ lies in some quadrant $\mathbf{Y}$, and they have cross descent box $(a,b)$ of type $\xi$, where $\mathbf{X},\mathbf{Y}\in \{\mathbf{A},\mathbf{B},\mathbf{C},\mathbf{D}\}$  and $\xi\in\{\alpha,\beta,\gamma,\delta\}$. We say that this configuration has \emph{shape} \shape{X}{\xi}{Y}. Also denote by $c_{\xi}(T',T'')=(a,b)$ the respective cross descent box. 

First of all, we need to figure out all possible shapes and, then, verify if the cross descent box of each shapes is an SE corner.

There are $64$ different combination of shapes \shape{X}{\xi}{Y}, where $\mathbf{X},\mathbf{Y}\in \{\mathbf{A},\mathbf{B},\mathbf{C},\mathbf{D}\}$  and $\xi\in\{\alpha,\beta,\gamma,\delta\}$. However, not every shape is possible because $\ttau$ is a theta-triple and $T',T''$ are chosen in $\iota(\ttau)$. An example of impossible shape is \shape{A}{\delta}{A} since, by definition, there is no $i<j$ where $T'=(p_{i},q_{i})$ and $T''=(p_{j},q_{j})$ such that $q_{i}<q_{j}$. Thus, it remains only $24$ possible shapes. We listed them in Table \ref{tab:shapes}, divided in two categories: the shapes $\shape{X}{\xi}{Y}$ where the cross descent box $c_{\xi}(T',T'')$ belongs to the quadrants $\mathbf{A}$ or $\mathbf{B}$, and the shapes $\shape{X}{\xi}{Y}$ where $c_{\xi}(T',T'')$ belongs to the quadrants $\mathbf{C}$ of $\mathbf{D}$.

\begin{table}[ht]
\centering
\caption{Possible shapes}
\label{tab:shapes}
\begin{tabular}{|c|c|}
\hline
Shapes $\shape{X}{\xi}{Y}$ where $c_{\xi}(T',T'')$ & Shapes $\shape{X}{\xi}{Y}$ where $c_{\xi}(T',T'')$ \\
belongs to \textbf{A} or \textbf{B}: & belongs to \textbf{C} or \textbf{D}: \\
\hline
\shape{A}{\alpha}{A}, \shape{A}{\alpha}{B}, \shape{A}{\alpha}{C}, &
\shape{C}{\beta}{C}, \shape{D}{\beta}{C}, \shape{A}{\beta}{C}, \\
\shape{A}{\alpha}{D}, \shape{B}{\alpha}{B}, \shape{B}{\alpha}{C},&
\shape{B}{\beta}{C}, \shape{D}{\beta}{D}, \shape{A}{\beta}{D},\\
\shape{A}{\beta}{A}, \shape{A}{\beta}{B}, \shape{B}{\beta}{B}, &
\shape{C}{\alpha}{C}, \shape{D}{\alpha}{C}, \shape{D}{\alpha}{D},\\
\shape{A}{\gamma}{D}, \shape{B}{\gamma}{C}, \shape{B}{\gamma}{D}. &
\shape{B}{\delta}{C}, \shape{A}{\delta}{D}, \shape{B}{\delta}{D}.\\ 
\hline
\end{tabular}
\end{table}

Observe that if $c_{\xi}(T',T'')$ belongs to quadrants \textbf{C} or \textbf{D} then its reflection $c_{\xi}(T',T'')^{\perp}$ belongs to quadrant \textbf{A} or \textbf{B} and corresponds to the cross descent box of corners $(T'')^{\perp}=(\overline{p''}+1,\overline{q''}+1)$ and $(T')^{\perp}=(\overline{p'}+1,\overline{q'}+1)$. In other words, each shape in the left column of Table \ref{tab:shapes} is equivalent to another one to the right column. Hence, we can consider only the $12$ shapes where $c_{\xi}(T',T'')$ belongs to quadrants \textbf{A} or \textbf{B}.

It follows from Lemma \ref{lema:properties} that  $\ttau\cap\uness(w)=\varnothing$ because no unessential corner is minimal in the poset. By definition of optional corner, we also have that $\ttau\cap\op(w)=\varnothing$ and $\op(w) \cap \uness(w)=\varnothing$. Then, all the sets are disjoint.

Suppose that $T'=(p',q')$ and $T''=(p'',q'')$ of $\iota(\tau)$ has some shape \shape{X}{\xi}{Y}, where $\mathbf{X,Y}\in \{\mathbf{A,B,C,D}\}$ and $\xi\in\{\alpha,\beta,\gamma,\delta\}$, such that the cross descent box $c_{\xi}(T',T'')$ is a SE corner in quadrant \textbf{A} or \textbf{B} which \emph{does not} belongs to $\ttau$.
Then, analyzing each situation in the first column of Table \ref{tab:shapes}, we must show that either $c_{\xi}(T',T'')\in\ \op(w) \ \dot{\cup}\ \uness(w)$ or it leads us to a contradiction.

Consider $\xi=\alpha$, where $p'>p''$, $q'>q''$, and $T=(p',q'')$ is a SE corner $(q''-1,\overline{p'})$ not in $\ttau$ and satisfying the following conditions
\begin{align}\label{eqn:SEcorneralpha}
\begin{split}
\iota(w)(p'-1)>\overline{q''} &\geqslant \iota(w)(p')\\
\iota(w)^{-1}(\overline{q''})>p'-1 &\geqslant \iota(w)^{-1}(\overline{q''}+1).
\end{split}
\end{align}

\begin{itemize}
\item If \shape{X}{\xi}{Y} is a shape \shape{A}{\alpha}{A}, \shape{A}{\alpha}{B} or \shape{B}{\alpha}{B}, then $T'=(p_{i},q_{i})$, $T''=(p_{j},q_{j})$, where $1\leqslant i<j\leqslant s$, and $T=(p_{i},q_{j})$ is a SE corner $(q_{j}-1,\overline{p_{i}})$. But Lemma \ref{lema:properties2} says that $T$ cannot be a corner.

\item If \shape{X}{\xi}{Y} is a shape \shape{A}{\alpha}{C} or \shape{B}{\alpha}{C}, then $T'=(p_{i},q_{i})$, for some $i$, $T''=(p_{j},q_{j})^{\perp}=(\overline{p_{j}}+1,\overline{q_{j}}+1)$, for some $j<a$, and $q_{i}>\overline{q_{j}}+1$.
We can assume that $i$ is chosen such that there is no $l>i$ satisfying $p_{i}=p_{l}$ and $q_{i}>q_{l}>\overline{q_{j}}+1$, i.e., there is no corner of $\ttau$ in the same column and between the SE corners $T'$ and $T$. If $p_{i}>p_{i+1}$ then Lemma \ref{lema:properties2} is contradicted. Thus, we have that $p_{i}=p_{i+1}$ and $q_{i}>\overline{q_{j}}+1>q_{i+1}$, implying that $T$ is an optional SE corner.

\item If \shape{X}{\xi}{Y} is a shape \shape{A}{\alpha}{D}, then $T'=(p_{i},q_{i})$, for some $i<a$, $T''=(p_{j},q_{j})^{\perp}=(\overline{p_{j}}+1,\overline{q_{j}}+1)$, for some $a\leqslant j\leqslant s$, and $q_{i}>\overline{q_{j}}+1$. As in the previous case, we can assume that $i$ is chosen such that there is no $l>i$ satisfying $p_{i}=p_{l}$ and $q_{i}>q_{l}>\overline{q_{j}}+1$, i.e., there is no corner of $\ttau$ in the same column and between the SE corners $T'$ and $T$. If 
$p_{i}>p_{i+1}$, then the corner $T$ contradict Lemma \ref{lema:properties2}. Thus, $p_{i}=p_{i+1}$, $q_{i}> \overline{q_{j}}+1> q_{i+1}$ and $i=R(j)$. Notice that the dot in the row $\overline{q_{j}}+1$ is between rows $q_{i}$ and $\overline{q_{j}}$ since $T$ is a corner, which is impossible as shown in proof of Proposition \ref{prop:svexcore} (see Figure \ref{fig:proofprop2}).
\begin{figure}[ht]
	\centering
	\includegraphics[scale=0.8]{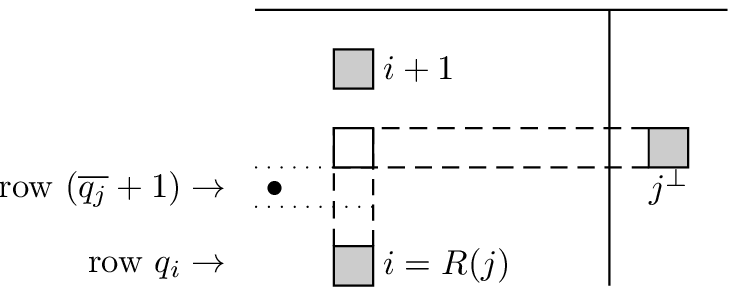}
	\caption{Sketch of the diagram of shape \shape{A}{\alpha}{D}.}
	\label{fig:proofprop2}
\end{figure}
\end{itemize}

Consider $\xi=\beta$, where $p'>p''$, $q'>q''$, and $T=(p'',q')$ is a SE corner $(q'-1,\overline{p''})$ is a SE corner not in $\ttau$ and satisfying the following conditions
\begin{align}\label{eqn:SEcornerbeta}
\begin{split}
\iota(w)(p''-1)>\overline{q'} &\geqslant \iota(w)(p'')\\
\iota(w)^{-1}(\overline{q'})>p''-1 &\geqslant \iota(w)^{-1}(\overline{q'}+1).
\end{split}
\end{align}

\begin{itemize}
\item If \shape{X}{\xi}{Y} is a shape \shape{A}{\beta}{A} or \shape{A}{\beta}{B}, then $T'=(p_{i},q_{i})$ and $T''=(p_{j},q_{j})$, for some $i<a$ and $i<j$.
Observe that the dots at column $\overline{p_{j}}$ and row $q_{j}$ are placed by Step $(j)$ (or some previous one). Then, by construction, the dot at row $q_{i}-1$ must be placed by some Step $(l)$ for $l\leqslant j$. Thus, $\iota(w)^{-1}(q_{i}-1)\geqslant \overline{p_{j}}$, a contradiction of Equation \eqref{eqn:SEcornerbeta}.

\item If \shape{X}{\xi}{Y} is a shape \shape{B}{\beta}{B}, then $T'=(p_{i},q_{i})$ and $T''=(p_{j},q_{j})$, for some $a\leqslant i<j\leqslant s$.
If $\iota(w)^{-1}({q_{i}}-1)<0$, i.e., the dot in the row $q_{i}-1$ is in quadrant \textbf{B} then we can proceed as the previous case. If $\iota(w)^{-1}({q_{i}}-1)>0$ then 
belongs to the quadrant \textbf{C} is a reflection of a dot placed during some Step $(l)$ for $l<a$. Since $\iota(w)(q_{i})<\overline{p_{j}}+1\leqslant 0$, then $q_{l}=\overline{q_{i}}+1$ and the corner $(p_{l},q_{l})$ lies in row $\overline{q_{i}}+1$. Therefore, the reflection $(p_{l},q_{l})^{\perp}$ is in the row $q_{i}-1$, and the corner $T$ is optional or unessential (see Figure \ref{fig:proofprop4}).
\begin{figure}[ht]
	\centering
	\includegraphics[scale=0.8]{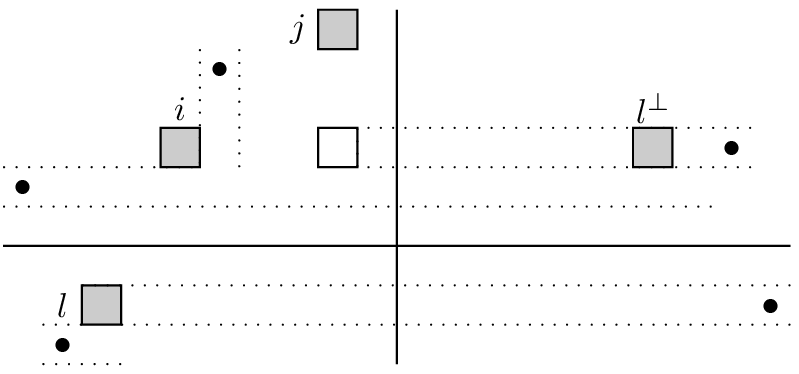}
	\caption{Sketch of the diagram of shape \shape{B}{\beta}{B}.}
	\label{fig:proofprop4}
\end{figure}
\end{itemize}

Consider $\xi=\gamma$, where $p'>p''$, $q'<q''$, and $T=(p',q'')$ is a SE corner $(q''-1,\overline{p'})$ is a SE corner not in $\ttau$ and satisfying the following conditions
\begin{align}\label{eqn:SEcornergamma}
\begin{split}
\iota(w)(p'-1)>\overline{q''} &\geqslant \iota(w)(p')\\
\iota(w)^{-1}(\overline{q''})>p'-1 &\geqslant \iota(w)^{-1}(\overline{q''}+1).
\end{split}
\end{align}

\begin{itemize}
\item If \shape{X}{\xi}{Y} is a shape \shape{A}{\gamma}{D} or \shape{B}{\gamma}{D}, then $T'=(p_{i},q_{i})$, for any $i$, $T''=(p_{j},q_{j})^{\perp}=(\overline{p_{j}}+1,\overline{q_{j}}+1)$, for some $a\leqslant j\leqslant s$, and $q_{i}<\overline{q_{j}}+1$.
By Equation \eqref{eqn:SEcornergamma}, $q_{j}-1 \geqslant \iota(w)(p_{i})$ and $\iota(w)^{-1}(q_{j}-1)>p_{i}-1\geqslant 0> \iota(w)^{-1}(q_{j})$, implying that no Step $(l)$, for $l<a$, can place the dot at row $\overline{q_{j}}$. Hence, $q_{R(j)}=\overline{q_{j}}+1$ and $T$ is exactly the corner $(p_{R(j)},q_{R(j)})$ of $\ttau$ (see Figure \ref{fig:proofprop3}).
\begin{figure}[ht]
	\centering
	\includegraphics[scale=0.8]{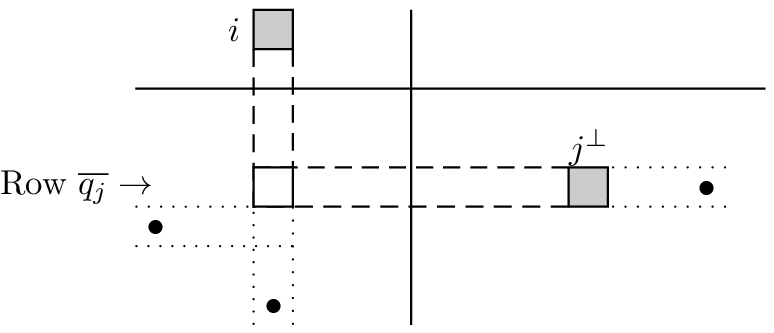}
	\caption{Sketch of the diagram of shape \shape{A}{\gamma}{D} or \shape{B}{\gamma}{D}.}
	\label{fig:proofprop3}
\end{figure}

\item If \shape{X}{\xi}{Y} is a shape \shape{B}{\gamma}{C}, then we clearly have that $T$ is an unessential or optional corner. \qedhere
\end{itemize}
\end{proof}

\begin{prop}\label{prop:svexcore2}
For $w\in \weyl_{n}$, $w$ is theta-vexillary if and only if the set of corner $\cnr(w)$ is the disjoint union
$$
\cnr(w)=\NE(w)\dot{\cup}\uness(w).
$$
\end{prop}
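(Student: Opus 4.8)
The plan is to derive the biconditional directly from the two propositions already in hand, so that essentially no new combinatorial work is required. The backward implication is immediate: if $\cnr(w)=\NE(w)\dot{\cup}\uness(w)$, then Proposition \ref{prop:svexcore} applies verbatim and yields that $w$ is theta-vexillary.

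For the forward implication, I would begin with a theta-vexillary $w$ and fix a theta-triple $\ttau$ with $w=w(\ttau)$. Proposition \ref{prop:svexcore3} then supplies the finer decomposition $\cnr(w)=\ttau\dot{\cup}\op(w)\dot{\cup}\uness(w)$, and the whole task collapses to proving the set identity $\NE(w)=\ttau\dot{\cup}\op(w)$. One inclusion is recorded earlier: Lemma \ref{lema:properties} gives $\ttau\subset\NE(w)$, and the observation following the definition of optional corners gives $\op(w)\subset\NE(w)$, so $\ttau\dot{\cup}\op(w)\subseteq\NE(w)$. For the opposite inclusion, I would use that no unessential corner is minimal in the poset ``$<$'' (this is built into the definition of unessentiality), hence $\NE(w)\cap\uness(w)=\varnothing$; since $\NE(w)\subseteq\cnr(w)$, the disjointness in Proposition \ref{prop:svexcore3} forces $\NE(w)\subseteq\ttau\dot{\cup}\op(w)$. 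Combining the two inclusions gives $\NE(w)=\ttau\dot{\cup}\op(w)$, and substituting this back into the decomposition of Proposition \ref{prop:svexcore3} produces $\cnr(w)=\NE(w)\dot{\cup}\uness(w)$, as desired.

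I do not anticipate a genuine obstacle here, since the geometric and combinatorial heavy lifting has already been carried out in Propositions \ref{prop:svexcore} and \ref{prop:svexcore3}. The only point requiring care is the equality $\NE(w)=\ttau\dot{\cup}\op(w)$: one must verify both that the unessential corners are genuinely excluded from the NE path, so that the reverse inclusion is clean, and that $\ttau$ together with $\op(w)$ exhaust the minimal corners. Both facts follow from the definitions and from the disjointness already established in Proposition \ref{prop:svexcore3}, so the argument reduces to bookkeeping among the three subsets $\ttau$, $\op(w)$, and $\uness(w)$.
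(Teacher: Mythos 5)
Your proposal is correct and follows essentially the same route as the paper: the backward direction is Proposition \ref{prop:svexcore}, and the forward direction combines Lemma \ref{lema:properties}, the observation $\op(w)\subset\NE(w)$, and the decomposition of Proposition \ref{prop:svexcore3} together with $\NE(w)\cap\uness(w)=\varnothing$ to conclude $\NE(w)=\ttau\,\dot{\cup}\,\op(w)$. If anything, your write-up is slightly more careful than the paper's, which attributes the full inclusion $\ttau\cup\op(w)\subset\NE(w)$ to Lemma \ref{lema:properties} alone, whereas the $\op(w)$ part really comes from the remark following the definition of optional corners, as you note.
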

\begin{proof}
Suppose that $w(\ttau)$ is theta-vexillary. From Lemma \ref{lema:properties}, $\ttau\cup\op(w)\subset\NE(w)$. On the other hand, Proposition \ref{prop:svexcore3} implies that $\NE(w)\subset\ttau\cup\op(w)$ since $\NE(w)\cap\uness(w)=\varnothing$. Hence, $\NE(w)= \ttau\cup\op(w)$.
\end{proof}

\begin{rem}
If $w$ is a theta-vexillary signed permutation but we don't know a theta-triple such that $w=w(\ttau)$, we can use the process in the proof of Proposition \ref{prop:svexcore} to get $\ttau$. Basically, set $\ttau$ with all the corners in the NE path $\NE(w)$. Then, withdraw all the optional corners from it, which results in a valid theta-triple $\ttau$ of $w$.
\end{rem}

\begin{prop}\label{prop:unique}
The theta-triple is unique for each theta-vexillary signed permutation.
\end{prop}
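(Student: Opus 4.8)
The plan is to show that any theta-triple $\ttau$ producing $w$ is recovered from $w$ by the very same canonical procedure that appears in the proof of Proposition \ref{prop:svexcore}, so that it cannot depend on the choice of $\ttau$. First I would record that, by Lemma \ref{lema:properties} and Propositions \ref{prop:svexcore2} and \ref{prop:svexcore3}, every theta-triple $\ttau$ with $w(\ttau)=w$ satisfies $\ttau\subseteq\NE(w)$ and $\NE(w)=\ttau\,\dot{\cup}\,\op(w)$. Since $\NE(w)$ is the set of minimal elements of $\cnr(w)$ and therefore depends only on $w$, it suffices to prove that the subset $\op(w)\subseteq\NE(w)$ of optional corners is itself determined by $w$ alone, independently of $\ttau$; then $\ttau=\NE(w)\setminus\op(w)$ is forced.

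Next I would make the recovery explicit. List the corners of $\NE(w)$ together with their ranks as a candidate triple $\ttau'=(\kk,\pp,\qq)$, ordered so that $p_{1}\geqslant\cdots\geqslant p_{r}>0$ and $q_{1}\geqslant\cdots\geqslant q_{r}$ (ties being broken, as usual, by the other coordinate), and from these coordinates form the integer $a$, the functions $R(\cdot)$, and the index set $I=\{i\geqslant a \tq (p_{i}-p_{i+1})+(q_{i}-q_{i+1})=(k_{i+1}-k_{i})+(k_{R(i)}-k_{R(i+1)})\}$ exactly as in the proof of Proposition \ref{prop:svexcore}. Every ingredient here --- the corner positions, the ranks $k_{i}=r_{w}(p_{i},q_{i})$, the value $a$, and the functions $R(i)$ --- is computed from $\cnr(w)$, hence depends only on $w$. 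I would then prove that $\op(w)$ is precisely the set of corners of $\NE(w)$ whose index lies in $I$. This is the heart of the matter and splits into two inclusions: an optional corner forces the corresponding B2-type relation to hold with equality, which is the content of Equation \eqref{eq:optional} (the rank of an optional corner is pinned down by the corner directly below it in its column and the reflected corner in its row), whereas a genuine corner of $\ttau$ must give a strict inequality. For the latter I would invoke the telescoping identity already derived in the proof of Proposition \ref{prop:svexcore}: if $i<j$ are consecutive indices of $\ttau$ inside the full list, summing the single strict inequality at $i$ against the equalities forced at the interspersed optional indices $i+1,\dots,j-1$ reproduces the strict B2/B3 relation for $\ttau$, so no genuine index can itself satisfy the equality.

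With $\op(w)=\{\,\text{corners indexed by }I\,\}$ established, the set $I$ --- and hence $\op(w)$ --- is a function of the intrinsic data of $\NE(w)$, so it is the same for every theta-triple representing $w$. Consequently $\ttau=\NE(w)\setminus\op(w)=\ttau'\setminus\{\text{index }I\}$ no matter which $\ttau$ we began with, which is exactly the uniqueness we want.

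I expect the main obstacle to be the bookkeeping in the two inclusions above, specifically controlling the full-list ordering when an optional corner is stacked in the same column as a genuine corner (the case $p_{i}=p_{i+1}$, where $\ttau$ genuinely may contain two corners sharing a column). There one must check that inserting the optional corners into the ordered NE list does not accidentally turn a strict genuine inequality into an equality, and conversely that each optional corner sits exactly at an equality index. Handling these interleavings carefully --- using Equation \eqref{eq:optional}, the telescoping computation of Proposition \ref{prop:svexcore}, and Lemma \ref{lema:properties2} to locate the corner below and the reflected corner to the right --- is where the real work lies, while the reduction to intrinsic data is purely formal.
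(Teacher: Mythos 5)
Your proposal is correct in strategy, but it takes a genuinely different route from the paper's. The paper argues by contradiction: given two distinct theta-triples $\ttau,\tilde\ttau$ with $w(\ttau)=w(\tilde\ttau)=w$, it picks a corner in $\mathcal{O}p_{\ttau}\cap\tilde\ttau$ and shows --- precisely because the equality \eqref{eq:optional} coming from optionality with respect to $\ttau$ is incompatible with the strict condition B2 that $\tilde\ttau$ would have to satisfy for two of its corners stacked in one column --- that the $\ttau$-corner sharing that column must lie in $\mathcal{O}p_{\tilde\ttau}\cap\ttau$; iterating produces an infinite strictly monotone chain of corners in a single column, impossible in a finite set. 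You instead characterize $\op(w)$ intrinsically, as the corners of $\NE(w)$ sitting at the B2-equality indices $I$ of the full NE list, so that $\ttau=\NE(w)\setminus\op(w)$ is recovered canonically from $w$ alone. Both arguments hinge on exactly the same dichotomy --- equality \eqref{eq:optional} for optional corners versus strict B2/B3 for members of $\ttau$ --- but the paper deploys it locally, for a single pair of stacked corners inside a contradiction, while you deploy it globally. What your version buys: it is constructive, and it makes the Remark following Proposition \ref{prop:svexcore2} rigorous (as stated there that recipe is circular, since ``optional'' is defined relative to a triple). What it costs is the bookkeeping you flag, plus one point you should make explicit: Equation \eqref{eq:optional} is phrased with the ranks and the $R$-function of $\ttau$, whereas your index set $I$ uses the $R$-function computed on the full list $\NE(w)$; these agree because optional corners occur only at non-positive $q$-values, so for any relevant index the corner lying just above a positive value $-q_{i}$ in the full list is necessarily a genuine corner of $\ttau$, and the two $R$-computations return the same ranks. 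With that reconciliation (which, incidentally, the paper's own proof also needs but passes over silently), your two inclusions --- optional $\Rightarrow$ equality, via \eqref{eq:optional} and, for stacked optional corners, differencing two instances of \eqref{eq:optional}; genuine $\Rightarrow$ strict, via your telescoping against B2/B3 --- do go through, and uniqueness follows as you say.
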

\begin{proof}
Suppose that $\ttau$ and $\tilde\ttau$ are two different theta-triples such that $w=w(\ttau)=w(\tilde\ttau)$. Then, $\ttau\dot{\cup}\op(w)=\NE(w)=\tilde{\ttau}\dot{\cup} \mathcal{O}p_{\tilde\ttau}$. If there is a corner position $(p,q_{1})\in \mathcal{O}p_{\ttau}\cap \tilde{\ttau}$ then there is $q_{2}>q_{1}$ such that $(p,q_{2})\in \ttau$ is a corner position immediately above it. Notice that $(p,q_{2})$ does not belong to $\tilde\ttau$, otherwise condition B2 of $\tilde{\ttau}$ for both corners would contradict Equation \eqref{eq:optional} for the optional corner $(p,q_{1})$. Then, $(p,q_{2}) \in \mathcal{O}p_{\tilde\ttau}\cap {\ttau}$. For the same reason, there is $q_{3}>q_{2}$ such that $(p,q_{3})\in \mathcal{O}p_{\ttau}\cap \tilde{\ttau}$, and keep going. Hence, this process should be repeated forever, which is impossible since the sets are finite. Therefore, $\mathcal{O}p_{\ttau}\cap \tilde{\ttau}=\emptyset$, and by the same reason $\mathcal{O}p_{\tilde\ttau}\cap {\ttau}=\emptyset$, which implies that $\ttau=\tilde{\ttau}$.
\end{proof}

\section{Pattern avoidance}

Recall that given a signed pattern $\pi=\pi(1)\ \pi(2)\cdots \pi(m)$ in $\weyl_{m}$, a signed permutation $w$ \emph{contains} $\pi$ if there is a subsequence $w(i_{1})\cdots w(i_{m})$ such that the signs of $w(i_{j})$ and $\pi(j)$ are the same for all $j$, and also the absolute values of the subsequence are in the same relative order as the absolute values of $\pi$. Otherwise $w$ \emph{avoids} $\pi$.

\begin{prop}\label{prop:patavoid}
A signed permutation $w$ is theta-vexillary if and only if $w$ avoids the follow thirteen signed patterns
$[\overline{1}\ 3\ 2]$, 
$[\overline{2}\ 3\ 1]$, 
$[\overline{3}\ 2\ 1]$, 
$[\overline{3}\ 2\ \overline{1}]$, 
$[2\ 1\ 4\ 3]$, 
$[2\ \overline{3}\ 4\ \overline{1}]$, 
$[\overline{2}\ \overline{3}\ 4\ \overline{1}]$, 
$[3\ \overline{4}\ 1\ \overline{2}]$,
$[3\ \overline{4}\ \overline{1}\ \overline{2}]$,
$[\overline{3}\ \overline{4}\ 1\ \overline{2}]$, 
$[\overline{3}\ \overline{4}\ \overline{1}\ \overline{2}]$, 
$[\overline{4}\ 1\ \overline{2}\ 3]$,
and $[\overline{4}\ \overline{1}\ \overline{2}\ 3]$.
\end{prop}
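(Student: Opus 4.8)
The plan is to deduce this from the corner characterization already proved in Proposition~\ref{prop:svexcore2}, which identifies theta-vexillariness with the decomposition $\cnr(w)=\NE(w)\dot{\cup}\uness(w)$. Since $\uness(w)$ consists only of non-minimal corners, the two sets are automatically disjoint, so the decomposition holds if and only if \emph{every} non-minimal corner of $\cnr(w)$ is unessential. It therefore suffices to prove the equivalence ``the decomposition fails $\iff$ $w$ contains one of the thirteen patterns.'' I call a corner \emph{bad} when it is non-minimal in the poset ``$<$'' but not unessential, so the task reduces to showing that $w$ has a bad corner precisely when it contains a listed pattern.

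First I would make the notion of bad corner concrete. Fix a non-minimal corner $(p,q)$; since the third, non-minimality clause of the definition of unessential is then available, negating that definition leaves exactly two ways to fail: either there is no corner $(p,q_{1})\in\NE(w)$ lying above it in the same column with $q_{1}<q<0$, or there is no corner $(p_{2},q_{2})\in\NE(w)$ with $p_{2}>0$ and $q_{2}=\overline{q}+1$ whose reflection sits to its right in the same row. In particular any non-minimal corner with $q\geqslant 0$ is automatically bad, while a bad corner with $q<0$ must lack one of the two witnessing corners. Using the symmetry of Lemma~\ref{lema:symmetry} I may always place the bad corner among the first $n$ columns, which reduces the analysis to the three cases just described.

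For the implication ``$w$ not theta-vexillary $\Rightarrow$ $w$ contains a pattern'', I would start from a bad corner $(q-1,\overline{p})$ and read off the dots certifying its defects. Non-minimality supplies a corner $(p_{3},q_{3})<(p,q)$, hence dots strictly to the lower left; the defining descents of the corner in $w$ and in $w^{-1}$ supply a dot above-left and a dot to the lower right; and the absence of the missing witness forces a further dot in a prescribed row or column. Translating each such configuration of three or four dots back through the embedding $\iota$ into a signed subsequence of $w$, organised by the sign of $q$ and by which witness is missing, should yield in every case one of the thirteen patterns: I expect the four length-three patterns $[\overline{1}\ 3\ 2]$, $[\overline{2}\ 3\ 1]$, $[\overline{3}\ 2\ 1]$, $[\overline{3}\ 2\ \overline{1}]$ to emerge from the degenerate cases where a witnessing corner is structurally impossible (in particular near column $\overline{1}$, where $w(0)=0$ obstructs a witness), and the nine length-four patterns from the generic cross-descent configurations, mirroring the shape analysis of Proposition~\ref{prop:svexcore3}.

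For the converse I would first verify directly, in its own small diagram, that each of the thirteen permutations carries a bad corner, i.e.\ fails the decomposition of Proposition~\ref{prop:svexcore2}; this is a finite check. It then remains to show that such a bad configuration is inherited by any $w$ containing the pattern: the prescribed signs and relative order of the pattern entries already produce the relevant descents of $w$ and of $w^{-1}$, and the failure of the unessential clause depends only on order relations among these entries, so a bad corner persists. I expect the main obstacle to be precisely this inheritance step together with the case bookkeeping of the forward direction—one must ensure that the local dots of an embedded pattern still form a genuine SE corner, i.e.\ a descent between consecutive positions as in \eqref{eq:defconerSn}, and that no ambient dot repairs the missing witness. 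Lemmas~\ref{lema:properties} and~\ref{lema:properties2} together with the reflection symmetry are the tools I would use to keep this finite case analysis under control.
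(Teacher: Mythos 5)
Your reduction to Proposition \ref{prop:svexcore2} and your reformulation via ``bad'' corners (non-minimal but not unessential) is exactly the paper's frame, and your sketch of the direction ``bad corner $\Rightarrow$ forbidden pattern'' follows the same route as the paper: a case analysis on where the bad corner and a dominating corner sit (the paper organizes it by quadrants $\mathbf{A}$/$\mathbf{B}$, producing the three-letter patterns when the dominated corner has $q>0$ and the four-letter patterns otherwise, via a table of all orderings of the relevant entries). You leave that bookkeeping unexecuted, but the plan is sound and coincides with the paper's. The genuine gap is in your other direction, ``pattern contained $\Rightarrow$ bad corner exists.'' Your strategy --- verify each of the thirteen patterns carries a bad corner in its own small diagram, then argue the bad configuration is inherited by any $w$ containing the pattern --- does not go through as stated, because badness is not a property that transfers from the pattern's diagram to $w$'s diagram: unessentiality is a \emph{global} condition, requiring witness corners $(p_{1},q_{1})$, $(p_{2},q_{2})$, $(p_{3},q_{3})$ in the NE path of the whole diagram of $w$. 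An occurrence of a pattern forces SE corners of $w$ to exist in certain regions, but it cannot prevent ambient corners of $w$, invisible to the pattern, from supplying precisely the witnesses that make the forced corner unessential. You name this obstacle yourself (``no ambient dot repairs the missing witness'') but offer no mechanism to overcome it, and that mechanism is the actual content of this half of the proof.

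The pattern $[2\ 1\ 4\ 3]$ shows the failure concretely. All four entries are positive, so the dots of an occurrence lie in the upper half of the extended diagram and the corner $T$ forced by the ``$2\ 1$'' descent has $q<0$; it is non-minimal (dominated by the corner forced by ``$4\ 3$''), but nothing in the local configuration excludes a column witness above $T$ and a reflected row witness to its right, both in $\NE(w)$, which would make $T$ unessential and the decomposition of Proposition \ref{prop:svexcore2} intact. This is exactly why the paper, for this one pattern, does not argue diagrammatically at all: it assumes $w=w(\ttau)$ is theta-vexillary and derives a contradiction from the construction algorithm --- the dots in columns $\overline{b}$ and $\overline{a}$ would have to be placed by distinct Steps $(i)<(j)$, yet Step $(i)$ cannot place the value $w(b)$ while skipping the larger value $w(a)$ that a later step is supposed to place at an earlier position. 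Some argument of this global kind (via the theta-triple construction, or a quantitative use of Lemmas \ref{lema:properties} and \ref{lema:properties2}) is indispensable to your inheritance step; the finite check on the thirteen patterns by itself establishes nothing about an arbitrary $w$ containing them.
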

\begin{proof}
We know, by Proposition \ref{prop:svexcore}, how to describe a theta-vexillary permutation by the SE corners of the extended diagram.

Assume that $w$ is a theta-vexillary signed permutation. To prove that it avoids all these 13 patterns, we will assume if one of these patterns is contained in $w$, then show that there is a SE corner $T$ such that $T\not\in\NE(w)\cup\uness(w)$.

Suppose that $w$ contains $[2\ 1\ 4\ 3]$ as a subsequence $(w(a)\ w(b)\ w(c)\ w(d))$ satisfying $w(b)<w(a)<w(d)<w(c)$ for some $a<b<c<d$ as in Figure \ref{fig:patternsavoid2143} (left). Then, there is at least one SE corner in each shaded area, which will be denoted by $T$ and $T'$. Clearly $T\not\in\NE(w)$ and, by Proposition \ref{prop:svexcore2}, it should be an unessential corner. Since $\ttau$ is a theta-triple of $w$, there are $i<j$ such that Step $(i)$ places the dot in the column $\overline{b}$ and Step $(j)$ place the dot in the column $\overline{a}$. However it lead us to a contradiction because we cannot place a dot in the row $w(\overline{b})$ during Step $(i)$ and skip the row $w(\overline{a})$ since it will be place further.

\begin{figure}[ht]
	\centering
	\includegraphics[scale=0.8]{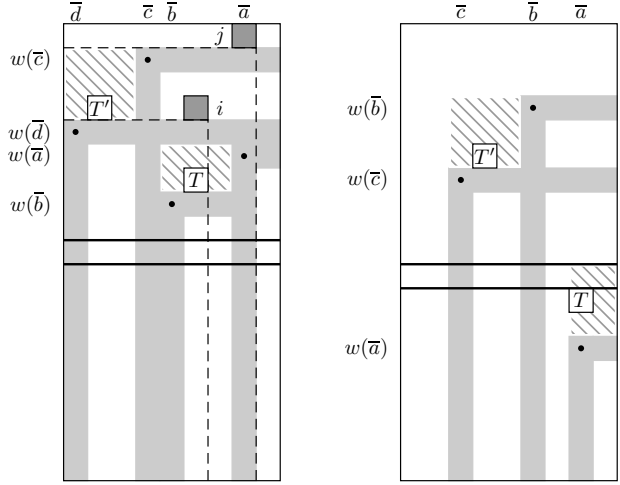}
	\caption{Situation where $w$ contains: $[2\ 1\ 4\ 3]$ in the left; and $[\overline{1}\ 3\ 2]$ in the right.}
	\label{fig:patternsavoid2143}
\end{figure}

Now, suppose that $w$ contains $[\overline{1}\ 3\ 2]$ as a subsequence $(w(a)\ w(b)\ w(c))$ satisfying $\overline{w(a)}<w(c)<w(b)$ for some $a<b<c$ as in Figure \ref{fig:patternsavoid2143} (right). Then, there is at least one SE corner in each shaded area, which will be denoted by $T$ and $T'$. By definition, $T$ is neither an unessential corner nor belongs to the NE path, i.e, $T\not\in \NE(w)\cup\uness(w)$, which contradicts Proposition \ref{prop:svexcore2}.

Notice that we could consider the diagram of $w$ restricted to the columns $\overline{a},\overline{b},\overline{c}$ and rows $0,\pm w(\overline{a}),\pm w(\overline{b}),\pm w(\overline{c})$. Then, the corners $T$ and $T'$ in such restriction can be easily represented as the first diagram of Figure \ref{fig:patternsavoid}. Clearly, such configuration tell us that $T$ is neither unessential nor minimal. The same idea can be used to prove that the remaining eleven patterns in Figure \ref{fig:patternsavoid} should be avoided.

\begin{figure}[ht]
	\centering
	\includegraphics[scale=0.8]{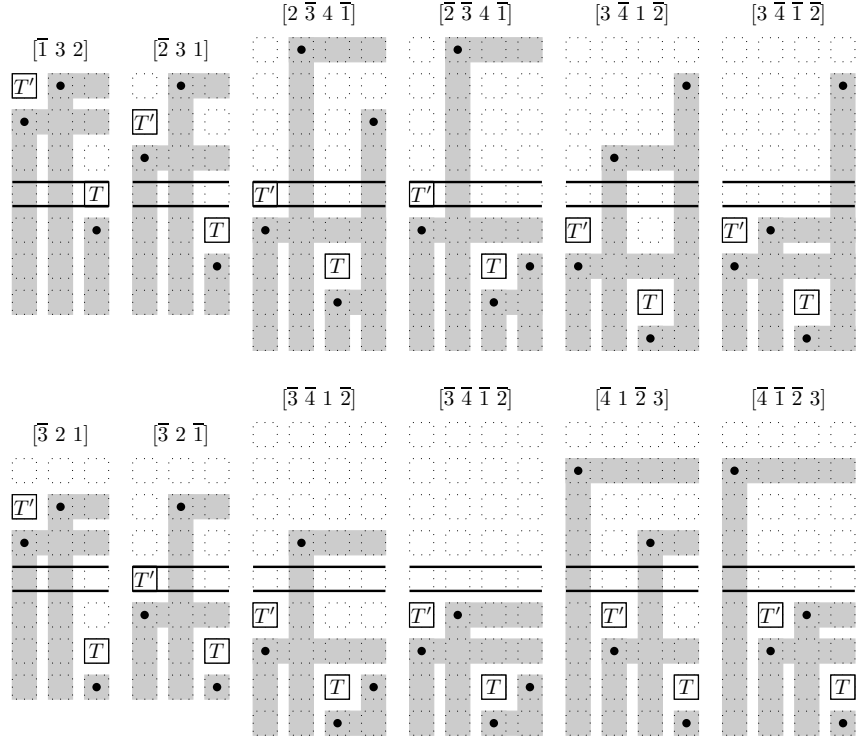}
	\caption{Diagram of $w$ restricted to 11 different patterns.}
	\label{fig:patternsavoid}
\end{figure}

Now, let us assume that $w$ is permutation that avoids all the thirteen patterns listed above. We are going to prove that $\cnr(w)=\NE(w)\cup\uness(w)$, and hence, $w$ is a theta-vexillary permutation.

Suppose that there are corners $T=(p,q)$ and $T'=(p',q')$ such that $q>0>q'$ and $p'>p>0$, i.e., $T$ is in quadrant \textbf{A}, $T'$ is in quadrant \textbf{B}, and $T'<T$.
If we denote $a:={p}$, $b:={p'}-1$ and $c:=w^{-1}(\overline{q'})$, then they satisfy $0<a<b<c$ and $w(a)<0<w(c)<w(b)$. Observe that $\overline{a},\overline{b},\overline{c}$ are the columns of the dots in Figure \ref{pic:3to2_1}.

\begin{figure}[ht]
	\centering
		\includegraphics[scale=0.8]{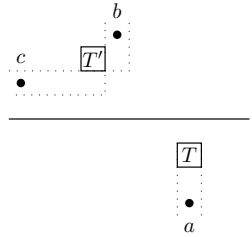}
	\caption{Sketch for the case where $T$ is in quadrant \textbf{A}, $T'$ is in quadrant \textbf{B}, and $T'<T$.}
	\label{pic:3to2_1}
\end{figure}

In order to relate the subsequence $(w(a)\ w(b)\ w(c))$ of $w$ to some 3-pattern $\pi$, we need describe all possible orderings of $\overline{w(a)},w(b)$ and $w(c)$.
\begin{itemize}
\item If ${\overline{w(a)}}<w(c)<w(b)$ then $\pi=[\overline{1}\ 3\ 2]$;
\item If $w(c)<{\overline{w(a)}}<w(b)$ then $\pi=[\overline{2}\ 3\ 1]$;
\item If $w(c)<w(b)<{\overline{w(a)}}$ then $\pi=[\overline{3}\ 2\ 1]$.
\end{itemize}

By hypothesis, the pattern in each case should be avoid. Hence the configuration in Figure \ref{pic:3to2_1} is impossible.

Now, suppose that there are corners $T=(p,q)$ and $T'=(p',q')$ such that $q>q'>0$ and $p'>p>0$, i.e., both $T$ and $T'$ are in quadrant \textbf{A} and $T'<T$. Denote $i:=w^{-1}(\overline{q}+1)$, $a={p}$, $b={p'}-1$ and $c=w^{-1}(\overline{q'})$.

If $i>0$, then they satisfy $0<i<a<b<c$ and $w(a)<w(i)<w(c)<w(b)$. Observe that $\overline{\imath},\overline{a},\overline{b},\overline{c}$ are the columns of the dots in Figure \ref{pic:3to2_2} (left).

\begin{figure}[ht]
	\centering
    \includegraphics[scale=0.8]{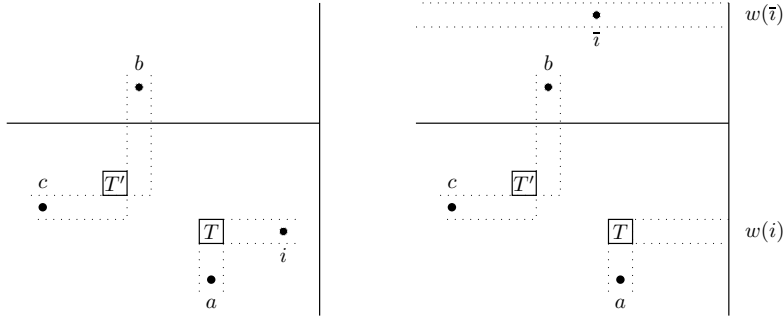}
	\caption{Sketch for the case where both $T$ and $T'$ are in quadrant \textbf{A}, and $T'<T$.}
	\label{pic:3to2_2}
\end{figure}

In order to relate the subsequence $(w(i)\ w(a)\ w(b)\ w(c))$ of $w$ to some 4-pattern $\pi$, we need to describe all possible orderings of $\overline{w(i)}, \overline{w(a)}, \overline{w(c)}$ and $\pm w(b)$.
\begin{itemize}
\item If ${\overline{w(b)}}<\overline{w(c)}<\overline{w(i)}<\overline{w(a)}$ then $\pi=[\overline{3}\ \overline{4}\ \overline{1}\ \overline{2}]$;

\item If ${w(b)}<\overline{w(c)}<\overline{w(i)}<\overline{w(a)}$ then $\pi=[\overline{3}\ \overline{4}\ 1\ \overline{2}]$;

\item If $\overline{w(c)}<{w(b)}<\overline{w(i)}<\overline{w(a)}$ then $\pi=[\overline{3}\ \mathbf{\overline{4}\ 2\ \overline{1}}]$;

\item If $\overline{w(c)}<\overline{w(i)}<{w(b)}<\overline{w(a)}$ then $\pi=[\overline{2}\ \mathbf{\overline{4}\ 3\ \overline{1}}]$;

\item If $\overline{w(c)}<\overline{w(i)}<\overline{w(a)}<{w(b)}$ then $\pi=[\overline{2}\ \overline{3}\ 4\ \overline{1}]$.
\end{itemize}

By hypothesis, the pattern in each case should be avoided (in some cases, the highlighted parts are avoided by $[\overline{3}\ 2\ \overline{1}]$). Hence this configuration is impossible.

If $i<0$ then we have four possibilities to place $\overline{\imath}>0$ in the sequence $0<a<b<c$. Observe that $i,\overline{a},\overline{b},\overline{c}$ are the columns of the dots in Figure \ref{pic:3to2_2} (right). Table \ref{tab:possiblearreng} combines all these possibilities along with all possible orderings of $w(\overline{\imath}), \overline{w(a)}, \overline{w(c)}$ and $\pm w(b)$, in order to get the respective 4-pattern $\pi$ relative to the correspondent subsequence of $w$.

\begin{table}[ht]
\caption{Combinations to get the respective 4-pattern of the subsequence of $w$.}
\label{tab:possiblearreng}
\renewcommand{\arraystretch}{1.3}
\begin{center}
\footnotesize{
\begin{tabular}{|c|c|c|c|c|}
\hline 
 & $\overline{\imath}\sma a\sma b \sma c$ & $a\sma \overline{\imath}\sma b\sma c$ & $a\sma b\sma \overline{\imath}\sma c$ & $a\sma b\sma c\sma {\overline{\imath}}$ \\ 
\hline 
${\overline{w(b)}}\sma \overline{w(c)}\sma {w(\overline{\imath})}\sma \overline{w(a)}$ & $[3\ \overline{4}\ \overline{1}\ \overline{2}]$ &
$[\mathbf{\overline{4}\ 3\ \overline{1}}\ \overline{2}]$ & $[\mathbf{\overline{4}}\ \overline{1}\ \mathbf{3\ \overline{2}}]$ & 
$[\overline{4}\ \overline{1}\ \overline{2}\ 3]$ \\ 
\hline 
${{w(b)}}\sma \overline{w(c)}\sma {w(\overline{\imath})}\sma \overline{w(a)}$ & $[3\ \overline{4}\ 1\ \overline{2}]$ & 
$[\mathbf{\overline{4}\ 3}\ 1\ \mathbf{\overline{2}}]$ & $[\mathbf{\overline{4}}\ 1\ \mathbf{3\ \overline{2}}]$ & 
$[\overline{4}\ 1\ \overline{2}\ 3]$ \\ 
\hline 
$\overline{w(c)}\sma{{w(b)}}\sma  {w(\overline{\imath})}\sma \overline{w(a)}$ & $[3\ \mathbf{\overline{4}\ 2\ \overline{1}}]$ & 
$[\mathbf{\overline{4}\ 3}\ 2\ \mathbf{\overline{1}}]$ & $[\mathbf{\overline{4}\ 2}\ 3\ \mathbf{\overline{1}}]$ & 
$[\mathbf{\overline{4}\ 2\ \overline{1}}\ 3]$ \\ 
\hline 
$\overline{w(c)}\sma {w(\overline{\imath})}\sma{{w(b)}}\sma  \overline{w(a)}$ & $[2\ \mathbf{\overline{4}\ 3\ \overline{1}}]$ & 
$[\mathbf{\overline{4}\ 2}\ 3\ \mathbf{\overline{1}}]$ & $[\mathbf{\overline{4}\ 3}\ 2\ \mathbf{\overline{1}}]$  & 
$[\mathbf{\overline{4}\ 3\ \overline{1}}\ 2]$ \\ 
\hline 
$\overline{w(c)}\sma {w(\overline{\imath})}\sma \overline{w(a)}\sma{{w(b)}}$ & $[2\ \overline{3}\ 4\ \overline{1}]$ & 
$[\mathbf{\overline{3}\ 2}\ 4\ \mathbf{\overline{1}}]$ & $[3\ \mathbf{\overline{4}\ 2\ \overline{1}}]$ & 
$[\mathbf{\overline{3}\ 4}\ \overline{1}\ \mathbf{2}]$ \\ 
\hline
\end{tabular}}
\end{center}
\end{table}

By hypothesis, the pattern in each case should be avoid and, hence, this case is impossible.

Finally, suppose that there are corners $T=(p,q)$ and $T'=(p',q')$ such that $0>q>q'$ and $p'>p>0$, i.e., both $T$ and $T'$ are in quadrant \textbf{B}, and $T'<T$.
If we denote $i=w^{-1}(\overline{q}+1)$, $a={p}-1$, $b={p}$, $c={p'}-1$ and $d=w^{-1}(\overline{q'})$, then they satisfy $i<a<b<c<d$, $w(b)<w(a)$ and $w(b)<w(i)<w(d)<w(c)$. Observe that $\overline{\imath},\overline{a},\overline{b},\overline{c}$ are the columns of the dots in Figure \ref{pic:3to2_3} (left).

\begin{figure}[ht]
	\centering
		\includegraphics[scale=0.8]{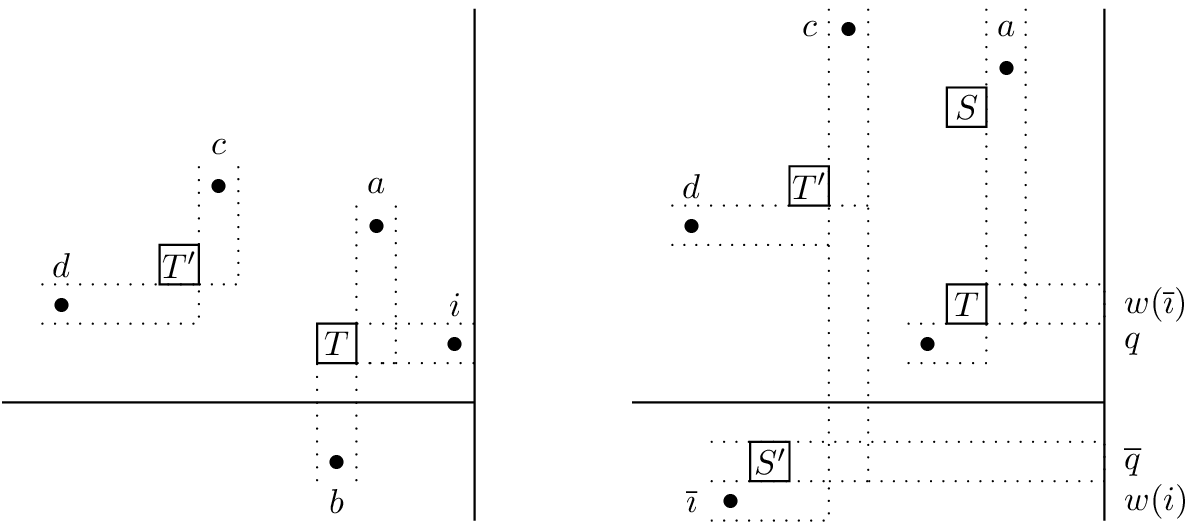}
	\caption{Sketch for the case where both $T$ and $T'$ are in quadrant \textbf{B}, and $T'<T$.}
	\label{pic:3to2_3}
\end{figure}

Consider the following situations:
\begin{itemize}
\item If ${w(b)}<0$ then the subsequence $(w(b)\ w(c)\ w(d))$ of $w$ is a 3-pattern $\pi$ equal to $[\overline{1}\ 3\ 2]$, $[\overline{2}\ 3\ 1]$ or $[\overline{3}\ 2\ 1]$, which is impossible;

\item If $0<w(b)<{w(a)}<w(d)<w(c)$ then  $(w(a)\ w(b)\ w(c)\ w(d))$ is a 4-pattern $\pi=[2\ 1\ 4\ 3]$ and also should be avoided;

\item If $i>0$ and $w(b)>0$ then the subsequence $(w(i)\ w(b)\ w(c)\ w(d))$ is a 4-pattern $\pi=[2\ 1\ 4\ 3]$ and also should be avoided;

\item If $0>i>\overline{c}$ and ${w(b)}>0$ then the subsequence $(w(\overline{\imath})\ w(c)\ w(d))$ of $w$ is a 3-pattern $\pi$ equal to $[\overline{1}\ 3\ 2]$, $[\overline{2}\ 3\ 1]$ or $[\overline{3}\ 2\ 1]$, which is impossible;

\item If $i<\overline{c}$ and $0<w(b)<w(d)<{w(a)}$ then clearly there are SE corners $S$ and $S'$ as in Figure \ref{pic:3to2_3} (right). 
Therefore, such construction implies that $T$ is an unessential box. \qedhere
\end{itemize}
\end{proof}

Therefore, Propositions \ref{prop:svexcore2} and \ref{prop:patavoid} prove Theorem \ref{thm:main}.

This pattern avoidance criterion allow us to compare a theta-vexillary permutation with a vexillary permutation of type B defined by Billey and Lam \cite{BL}. According to them, a signed permutation is vexillary of type B if the Stanley symmetric function $F_{w}$ is equals to the Schur Q-function $Q_{\lambda}$ for some shape $\lambda \vdash \ell(w)$ with distinct parts. In \cite[Theorem 7]{BL}, they proved that a vexillary permutation of type B should avoid eighteen patterns, which includes all thirteen patterns given in our main theorem. Then, every vexillary permutation of type B is theta-vexillary.

Therefore, for an arbitrary theta-vexillary signed permutation $w$, we cannot say that the Stanley symmetric function $F_{w}$ is equals to the Schur Q-function of $w$. In principle, this property is not expected since the equality holds for every vexillary signed permutation as proved by \cite[Corollary 5.2]{AF14}.

\bibliographystyle{abbrv}
\bibliography{biblio}

\begin{thebibliography}{10}

\bibitem{AF12}
D.~Anderson and W.~Fulton.
\newblock {Degeneracy loci, Pfaffians, and vexillary signed permutations in
  types B, C, and D}.
\newblock {\em ArXiv e-prints}, 2012.
\newblock \texttt{arXiv:1210.2066}.

\bibitem{AF15}
D.~Anderson and W.~Fulton.
\newblock Chern class formulas for classical-type degeneracy loci.
\newblock {\em ArXiv e-prints}, 2015.
\newblock \texttt{arXiv:1504.03615}.

\bibitem{AF16}
D.~Anderson and W.~Fulton.
\newblock Diagrams and essential sets for signed permutations.
\newblock {\em ArXiv e-prints}, 2016.
\newblock \texttt{arXiv:1612.08670}.

\bibitem{AF14}
D.~Anderson and W.~Fulton.
\newblock Vexillary signed permutations revisited.
\newblock {\em ArXiv e-prints}, 2018.
\newblock \texttt{arXiv:1806.01230}.

\bibitem{BL}
S.~Billey and T.~K. Lam.
\newblock {Vexillary elements in the hyperoctahedral group}.
\newblock {\em J. Algebraic Combin.}, 8:139--152, 1998.

\bibitem{BB}
A.~Björner and F.~Brenti.
\newblock {\em Combinatorics of Coxeter groups}.
\newblock Springer-Verlag, 2005.

\bibitem{BKT}
A.~S. Buch, A.~Kresch, and H.~Tamvakis.
\newblock {A Giambelli formula for isotropic Grassmannians}.
\newblock {\em Selecta Mathematica}, 23:869--914, 2017.

\bibitem{Fu91}
W.~Fulton.
\newblock {Flags, Schubert polynomials, degeneracy loci, and determinantal
  formulas}.
\newblock {\em Duke Math. J.}, 65:381--420, 1991.

\bibitem{Lam18}
J.~Lambert.
\newblock {\em Combinatorics on Schubert varieties}.
\newblock PhD thesis, Universidade Estadual de Campinas, 2017.

\bibitem{LS}
A.~Lascoux and M.-P. Schützenberger.
\newblock {Polynômes de Schubert}.
\newblock {\em C.R. Acad. Sci. Paris Sér. I Math}, 294:447--450, 1982.

\bibitem{patternDB}
B.~Tenner.
\newblock Database of permutation pattern avoidance.
\newblock URL: http://math.depaul.edu /bridget/patterns.html.

\end{thebibliography}

\end{document}